\title{Spherical Witt vectors and integral models for spaces}
\author{Benjamin Antieau}
\date{\today}
\setlist{noitemsep}
\DeclareSymbolFontAlphabet{\mathbb}{AMSb} 
\DeclareSymbolFontAlphabet{\mathbbl}{bbold}
\definecolor{todo}{rgb}{1,0,0}
\definecolor{conditional}{rgb}{0,1,0}
\definecolor{e-mail}{rgb}{0,.40,.80}
\definecolor{reference}{rgb}{.20,.60,.22}
\definecolor{mrnumber}{rgb}{.80,.40,0}
\definecolor{citation}{rgb}{0,.40,.80}
\let\oldmarginpar\marginpar
\renewcommand\marginpar[1]{\-\oldmarginpar[\raggedleft\footnotesize #1]%
{\raggedright\footnotesize #1}}
\newcommand{\Cscr}{\mathcal{C}}
\newcommand{\Dscr}{\mathcal{D}}
\newcommand{\Fscr}{\mathcal{F}}
\newcommand{\Oscr}{\mathcal{O}}
\newcommand{\Pscr}{\mathcal{P}}
\newcommand{\Sscr}{\mathcal{S}}
\newcommand{\B}{\mathrm{B}}
\renewcommand{\c}{\mathrm{c}}
\newcommand{\D}{\mathrm{D}}
\newcommand{\F}{\mathrm{F}}
\renewcommand{\H}{\mathrm{H}}
\newcommand{\h}{\mathrm{h}}
\renewcommand{\L}{\mathrm{L}}
\newcommand{\R}{\mathrm{R}}
\newcommand{\T}{\mathrm{T}}
\renewcommand{\t}{\mathrm{t}}
\newcommand{\V}{\mathrm{V}}
\newcommand{\bA}{\mathbf{A}}
\newcommand{\bE}{\mathbf{E}}
\newcommand{\bF}{\mathbf{F}}
\newcommand{\bG}{\mathbf{G}}
\newcommand{\bH}{\mathbf{H}}
\newcommand{\bN}{\mathbf{N}}
\newcommand{\bQ}{\mathbf{Q}}
\newcommand{\bS}{\mathbf{S}}
\newcommand{\bW}{\mathbf{W}}
\newcommand{\bZ}{\mathbf{Z}}
\renewcommand{\mathds}{\mathbbl}
\newcommand{\WW}{\mathds{W}}
\newcommand{\mfrak}{\mathfrak{m}}
\newcommand{\Cat}{\Cscr\mathrm{at}}
\newcommand{\Fin}{\Fscr\mathrm{in}}
\newcommand{\Sp}{\Sscr\mathrm{p}}
\newcommand{\op}{\mathrm{op}}
\newcommand{\cofib}{\mathrm{cofib}}
\newcommand{\fib}{\mathrm{fib}}
\newcommand{\cn}{\mathrm{cn}}
\newcommand{\bWhat}{\widehat{\bW}}
\newcommand{\Mod}{\mathrm{Mod}}
\newcommand{\Perf}{\mathrm{Perf}}
\newcommand{\Ind}{\mathrm{Ind}}
\newcommand{\Pro}{\mathrm{Pro}}
\newcommand{\CAlg}{\mathrm{CAlg}}
\newcommand{\DAlg}{\mathrm{DAlg}}
\newcommand{\Gr}{\mathrm{Gr}}
\newcommand{\gr}{\mathrm{gr}}
\newcommand{\ins}{\mathrm{ins}}
\newcommand{\LSym}{\mathrm{LSym}}
\newcommand{\Sym}{\mathrm{Sym}}
\newcommand{\nlRL}{\mathrm{nlRL}}
\newcommand{\rcolon}{\,{:}\!\,}
\newcommand{\Rees}{\mathrm{Rees}}
\newcommand{\FD}{\mathrm{FD}}
\newcommand{\FDhat}{\widehat{\FD}}
\newcommand{\Stone}{\mathrm{Stone}}
\newcommand{\un}{\mathrm{un}}
\newcommand{\nil}{\mathrm{nil}}
\newcommand{\ft}{\mathrm{ft}}
\newcommand{\fin}{\mathrm{fin}}
\newcommand{\ccn}{\mathrm{ccn}}
\newcommand{\bFbar}{\overline{\bF}}
\newcommand{\Poly}{\mathrm{Poly}}
\newcommand{\Vect}{\mathrm{Vect}}
\newcommand{\perf}{\mathrm{perf}}
\newcommand{\syn}{\mathrm{syn}}
\newcommand{\ev}{\mathrm{ev}}
\newcommand{\heart}{\heartsuit}
\newcommand{\id}{\mathrm{id}}
\renewcommand{\geq}{\geqslant}
\renewcommand{\leq}{\leqslant}
\newcommand{\TC}{\mathrm{TC}}
\newcommand{\dR}{\mathrm{dR}}
\newcommand{\MU}{\mathrm{MU}}
\newcommand{\Map}{\mathrm{Map}}
\newcommand{\Hom}{\mathrm{Hom}}
\newcommand{\Fun}{\mathrm{Fun}}
\newcommand{\pic}{\mathrm{pic}}
\newcommand{\Pic}{\mathrm{Pic}}
\newcommand{\Gm}{\bG_{m}}
\DeclareMathOperator*{\colim}{colim}
\DeclareMathOperator*{\Tot}{Tot}
\newcommand{\et}{\mathrm{\acute{e}t}}
\newcommand{\PreStk}{\mathrm{PreStk}}
\newcommand{\Stk}{\mathrm{Stk}}
\DeclareMathOperator{\Spec}{Spec}
\newcommand{\red}{\mathrm{red}}
\newcommand{\we}{\simeq}
\newcommand{\iso}{\cong}
\theoremstyle{plain}
\newtheorem{theorem}{Theorem}[section]
\newtheorem*{theorem*}{Theorem}
\newtheorem{lemma}[theorem]{Lemma}
\newtheorem{proposition}[theorem]{Proposition}
\newtheorem{conjecture}[theorem]{Conjecture}
\newtheorem{corollary}[theorem]{Corollary}
\newtheorem*{corollary*}{Corollary}
\theoremstyle{plain}
\theoremstyle{definition}
\newtheoremstyle{named}{}{}{\itshape}{}{\bfseries}{.}{.5em}{#1 \thmnote{#3}}
\theoremstyle{named}
\theoremstyle{definition}
\newtheorem{definition}[theorem]{Definition}
\newtheorem{warning}[theorem]{Warning}
\newtheorem{variant}[theorem]{Variant}
\newtheorem{notation}[theorem]{Notation}
\newtheorem{example}[theorem]{Example}
\newtheorem*{example*}{Example}
\newtheorem*{question*}{Question}
\newtheorem{construction}[theorem]{Construction}
\newtheorem{remark}[theorem]{Remark}
\begin{document}

\maketitle

\begin{abstract}
    \noindent
    We give a new construction of the spherical Witt vector functor of Lurie and
    Burklund--Schlank--Yuan and extend it to nonconnective objects using synthetic spectra
    and recent work of Holeman. The spherical Witt vectors are used to build spherical versions of
    perfect $\lambda$-rings and to motivate new results in Grothendieck's
    schematization program, building on work of Ekedahl, Kriz, Mandell, Lurie, Quillen, Sullivan, To\"en, and Yuan.
    In particular, there is an $\infty$-category $\DAlg_{\bZ}^{\psi=1}$ of perfect derived $\lambda$-rings over $\bZ$ with
    trivializations of the Adams operations $\psi^p$ for all $p$ such that the functor
    $\Sscr^\op\rightarrow\DAlg_{\bZ}^{\psi=1}$ sending a space $X$ to $\bZ^X$, which models
    $\bZ$-cochains on $X$, is fully faithful on a large class of nilpotent spaces.
    Our theorem is closely related to recent work of Horel and Kubrak--Shuklin--Zakharov.
    Finally, we answer two questions of Yuan on spherical cochains.
\end{abstract}

\tableofcontents

\section{Introduction}

The starting point of this paper is a new proof of the following theorem~\cite[Thm.~2.1]{bsy}.

\begin{theorem}[Burklund--Schlank--Yuan, Lurie,~\ref{prop:homology_of_sw}]\label{thm:intromain}
    There is an adjunction
    $$\bS\bW\colon\CAlg_{\bF_p}^\perf\rightleftarrows\CAlg_{\bS_p}^{\bE_\infty,\wedge}\rcolon(-)^\flat$$
    between the category of perfect $\bF_p$-algebras and the $\infty$-category of $p$-complete $\bE_\infty$-rings where the
    right adjoint is given by $R\mapsto R^\flat=(\pi_0(R)/p)^\perf$ and the left adjoint is given by
    the functor of spherical Witt vectors.
\end{theorem}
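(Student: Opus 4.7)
I would construct $\bS\bW$ as a symmetric monoidal, colimit-preserving functor via a left Kan extension from free perfect $\bF_p$-algebras, and then verify the adjunction by computing the $\bF_p$-homology of $\bS\bW(A)$. Concretely, I would start on the free perfect $\bF_p$-algebra $P=\bF_p[x^{1/p^\infty}]$, which is the filtered colimit of $\bF_p[y_0]\to\bF_p[y_1]\to\cdots$ with transition maps $y_n\mapsto y_{n+1}^p$, and set
$$\bS\bW(P):=\bigl(\colim_n\SS\{y_n\}\bigr)^\wedge_p,$$
where $\SS\{y_n\}$ denotes the free $\bE_\infty$-ring on one generator and the transitions send $y_n\mapsto y_{n+1}^p$. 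Extending by $p$-completed tensor products handles free perfect $\bF_p$-algebras on finitely many generators, and left Kan extension along the inclusion of these into $\CAlg_{\bF_p}^\perf$ defines $\bS\bW$ everywhere. The resulting functor is symmetric monoidal and colimit-preserving, and hence has a right adjoint.

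The technical heart of the argument, labelled \ref{prop:homology_of_sw}, is the identification
$$\bS\bW(A)\otimes_{\SS}\bF_p\simeq A$$
with $A$ placed in homological degree zero. For $A=P$ this reduces to computing the colimit of $\SS\{y_n\}\otimes_\SS\bF_p\simeq\LSym_{\bF_p}(\bF_p\cdot y_n)$ along the $p$-th power transitions. The higher-degree homotopy of $\LSym_{\bF_p}$ is built from Dyer--Lashof operations, and the main obstacle is verifying that iterating the $p$-th power transition map trivializes every positive-degree class, leaving only $\bF_p[x^{1/p^\infty}]$ in $\pi_0$. This is a genuine power-operation computation, and I expect it to be the decisive step; for general perfect $A$ it reduces to the free case via the sifted colimit construction together with the fact that $-\otimes_\SS\bF_p$ preserves colimits.

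With the homology computation in hand, the adjunction follows from a formal mapping space computation. For $R\in\CAlg_{\bS_p}^{\bE_\infty,\wedge}$, one has
$$\Map_{\CAlg_{\bS_p}^{\bE_\infty,\wedge}}(\bS\bW(P),R)\simeq\lim\bigl(\Omega^\infty R\xleftarrow{(-)^p}\Omega^\infty R\xleftarrow{(-)^p}\cdots\bigr).$$
Combining the previous step with $p$-completeness of $R$, one checks that this inverse limit is discrete and computes $\lim_\phi\pi_0(R)/p=R^\flat$, matching $\Hom_{\CAlg_{\bF_p}^\perf}(P,R^\flat)$. Extending by sifted colimits then yields the full adjunction, with right adjoint the tilt $R\mapsto(\pi_0(R)/p)^\perf$.
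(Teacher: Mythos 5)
Your starting object is the same as the paper's: $\bigl(\colim_n\SS\{y_n\}\bigr)_p^\wedge$ with transitions $y_n\mapsto y_{n+1}^p$ is exactly what the paper writes as $\bS\{t^{1/p^\infty}\}_p^\wedge$. Likewise, the formal mapping space identification $\Map(\bS\bW(P),R)\simeq\lim(\Omega^\infty R\xleftarrow{(-)^p}\Omega^\infty R\xleftarrow{(-)^p}\cdots)$ is correct and is exactly what the paper exploits. So you have the right object and the right strategy in outline.

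The gap is that both of the technical steps you flag and then defer are precisely where the paper's real work lives, and you haven't given an argument for either. The discreteness of the inverse limit, which you dismiss with ``one checks,'' is the content of Proposition~\ref{prop:discrete} and is the crux: the paper proves it by showing that the Frobenius $\varphi_p\colon M\to M$ on any $\bE_\infty$-space induces maps on $\pi_i(M,x)$ for $i\geq 1$ that are divisible by $p$ (using simplicity of $M$ to rearrange the $p$-fold diagonal), and hence the inverse limit of a derived $p$-complete tower of such maps vanishes in positive degrees. This is subtle enough that the first version of the paper got it wrong in a related claim (Warning 2.4 / Remark~\ref{rem:pitfalls}): the telescope $M[1/p]$ is \emph{not} the perfection $M_\perf$ in general, and a careless argument with telescopes will fail. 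Your proposal doesn't confront this issue; you'd need to prove that after $p$-completion the distinction evaporates, which is essentially Corollary~\ref{cor:collapse}.

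Your proposed route to the homology identification $\bS\bW(P)\otimes_\SS\bF_p\simeq P$ via a Dyer--Lashof computation is also unnecessary and probably harder than what the paper does. Once Proposition~\ref{prop:discrete} is in hand, Corollary~\ref{cor:collapse} gives $\bS\{t^{1/p^\infty}\}_p^\wedge\simeq\bS[t^{1/p^\infty}]_p^\wedge$, i.e.\ the telescopic free $\bE_\infty$-ring agrees $p$-completely with the spherical monoid ring on the discrete monoid $\bZ[1/p]_{\geq 0}$. For a monoid ring on a flat discrete monoid, the $\bF_p$-homology is immediate and there is no power-operation analysis to do. You could in principle try to trivialize the Dyer--Lashof classes directly in $\colim_n\LSym_{\bF_p}(\bF_p\cdot y_n)$, but you explicitly leave it as ``the decisive step,'' so the proof as written is incomplete at exactly the point where the content is. Finally, for the passage from free to general $A$, it is not enough to invoke ``sifted colimits''; you need to know that perfect $\bF_p$-algebras are closed under colimits inside connective $\bE_\infty$-$\bF_p$-algebras and that $\CAlg_{\bF_p}^\perf$ is the animation of its subcategory of perfected polynomial rings (Lemma~\ref{lem:animated} and the surrounding argument in Proposition~\ref{prop:homology_of_sw}).
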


If $k$ is a perfect $\bF_p$-algebra, then $\bS\bW(k)$ is a $p$-complete flat $\bE_\infty$-ring over
$\bS_p$ with the property that there are natural equivalences
$\bS\bW(k)\otimes_{\bS_p}\bF_p\we k$ and
$(\bS\bW(k)\otimes_{\bS_p}\bZ_p)_p^\wedge\we\bW(k)$, the $p$-typical ring of Witt vectors of $k$. In this sense, the spherical Witt vectors provide a
canonical lift to the sphere spectrum of the rings of $p$-typical Witt vectors of perfect $\bF_p$-algebras.

The approach in~\cite{bsy} to Theorem~\ref{thm:intromain} is to appeal to the deformation theory of
$\bE_\infty$-rings as in~\cite{lurie-elliptic-2}. Our approach is instead via transmutation, which
in this case amounts to constructing a specific $p$-complete $\bE_\infty$-ring, namely
$\bS[t^{1/p^\infty}]_p^\wedge$, which we show corepresents $(-)^\flat$. The existence of the left
adjoint $\bS\bW$ follows from the adjoint functor theorem, and the requisite property of lifting the
spherical Witt vectors is a computation. (Our first ``proof'' of this theorem was incorrect, as
pointed out by Maxime Ramzi and Mura Yakerson; this error is corrected in the present version of the
document.)

After explaining this result, Allen Yuan asked about possible nonconnective extensions. The
remainder of this paper is an exploration of that theme. For this, we will heavily use the theory of
derived commutative rings due to Brantner, Bhatt, Mathew, and Raksit
(see~\cite{brantner-mathew,brantner-campos-nuiten,raksit}).

We give a synthetic extension of the nonconnective Witt vector result which applies to
all $k\in\DAlg_{\bF_p}^\perf$, the $\infty$-category of perfect derived commutative $\bF_p$-algebras.
For these, there is a canonical $p$-complete derived commutative $\bZ_p$-algebra $\bW(k)$ such that
$\bW(k)\otimes_{\bZ_p}\bF_p\we k$ constructed by Holeman in~\cite{holeman-derived}.
For the moment, let $\bS_\syn$ denote the $p$-complete
synthetic sphere spectrum and let $\FDhat(\bS_\syn)_p^\wedge$ denote the $\infty$-category of complete and
$p$-complete synthetic spectra (for which, see~\cite{gikr,pstragowski-synthetic}).
Given a commutative $\bZ_p$-algebra $R$, there is an $\bE_\infty$-algebra $\ins^0 R$, which, as a
filtered spectrum, has $\F^i\we 0$ for $i>0$ and $\F^i\we R$ for $i\leq 0$ with identity transition
maps in non-positive degrees.

\begin{theorem}[Synthetic nonconnective Witt vectors,~\ref{thm:nonconnective}]
    There is a colimit-preserving functor
    $$\bS\bW_\syn\colon\DAlg_{\bF_p}^\perf\rightarrow\CAlg(\FDhat(\bS_{\syn})_p^\wedge)$$
    with the property that there are natural equivalences
    $\bS\bW_\syn(k)\otimes_{\bS_\syn}\ins^0\bF_p\we\ins^0k$ and
    $(\bS\bW_\syn(k)\otimes_{\bS_\syn}\ins^0\bZ_p)_p^\wedge\we\ins^0\bW(k)$ for 
    $k\in\DAlg_{\bF_p}^\perf$.
\end{theorem}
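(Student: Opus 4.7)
The plan is to construct $\bS\bW_\syn$ by left Kan extension from a small generating subcategory of $\DAlg_{\bF_p}^\perf$, following the transmutation strategy of Theorem~\ref{thm:intromain} but in the synthetic setting, and then to verify the base-change identities by reduction to the generators.

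The $\infty$-category $\DAlg_{\bF_p}^\perf$ is generated under sifted colimits by the free perfect derived commutative $\bF_p$-algebras $k_n:=\bF_p[t_1^{1/p^\infty},\ldots,t_n^{1/p^\infty}]$ for $n\geq 0$. Let $\Cscr\subset\DAlg_{\bF_p}^\perf$ denote the full subcategory on these objects. On $\Cscr$, define
$$\bS\bW_\syn(k_n):=\bigl(\bS_\syn[t_1^{1/p^\infty},\ldots,t_n^{1/p^\infty}]\bigr)_p^\wedge\in\CAlg(\FDhat(\bS_\syn)_p^\wedge),$$
the $p$-completed free perfect synthetic commutative $\bS_\syn$-algebra on the indicated generators; functoriality on $\Cscr$ is immediate from the universal property. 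Then left Kan extend along $\Cscr\hookrightarrow\DAlg_{\bF_p}^\perf$; since the target admits arbitrary colimits, this produces a colimit-preserving functor $\bS\bW_\syn$.

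Both sides of each claimed equivalence are colimit-preserving functors of $k$: the left-hand sides by construction together with colimit-preservation of the base-change and $p$-completion operations in $\FDhat(\bS_\syn)_p^\wedge$; the right-hand sides because $\ins^0$ is a left adjoint and because Holeman's derived Witt vector functor $\bW\colon\DAlg_{\bF_p}^\perf\to\DAlg_{\bZ_p}$ preserves colimits. It therefore suffices to verify both equivalences on the generators $k_n\in\Cscr$.

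The main obstacle will be this base case: one must check that
$$\bigl(\bS_\syn[t^{1/p^\infty}]\bigr)_p^\wedge\otimes_{\bS_\syn}\ins^0\bZ_p\we\ins^0\bW(\bF_p[t^{1/p^\infty}]),$$
and the analogous statement after reducing mod $p$. This amounts to showing that $\ins^0$ commutes with the formation of free perfect (synthetic) commutative algebras and with $p$-completed base change along $\bS_\syn\to\ins^0\bS_p$. Granting these compatibilities, Theorem~\ref{thm:intromain} supplies the classical equivalences on underlying $\bE_\infty$-rings, and the synthetic versions then follow from the fact that $\ins^0$ is symmetric monoidal on the relevant subcategory of discrete, flat inputs.
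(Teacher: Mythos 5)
Your proposal founders at the first step. You claim that $\DAlg_{\bF_p}^\perf$ is generated under sifted colimits by the discrete free perfect algebras $k_n=\bF_p[t_1^{1/p^\infty},\ldots,t_n^{1/p^\infty}]$, and then left Kan extend from that subcategory. This is false, and the failure is exactly the phenomenon this theorem is designed to handle. Every object of $\DAlg_{\bF_p}^\perf$ is coconnective, but many have homology concentrated in strictly negative degrees --- for instance $\bF_p^{K(\bF_p,n)}$ for $n\geq 1$, which appears prominently in Proposition~\ref{prop:free_derived_boolean} and in the remark immediately after the theorem statement in the introduction (the counterexample $\bS\bW(\bF_p^{K(\bF_p,1)})$). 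A sifted colimit of discrete $\bF_p$-modules is connective (filtered colimits preserve discreteness, geometric realizations land in the connective part), so a sifted colimit of the $k_n$'s inside a monadic $\infty$-category over $\D(\bF_p)$ can only ever produce discrete algebras. Your left Kan extension would therefore agree with the connective $\bS\bW_\syn$ of Definition~\ref{def:synqw} and would fail to extend it to anything new; in particular it would not be the functor claimed in the theorem, and the stated universal-property argument ``both sides preserve colimits, so check on generators'' would be vacuous off $\CAlg_{\bF_p}^\perf$.

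The paper's actual strategy is the opposite of left Kan extension from discrete objects. It uses Holeman's non-linear right left extension (Construction~\ref{const:nlrl}): restrict to finitely presented polynomial $\bF_p$-algebras, \emph{right} Kan extend from $\Vect_{\bF_p}^\omega$ to $\Perf(\bF_p)_{\leq 0}$ (this is the step that produces coconnective values and requires the delicate right extendability verification of Proposition~\ref{prop:extendable}), then left Kan extend to $\D(\bF_p)$ and geometrically realize the bar construction. One then checks that the resulting functor on $\DAlg_{\bF_p}$ factors through the localization $(-)_\perf$, using the associated-graded analysis. Your proposal omits the right Kan extension step entirely, which is the essential mechanism for producing the nonconnective values, and this gap cannot be repaired within the left-Kan-extension framework you set up. A secondary issue: your definition of $\bS\bW_\syn(k_n)$ as ``the $p$-completed free perfect synthetic commutative $\bS_\syn$-algebra'' presupposes a notion that the paper does not construct directly; the paper instead defines the synthetic Witt vectors on $\CAlg_{\bF_p}^\perf$ via the flat filtration on the already-constructed $\bS\bW(k)$ and only afterward extends.
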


By taking the underlying object of the filtration we obtain a nonconnective spherical Witt vector
functor $\bS\bW\colon\DAlg_{\bF_p}^\perf\rightarrow\CAlg_{\bS_p}^{\bE_\infty,\wedge}$, however the
resulting rings $\bS\bW(k)$ do not in general give a flat deformation of $k$ to
$\bS_p$. A counterexample is given by $\bS\bW(\bF_p^{K(\bF_p,1)})$, the nonconnective spherical Witt
vectors of the $\bF_p$-cochains of the Eilenberg--Mac Lane space $K(\bF_p,1)$.

With these nonconnective Witt vectors in hand, we were motivated to consider the integral cochain (or
schematization)
problem of Grothendieck (see~\cite[Chap.~VI]{grothendieck-pursuing}).
This has recently been achieved independently by Horel in~\cite{horel} and by
Kubrak--Shuklin--Zakharov in~\cite{ksz}. Horel proves that for finite
type nilpotent spaces $X$ one can recover $X$ as $\Map(\bZ^X,\bZ)$, where the mapping space is in
the $\infty$-category associated to a model category structure on cosimplicial binomial rings (to be
defined below). Kubrak--Shuklin--Zakharov prove the analogous theorem but with target an
$\infty$-categories of derived binomial rings.

We give an a priori different approach to
a similar theorem, where we work entirely within the world of derived
commutative rings and use Sullivan's arithmetic fracture squares of
spaces~\cite{sullivan-infinitesimal}, which says
that for nilpotent spaces $X$ the natural commutative diagram
$$\xymatrix{
    X\ar[r]\ar[d]&X_\bQ\ar[d]\\
    \prod_p X_p^\wedge\ar[r]&\left(\prod_p X_p^\wedge\right)_{\bQ}
}$$ is a pullback.

Sullivan's theory of rational cdgas~\cite{sullivan-infinitesimal}
shows that one can recover the rational homotopy type $X_\bQ$ of a
nilpotent space $X$ from $\bQ^X$ as a cdga, or, for us, as a rational derived commutative ring.
Mandell~\cite{mandell-padic} also studied the analogous problem of reconstructing $X_p^\wedge$ from its $\bE_\infty$-ring
of $\bF_p$-cochains, $\bF_p^X$. It is then clear that what has been missing is a way to glue in the
pieces because the rationalization of the $\bF_p$-cochains are not interesting.

Various attempts have been made to extend these results to integral cochains. The closest to our
work, besides~\cite{horel,ksz}, is that of Ekedahl~\cite{ekedahl-minimal} who also uses binomial rings to model homotopy
types. Mandell shows in~\cite{mandell-cochains} that $\Sscr^\op\rightarrow\CAlg_{\bZ}^{\bE_\infty}$
is at least faithful on finite type nilpotent spaces, and To\"en proves the analogous theorem in the
context of affine stacks~\cite{toen-schematisation}. The problem in both To\"en and Mandell's approaches
is that there are too many maps of the relevant kinds of rings. One needs to impose additional structural
constraints in order to cut these down to size.

We achieve our integral model in three steps. The first step is to introduce the $\infty$-category of
$p$-Boolean derived
commutative $\bF_p$-algebras,
$\DAlg_{\bF_p}^{\varphi=1}$, which consists of derived commutative $\bF_p$-algebras equipped with a
trivialization $\varphi\we\id$ of the Frobenius in a strong sense. The name is motivated by Boolean rings:
commutative $\bF_2$-algebras where $x^2=x$ for all $x$, or, equivalently, where the Frobenius acts
as the identity.
We show that for finite type nilpotent spaces $X$,
one has $\Map_{\DAlg_{\bF_p}^{\varphi=1}}(\bF_p^X,\bF_p)\we X_p^\wedge$.
More generally, we establish the following theorem.

\begin{theorem}[\ref{thm:derivedstone}]
    The map $\Pro(\Sscr_{p\fin})^\op\rightarrow\DAlg_{\bF_p}^{\varphi=1}$ given by taking continuous
    $\bF_p$-cochains is an equivalence.
\end{theorem}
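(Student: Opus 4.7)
The plan is to bootstrap from the preceding claim that $\Map_{\DAlg_{\bF_p}^{\varphi=1}}(\bF_p^X,\bF_p)\we X_p^\wedge$ for finite type nilpotent $X$, which specializes componentwise to $\Map_{\DAlg_{\bF_p}^{\varphi=1}}(\bF_p^X,\bF_p)\we X$ for $X\in\Sscr_{p\fin}$, since $p$-finite spaces are already $p$-complete and their connected components are nilpotent. First I would upgrade this to full faithfulness on all of $\Sscr_{p\fin}^{\op}$: for $X,Y\in\Sscr_{p\fin}$, write $\bF_p^Y\we\lim_Y\bF_p$ as the limit in $\DAlg_{\bF_p}^{\varphi=1}$ of the constant diagram indexed by $Y$, and use that $\Map_{\DAlg_{\bF_p}^{\varphi=1}}(\bF_p^X,-)$ preserves limits to compute
\begin{equation*}
    \Map_{\DAlg_{\bF_p}^{\varphi=1}}(\bF_p^X,\bF_p^Y)\we\lim_Y\Map_{\DAlg_{\bF_p}^{\varphi=1}}(\bF_p^X,\bF_p)\we\lim_Y X\we\Map_{\Sscr}(Y,X).
\end{equation*}

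Next I would verify that $\bF_p^X$ is compact in $\DAlg_{\bF_p}^{\varphi=1}$ for each $X\in\Sscr_{p\fin}$, enabling Ind-extension of the fully faithful embedding $\Sscr_{p\fin}^{\op}\rightarrow(\DAlg_{\bF_p}^{\varphi=1})^\omega$ to a fully faithful functor $\Pro(\Sscr_{p\fin})^{\op}=\Ind(\Sscr_{p\fin}^{\op})\rightarrow\DAlg_{\bF_p}^{\varphi=1}$ which one then checks agrees with continuous cochains on pro-objects. Since every $p$-finite space is built from $K(\bZ/p,n)$'s by a finite tower of principal fibrations, cochains translate this into a finite colimit diagram in $\DAlg_{\bF_p}^{\varphi=1}$; because compactness is preserved by finite colimits, it suffices to check compactness of $\bF_p^{K(\bZ/p,n)}$ for $n\geq 0$. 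The case $n=0$ is a finite product, trivially compact. For $n\geq 1$ one would prove compactness by identifying $\bF_p^{K(\bZ/p,n)}$ with a manifestly finitely-presented corepresenting object in $\DAlg_{\bF_p}^{\varphi=1}$, leveraging the trivialization $\varphi\we\id$ to cut the formally large cohomology ring down to something controlled by finite data.

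For essential surjectivity, $\DAlg_{\bF_p}^{\varphi=1}$ is presentable as a reflective subcategory of $\DAlg_{\bF_p}$, hence is the Ind-completion of its compact objects; it then remains to show every compact $R$ is (up to retracts) of the form $\bF_p^X$ with $X\in\Sscr_{p\fin}$. I would set $X_R:=\Map_{\DAlg_{\bF_p}^{\varphi=1}}(R,\bF_p)$, verify that $X_R\in\Sscr_{p\fin}$ using compactness of $R$ and the corepresentation of $\bF_p^{K(\bZ/p,n)}$, and check that the natural comparison $R\rightarrow\bF_p^{X_R}$ is an equivalence by reduction to the case $R=\bF_p^Y$, where the full faithfulness of the first step closes the loop. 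I expect the principal obstacle to be the compactness of $\bF_p^{K(\bZ/p,n)}$ for $n\geq 1$: this requires a clean corepresentation theorem inside $\DAlg_{\bF_p}^{\varphi=1}$ whose finite presentation is \emph{not} visible from the underlying $\bE_\infty$-algebra, but only becomes so after imposing $\varphi\we\id$. This is the step that genuinely uses the ``$p$-Boolean'' refinement and which explains why the integral cochain problem can be solved in $\DAlg_{\bF_p}^{\varphi=1}$ but not in the larger $\bE_\infty$-setting of Mandell.
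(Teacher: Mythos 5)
Your high-level architecture is close to the paper's: you correctly identify that the theorem reduces to (i) full faithfulness on $\Sscr_{p\fin}^\op$, (ii) compactness of the objects $\bF_p^X$ for $X$ $p$-finite so that the functor extends to $\Pro(\Sscr_{p\fin})^\op\we\Ind(\Sscr_{p\fin}^\op)$, and (iii) that these compact objects generate. You also correctly pinpoint the crux: the case of $\bF_p^{K(\bF_p,n)}$, $n\geq 1$, and the necessity of the trivialization $\varphi\we\id$ for a finite presentation. But there are two genuine gaps.

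First, your proposal is circular at the starting line. You write that you will ``bootstrap from the preceding claim that $\Map_{\DAlg_{\bF_p}^{\varphi=1}}(\bF_p^X,\bF_p)\we X_p^\wedge$ for finite type nilpotent $X$.'' That claim is not an input to the theorem; it is (the co-representability form of) the very statement you are asked to prove, and in the paper it is deduced, not assumed. Without it, your first display is not established, and everything downstream is conditional. To de-circularize you must build $X$ out of principal fibrations over $K(\bF_p,n+1)$, use an Eilenberg--Moore argument (this is where nilpotence is used, not just $p$-completeness of components) to see that $\bF_p^{(-)}$ sends those pullbacks to pushouts, and thereby reduce to $X=K(\bF_p,n)$. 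You gesture at exactly this reduction in the compactness step, but you never use it to prove the fully-faithfulness input, which is where it actually belongs.

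Second, and more fundamentally, you explicitly defer the heart of the proof: ``For $n\geq 1$ one would prove compactness by identifying $\bF_p^{K(\bZ/p,n)}$ with a manifestly finitely-presented corepresenting object in $\DAlg_{\bF_p}^{\varphi=1}$.'' This is precisely Proposition~\ref{prop:free_derived_boolean}, which is where all the work is, and you give no mechanism for proving it. The paper's argument is not formal: it runs through To\"en's theory of affine stacks, the forthcoming identification of coconnective derived commutative $\bF_p$-algebras with cosimplicial $\bF_p$-algebras, the Artin--Schreier fiber sequence $\underline{K(\bF_p,n)}\rightarrow K(\Oscr,n)\xrightarrow{\id-\varphi}K(\Oscr,n)$, and an Eilenberg--Moore computation to identify $\bF_p^{K(\bF_p,n)}$ with $(F_n)_{\varphi=1}$ where $F_n$ is the free derived commutative ring on a degree $-n$ class. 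That identification is what delivers both the corepresentability (hence fully faithfulness on Eilenberg--Mac Lane spaces) and the compactness. As stated, your proof punts on this step and so has not actually proved the theorem.

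A smaller remark on essential surjectivity: your plan (``every compact $R$ is, up to retract, of the form $\bF_p^X$'') is stronger than necessary and has its own unaddressed step, namely showing $X_R:=\Map_{\DAlg_{\bF_p}^{\varphi=1}}(R,\bF_p)\in\Sscr_{p\fin}$ for compact $R$. The paper's route is lighter: once one knows that $\bF_p^{K(\bF_p,n)}$ (for $n\geq1$) together with the finite products $\bF_p^S$ are compact generators and lie in the image, and that the functor is fully faithful, the Ind-extension is automatically an equivalence. You should switch to the generator argument rather than trying to classify all compact objects.
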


This is closely related to a theorem
of Kriz~\cite{kriz} who studies cosimplicial $p$-Boolean rings and gives a similar
result.

Note that as in Kriz~\cite{kriz} and Mandell~\cite{mandell-padic}, it is not enough to work with
perfect derived commutative $\bF_p$-algebras. There are functors
$$\DAlg_{\bF_p}^{\varphi=1}\rightarrow\DAlg_{\bF_p}^\perf\rightarrow\DAlg_{\bFbar_p}^\perf,$$
where the first is given by forgetting the trivialization of Frobenius and the second is extension
of scalars. The composite is an equivalence. So, the theorem above can, as in Mandell's paper, be
restated in terms of $\bFbar_p$-cochains. However, by adjunction
$$\Map_{\DAlg_{\bF_p}^\perf}(\bF_p^X,\bF_p)\we\Map_{\DAlg_{\bF_p}^{\varphi=1}}(\bF_p^X,\bF_p^{\varphi=1})\we\Map_{\DAlg_{\bF_p}^{\varphi=1}}(\bF_p^X,\bF_p^{S^1}),$$
which is equivalent to $\Map(S^1,X_p^\wedge)$, the $p$-adic free loopspace.

The second step is to give a $p$-adic lift of this category, namely $\DAlg_{\bZ_p}^{\delta,\varphi=1,\wedge}$,
the $\infty$-category of $p$-complete derived $\delta$-rings with a trivialization of the Frobenius.
This $\infty$-category is in fact equivalent to $\DAlg_{\bF_p}^{\varphi=1}$ and hence
$\Map_{\DAlg_{\bZ_p}^{\delta,\varphi=1,\wedge}}(\bZ_p^X,\bZ_p)$ recovers $X_p^\wedge$ for a finite
type nilpotent space $X$. This gives us the integral $p$-adic data we need to glue.

The third step is to define an appropriate integral model. The theory of $\lambda$-rings gives an
integral generalization of $\delta$-rings: the Adams operations $\psi^p$ define $\delta$-ring
structures for each prime $p$, and the Adams operations $\psi^n$ commute with $\psi^p$ so they act
through $\delta$-ring endomorphisms (with respect to any $p$). The analog of requiring $\varphi=1$
is to ask for $\psi^p=1$ for all $p$ simultaneously, in a way which is compatible across different
$p$.\footnote{This perspective was originally motivated by the approach of Yuan to an integral
spherical model in~\cite{yuan-integral}. We also thank Thomas Nikolaus for emphasizing the
importance of the compatibility across different primes.} For discrete $\lambda$-rings, these are the binomial rings, so named because for a
$\lambda$-ring where all Adams operations are the identity, the $\lambda$-ring operations are given
by $$\lambda^n(x)=\binom{x}{n}=\frac{x(x-1)\cdots(x-n+1)}{n!}.$$
Conversely, a torsion free commutative ring $R$ which is closed under the operations
$x\mapsto\binom{x}{n}$ for all $n\geq 0$ is canonically a $\lambda$-ring. See~\cite{elliott}
or~\cite{xantcha} for details.

We define an $\infty$-category of derived $\lambda$-rings $\DAlg_{\bZ}^\lambda$ by deriving the free
$\lambda$-ring monad, as in~\cite{raksit} in the case of derived commutative rings
or~\cite{holeman-derived}
in the case of derived $\delta$-rings.
We show that there is an action of $\B\bZ_{>0}$ on this $\infty$-category, where $\bZ_{>0}$ is the
monoid of positive integers under multiplication, and we define the
$\infty$-category of binomial derived $\lambda$-rings $\DAlg_{\bZ}^{\lambda,\psi=1}$
as $$(\DAlg_{\bZ}^\lambda)^{\h\B\bZ_{>0}}.$$
This $\infty$-category has all limits and initial object $\bZ$. Thus, for any space
$X\in\Sscr$, we can form $\bZ^X$ as a binomial derived $\lambda$-ring.
Our take on Grothendieck's problem is the following (see also~\cite{horel,ksz}).

\begin{theorem}[\ref{thm:integral_cochain}]
    The functor $\Sscr_{\bZ\ft}^\op\rightarrow\DAlg_{\bZ}^{\lambda,\psi=1}$ is fully faithful, where
    $\Sscr_{\bZ\ft}$ is the $\infty$-category of finite type nilpotent spaces.
\end{theorem}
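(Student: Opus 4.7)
The plan is to reduce to the rational and $p$-adic cases by comparing both sides along Sullivan's arithmetic fracture square, which is already the organizing principle of the introduction. Given $X,Y\in\Sscr_{\bZ\ft}$, I aim to show that
$$\Phi_{X,Y}\colon\Map_\Sscr(Y,X)\to\Map_{\DAlg_\bZ^{\lambda,\psi=1}}(\bZ^X,\bZ^Y)$$
is an equivalence by fracturing both sides compatibly. On the space side, Sullivan's square presents $X$ as $X_\bQ\times_{(\prod_p X_p^\wedge)_\bQ}\prod_p X_p^\wedge$ and so exhibits $\Map_\Sscr(Y,X)$ as a pullback of three mapping spaces. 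On the ring side, the usual arithmetic fracture $\bZ\we\bQ\times_{(\prod_p\bZ_p)_\bQ}\prod_p\bZ_p$ lifts to $\DAlg_\bZ^{\lambda,\psi=1}$ (every factor is canonically binomial, either as a $\bQ$-algebra or as a product of $p$-adic integers), and since $R\mapsto R^Y$ preserves limits in $R$, this yields
$$\bZ^Y\we\bQ^{Y_\bQ}\times_{(\prod_p\bZ_p)_\bQ^Y}\prod_p\bZ_p^{Y_p^\wedge}$$
in $\DAlg_\bZ^{\lambda,\psi=1}$, using that $Y\in\Sscr_{\bZ\ft}$ to identify $\bQ^Y\we\bQ^{Y_\bQ}$ and $\bZ_p^Y\we\bZ_p^{Y_p^\wedge}$. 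Mapping out of $\bZ^X$ then turns this into a pullback presentation of the target of $\Phi_{X,Y}$.

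The three terms are identified as follows. Rationally, the condition $\psi^n=1$ for all $n$ forces $\lambda^n(x)=\binom{x}{n}$, so the forgetful functor $\DAlg_\bQ^{\lambda,\psi=1}\to\DAlg_\bQ$ is an equivalence. Combined with the derived form of Sullivan's reconstruction theorem in the framework of~\cite{raksit}, this gives
$$\Map_{\DAlg_\bZ^{\lambda,\psi=1}}(\bZ^X,\bQ^{Y_\bQ})\we\Map_{\DAlg_\bQ}(\bQ^{X_\bQ},\bQ^{Y_\bQ})\we\Map_\Sscr(Y_\bQ,X_\bQ).$$
$p$-adically, the equivalence $\DAlg_{\bZ_p}^{\delta,\varphi=1,\wedge}\we\DAlg_{\bF_p}^{\varphi=1}$ together with Theorem~\ref{thm:derivedstone} give
$$\Map_{\DAlg_\bZ^{\lambda,\psi=1}}(\bZ^X,\bZ_p^{Y_p^\wedge})\we\Map_{\DAlg_{\bZ_p}^{\delta,\varphi=1,\wedge}}(\bZ_p^{X_p^\wedge},\bZ_p^{Y_p^\wedge})\we\Map_\Sscr(Y_p^\wedge,X_p^\wedge),$$
the key structural input being that $\psi^p$ provides the $\delta$-ring Frobenius lift after $p$-completion, so $\psi^p=1$ reduces modulo $p$ to $\varphi=1$.

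The main obstacle is the adelic glue term, namely the identification
$$\Map_{\DAlg_\bZ^{\lambda,\psi=1}}\bigl(\bZ^X,({\textstyle\prod_p}\bZ_p)_\bQ^Y\bigr)\we\Map_\Sscr\bigl(Y,({\textstyle\prod_p}X_p^\wedge)_\bQ\bigr),$$
together with the coherence of the three identifications with the two pullback squares. This term couples all primes simultaneously, so one must control how a single rationalization interacts with the trivializations $\psi^p=1$ for varying $p$, and how finite type of $X$ permits commuting rationalization past the infinite product on the space side. The natural strategy is to present $(\prod_p\bZ_p)_\bQ^Y$ as the rationalization of the already-identified product $\prod_p\bZ_p^{Y_p^\wedge}$ and to apply Sullivan-type reasoning to the resulting $\bQ$-algebra. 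Once this is in place, the two pullback squares match termwise, so $\Phi_{X,Y}$ is an equivalence and the theorem follows.
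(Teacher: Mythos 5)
Your proposal follows essentially the same route as the paper's proof: fracture along Sullivan's arithmetic square, identify the rational and $p$-adic pieces via Theorems~\ref{thm:sullivan} and~\ref{thm:delta_cochains} (equivalently, \ref{thm:derivedstone}), and match the pullback squares. The paper streamlines this in two ways you might note. First, it reduces to $Y\we\ast$ at the outset (both sides of $\Phi_{X,Y}$ take colimits in $Y$ to limits), which removes the need to manipulate $\bZ^Y$ as an object. Second, rather than fracturing the object $\bZ$ (or $\bZ^Y$) in $\DAlg_{\bZ}^{\psi=1}$ and then applying $\Map(\bZ^X,-)$, it records once and for all the categorical fracture Lemma~\ref{lem:binomial_fracture}, which gives directly a pullback of mapping spaces
\begin{equation*}
\Map_{\DAlg_{\bZ}^{\psi=1}}(R,S)\we\Map_{\DAlg_{\bQ}}(R_\bQ,S_\bQ)\times_{\Map_{\DAlg_{\bQ}}(R_\bQ,(\prod_p S_p^\wedge)_\bQ)}\prod_p\Map_{\DAlg_{\bZ_p}^{\delta,\varphi=1,\wedge}}(R_p^\wedge,S_p^\wedge).
\end{equation*}
Your object-level fracture of $\bZ^Y$ amounts to the same thing, and your verification that all four corners of the arithmetic square of $\bZ$ are canonically binomial is exactly the content packaged in that lemma. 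The adelic term you flag as the main obstacle is handled in the paper precisely as you sketch: the bottom-right corner is a mapping space of $\bQ$-algebras, so Theorem~\ref{thm:sullivan} identifies it, and the comparison with $(\prod_p X_p^\wedge)_\bQ$ is then Sullivan's arithmetic square for nilpotent spaces. So your instinct is correct; the remaining work is the coherence bookkeeping, which Lemma~\ref{lem:binomial_fracture} was designed to absorb.
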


This theorem was conceived and proved independently of the works of Horel~\cite{horel} and
Kubrak--Shuklin--Zakharov~\cite{ksz} and in fact all three theorems are distinct as they use
different $\infty$-categories of algebras to model integral homotopy types. In future work, we plan
to investigate the relationships between these models. We conjecture here that they are equivalent.

We conclude the paper by returning to spherical models. First, using synthetic cochains, we give a
$p$-adic model over the sphere in Theorem~\ref{thm:synthetic} which works for finite type $p$-complete spaces,
answering~\cite[7.4(1)]{yuan-integral}. Second, we make a remark about another question of
Yuan in Remark~\ref{rem:tcs}. Third, we construct in Proposition~\ref{prop:spherical_binomial} spherical analogs of perfect
$\lambda$-rings, which in Conjecture~\ref{conj:synthetic_binomial} we suggest should give
spherical integral models of spaces beyond the finite complex case of~\cite{yuan-integral}.

\paragraph{Relation to other work.} Besides the cochain theorems cited above, there has been work on
using various structures on chains to model homotopy types.
Quillen~\cite{quillen-rational} uses dg Lie algebras over $\bQ$, there is work of
Goerss~\cite{goerss-padic} in the $p$-adic case using coalgebras, and work of
Rivera--Wierstra--Zeinalian~\cite{rivera-wierstra-zeinalian} and
Blomquist--Harper~\cite{blomquist-harper} using coalgebras over $\bZ$. It remains 
an interesting open problem to give a Lie-theoretic approach closer in spirit to Quillen's in the
$p$-adic or integral settings, although see the recent preprint~\cite{lucio}.

\paragraph{Notation.}
If $k$ is an $\bE_\infty$-ring, let $\CAlg_k^{\bE_\infty}$ denote the $\infty$-category of
$\bE_\infty$-algebras under $k$, let $\CAlg_k^\cn$ denote the $\infty$-category of connective
$\bE_\infty$-rings under $k$, and let $\CAlg_k^{\ccn}$ denote the $\infty$-category of coconnective
$\bE_\infty$-rings under $k$. If $R$ is a commutative ring, let $\CAlg_R$ be the category of
commutative $R$-algebras. If $S$ is a (perfect) commutative $\bF_p$-algebra, let $\CAlg^\perf_{S}$ be
the category of perfect commutative $S$-algebras. We denote $\infty$-categories of
$p$-complete objects as, for example, $\CAlg_k^{\bE_\infty,\wedge}$. Other ($\infty$-)-categories of algebras will be
introduced throughout the course of the paper. We write $\PreStk_k$ for the $\infty$-category of functors
$\CAlg_k^\cn\rightarrow\Sscr$, and let $\Stk_k\subseteq\PreStk_k$ be the full subcategory of
functors satisfying flat descent. We write $\bZ_p^\un$ for $\bW(\bFbar_p)$ and $\bS_p^\un$ for the
spherical Witt vectors of $\bFbar_p$.

\paragraph{Acknowledgments.}
We would like to thank Bhargav Bhatt, Lukas Brantner, David Gepner, Paul Goerss, Adam Holeman, Geoffroy Horel,
Dmitry Kubrak, Anna Lipman, Akhil Mathew, Thomas Nikolaus, Joost Nuiten, and Allen Yuan for
a variety of helpful discussions. Special thanks go to Adam Holeman and Dmitry Kubrak for their comments on
drafts of this paper. Even more special thanks go to Maxime Ramzi and Mura Yakerson for pointing
out a critical flaw in our original attempt to prove Theorem~\ref{thm:intromain} via
Lemma~\ref{lem:main_lemma} and for discussions around finding a correct proof.

This work was supported by NSF grants
DMS-2102010 ({\em Cyclotomic spectra and $p$-divisible groups}) and DMS-2152235 ({\em FRG: higher
categorical structures in algebraic geometry}) and by Simons
Fellowship 666565 ({\em Motivic filtrations}).

\section{Spherical Witt vectors via transmutation}\label{sec:spherical_witt_vectors}

In this section, we give an alternative approach to the $p$-typical spherical Witt vector functor
constructed in~\cite{dag13,bsy}.

\subsection{Transmutation}\label{sub:transmutation}

To turn stacks over one base commutative ring into stacks over another, one can sometimes transmute,
following~\cite[Rem.~2.3.8]{bhatt-fgauges}.

\begin{definition}[Transmutation]
    Let $k$ and $R$ be $\bE_\infty$-rings. A connective $\bE_\infty$-$k$-algebra stack $A$ over $\Spec R$ is a
    functor $\CAlg_R^\cn\rightarrow\CAlg_k^\cn$ which satisfies flat descent. Given such an $A$, there is an induced
    functor $\PreStk_k\rightarrow\PreStk_R$ obtained by sending $X\in\PreStk_k$ to the functor
    $$X^A\colon B\mapsto X(A(B))$$
    on $\CAlg_R^\cn$. The prestack $X^A$ is the transmutation of $X$ by $A$.
\end{definition}

\begin{example}[The de Rham stack]
    Let $k$ be a characteristic $0$ field and let $X$ be a stack over $\Spec k$. The de Rham stack
    of $X$ is the functor $X^\dR(B)=X(B_\red)$, where $B_\red$ denotes the quotient of $B$ by its
    nilradical. If $\bA^{1,\dR}$ denotes the functor $B\mapsto B_\red$, which is also the de Rham stack
    of $\bA^1$, then $X^\dR$ is the transmutation of $X$ by $\bA^{1,\dR}$.
\end{example}

Suppose that $A=\Spec\Oscr(A)=\Map_{R}(\Oscr(A),-)$, where $\Oscr(A)$ is a possibly
nonconnective $\bE_\infty$-$k$-algebra. (Such stacks are called affine~\cite{toen-affines} or coaffine in the
literature~\cite{dag8} in related contexts.) In particular, the $\bE_\infty$-$k$-algebra structure on $A$ arises from
an $\bE_\infty$-$k$-coalgebra structure on $\Oscr(A)$. Then,
$$\CAlg_k^\cn\leftarrow\CAlg_R^\cn\rcolon A$$ preserves all limits as it is given by
$\Map_R(\Oscr(A),-)$. Thus, there is a left adjoint $\T^A$ to $A$. By definition, for a connective
$\bE_\infty$-$k$-algebra $C$ and a connective $\bE_\infty$-$R$-algebra $B$, we have
$$\Map_R(\T^A(C),B)\we\Map_k(C,\Map_R(\Oscr(A),B))\we (\Spec C)^A(B).$$
The transmutation $(\Spec C)^A$ of $\Spec C$ by $A$ is the affine scheme associated
to the connective $\bE_\infty$-$R$-algebra $\T^A(C)$.

\begin{example}[$A$ is the transmutation of the affine line]
    If $k\{t\}$ denotes the free $\bE_\infty$-$k$-algebra on an element in degree $0$, then
    $\T^A(k\{t\})\we A$ as stacks.
\end{example}

\begin{variant}
    There are variants of transmutation where one considers for example animated commutative
    rings and derived commutative rings instead of connective $\bE_\infty$-rings and all
    $\bE_\infty$-rings.
\end{variant}

\subsection{The perfect $p$-complete affine line over the sphere}\label{sec:swconnective}

Each $\bE_\infty$-space comes equipped with a canonical ``multiplication-by-$p$'' endomorphism which
we call a Frobenius map.

\begin{definition}[Frobenius on $\bE_\infty$-spaces]\label{def:frobenius}
    There is a natural transformation of the identity functor on $\bE_\infty$-spaces given for an
    $\bE_\infty$-space $M$ as the
    composition $\varphi_p\colon M\xrightarrow{\Delta_p}M^{\times p}\xrightarrow{\mu_p} M$, where the left-hand map is the
    $p$-fold diagonal and the right-hand map is the $p$-fold multiplication map induced by the $\bE_\infty$-space
    structure on $M$. 
\end{definition}

\begin{construction}[$p$-telescope]
    Given any $\bE_\infty$-space $M$, we can form the colimit $$M[1/p]=\colim_{\varphi_p}
    M=\colim(M\xrightarrow{\varphi_p} M\xrightarrow{\varphi_p}M\rightarrow\cdots).$$
    Similarly, we let $M^{[1/p]}=\lim_{\varphi_p}(\cdots\rightarrow
    M\xrightarrow{\varphi_p}M\xrightarrow{\varphi_p}M)$.
\end{construction}

\begin{definition}[$p$-perfect $\bE_\infty$-spaces]\label{def:pperfect}
    An $\bE_\infty$-space $M$ is $p$-perfect if $\varphi_p$ is an equivalence.
    The inclusion of $p$-perfect $\bE_\infty$-spaces into all $\bE_\infty$-spaces admits a left
    adjoint, $M\mapsto M_\perf$.
\end{definition}

\begin{warning}[Telescopes are not perfect]
    In general, the natural map $M[1/p]\rightarrow M_\perf$ is not an equivalence, as was
    erroneously claimed in the first version of this paper.
    See Remark~\ref{rem:pitfalls} for details.
\end{warning}

%

Despite the warning above, telescopes are enough in certain $p$-complete contexts.

\begin{proposition}\label{prop:discrete}
    If $R$ is a $p$-complete $\bE_\infty$-ring, then $(\Omega^{\infty}R)^{[1/p]}$ is discrete, where we compute
    the inverse limit tower using the $\bE_\infty$-structure on the infinite loopspace arising
    from multiplication. Moreover, $\pi_0(\Omega^{\infty}R)^{[1/p]}\iso(R/p)^{\perf}$, the inverse
    limit perfection of the commutative $\bF_p$-algebra $R/p$.
\end{proposition}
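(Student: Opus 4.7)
The plan is to compute $\pi_n((\Omega^{\infty}R)^{[1/p]})$ via the Milnor exact sequence for the homotopy groups of a limit of pointed spaces. Fix a compatible basepoint $(x_i)$ with $x_{i+1}^p = x_i$; such a choice pins down a basepoint for the tower. Because $\Omega^{\infty}R$ is simultaneously an infinite loop space under \emph{addition}, additive translation by $x_i$ identifies $\pi_n(\Omega^{\infty}R, x_i)$ with $\pi_n(R)$ for each $n\geq 1$.

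The key computation is that under these identifications, the Frobenius $\varphi_p$ acts on $\pi_n(R)$ as multiplication by $p x_{i+1}^{p-1}$; this is the first-order coefficient in the expansion $\varphi_p(x_{i+1}+\epsilon) = x_{i+1}^p + p x_{i+1}^{p-1}\epsilon + O(\epsilon^2)$ in the ring $R$. In particular every transition map in the tower is divisible by $p$. Since $R$ is $p$-complete, each $\pi_n(R)$ is a derived $p$-complete abelian group, and on such a group a tower of maps divisible by $p$ has both $\lim$ and $\lim^1$ equal to zero: any element of the inverse limit is infinitely $p$-divisible and therefore vanishes by Ext-completeness, and a similar Hensel-style convergence argument kills $\lim^1$. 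The Milnor sequence then yields $\pi_n((\Omega^{\infty}R)^{[1/p]}, (x_i))=0$ for every $n\geq 1$ and every compatible basepoint, establishing discreteness.

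For $\pi_0$, the analogous short exact sequence combined with the vanishing of $\lim^1 \pi_1$ identifies $\pi_0((\Omega^{\infty}R)^{[1/p]})$ with $\lim_{x\mapsto x^p}\pi_0(R)$. To match this with $(R/p)^{\perf} = \lim_{x\mapsto x^p}\pi_0(R)/p$, I would invoke the classical fact that reduction mod $p$ is a bijection $\lim_{x\mapsto x^p} A \to \lim_{x\mapsto x^p} A/p$ for any derived $p$-complete commutative ring $A$; the inverse sends $(\bar a_i)$ to $(\lim_{n\to\infty}\tilde a_{i+n}^{p^n})$ for arbitrary lifts $\tilde a_j$, convergent because $\tilde a^{p^n} \equiv \tilde a'^{p^n} \pmod{p^{n+1}}$ for any two lifts. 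The main obstacle is rigorously justifying the derivative identity $\varphi_p \sim p x^{p-1}$ on $\pi_n$ at a non-identity basepoint, since the additive translation used to identify $\pi_n(\Omega^{\infty}R, x)$ with $\pi_n(R)$ is extrinsic to the multiplicative structure defining $\varphi_p$ and they interact only through the distributive law of the ring; I would handle this by restricting $\varphi_p$ to the square-zero thickening $R\oplus\Sigma^n\pi_n(R)$ and computing via the ring multiplication on $\pi_*(R)$.
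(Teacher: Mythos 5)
Your proof follows the same overall strategy as the paper: identify $\pi_n(\Omega^\infty R, x)$ with $\pi_n R$ by translation, show that the Frobenius transition map is divisible by $p$ for $n\geq 1$, and then invoke derived $p$-completeness to kill $R\lim$ of the tower. The difference is in how divisibility by $p$ is established. The paper proves the stronger, model-independent statement that $M^{[1/p]}$ is discrete for any simple $\bE_\infty$-space $M$ with derived $p$-complete higher homotopy: the $p$-fold diagonal sends $a\in\pi_i(M,x)$ to $a_1+\cdots+a_p$, simplicity of $M$ upgrades the unbased $\Sigma_p$-equivariance of $\mu_p$ to a based one so that $\mu_p(a_1)=\cdots=\mu_p(a_p)$, and thus $\varphi_{p,*}(a)=p\,\mu_p(a_1)$. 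You instead exploit the $\bE_\infty$-ring structure on $R$ itself and compute via the binomial expansion that $\varphi_{p,*}$ is multiplication by $p x_{i+1}^{p-1}\in\pi_0 R$; this is just the explicit form of the paper's factorization in the special case $M=\Omega^\infty R$ (the higher terms $\binom{p}{k}x^{p-k}a^k$ for $k\geq 2$ land in $\pi_{kn}$ and so vanish in the square-zero computation). Your approach is cleaner in the case at hand but does not by itself recover the more general statement used implicitly to set up Corollary~\ref{cor:collapse}, and the $\pi_0$ identification at the end is exactly the appeal to Scholze's tilting lemma that the paper makes.

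One small caution: your argument that ``any element of the inverse limit is infinitely $p$-divisible and therefore vanishes by Ext-completeness'' is slightly loose, since a derived $p$-complete abelian group need not be $p$-adically separated, so $\bigcap_n p^n A$ can a priori be nonzero. The correct conclusion still holds and is easy to fill in: since each transition map factors as $p$ times a genuine homomorphism, the cone maps $\pi_n\colon R\lim T\to A_n$ satisfy $\pi_n = p\rho_n$ with $\rho_n = g_n\circ\pi_{n+1}$ assembling into a cone, so multiplication by $p$ admits a one-sided inverse on $R\lim T$; this forces the homotopy groups of $R\lim T$ to be $p$-divisible and $p$-torsion-free, hence $\bZ[1/p]$-modules, hence zero by derived $p$-completeness. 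The paper's phrasing has the same degree of compression at this step.
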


\begin{proof}
    More generally, suppose that $M$ is an $\bE_\infty$-space satisfying the following conditions:
    \begin{enumerate}
        \item[(a)] $M$ is simple as a space;
        \item[(b)] for all $x\in M$ and all $i\geq 1$, $\pi_i(M,x)$ is a derived $p$-complete abelian group.
    \end{enumerate}
    Then, $M^{[1/p]}$ is discrete.

    To prove the claim, consider a point $x\in M$ and the composition
    $$\pi_i(M,x)\xrightarrow{\Delta_p}\pi_i(M^{\times p},(x,\ldots,x))\xrightarrow{\mu_p}\pi_i(M,x^p).$$
    In general, while the map $\mu_p\colon M^{\times p}\rightarrow M$ is $\Sigma_p$-equivariant for the trivial
    action on $M$, the induced map on connected components composition $M^{\times
    p}_{(x,\ldots,x)}\rightarrow M_{x^p}$ need not be
    $\Sigma_p$-equivariant as a map of pointed spaces, though it will be as a map of unpointed
    spaces. Here, if $x\in M$, then $M_x$ denotes the component of $M$
    containing $x$. Fix $a\in\pi_i(M,x)$. Then,
    $\Delta_p(a)=(a,0,0,\ldots,0)+(0,a,0,\ldots,0)+\cdots+(0,0,0,\ldots,a)\in\pi_i(M^{\times
    p},(x,\ldots x))\iso(\pi_i(M,x))^{\times p}$. Therefore,
    $\mu_p(\Delta_p(a))=\mu_p(a,0,0,\ldots,0)+\cdots+\mu_p(0,0,0,\ldots,a)$.
    Write $a_i$ for the element with $a$ in the $i$th spot and zeros elsewhere. Thus,
    $\Delta_p(a)=a_1+\cdots+a_p$ and $\mu_p(\Delta_p(a))=\mu_p(a_1)+\cdots+\mu_p(a_p)$.
    By simplicity of $M$, if $(i\,j)\in\Sigma_p$ is the action permutation which switches $i$ and
    $j$, then $\mu_p(a_i)=\mu_p(a_j)$, because the unbased homotopy between
    $\mu_p\circ(i\, j)$ and $\mu_p$ can be upgraded to a based homotopy. If $r$ denotes the idempotent of $M^{\times
    p}$ given by projecting onto the first component and sending all other components to the
    basepoint $x$, then $\mu_p(\Delta_p(a))=p\mu_p(r(\Delta_p(a)))$. It follows that on homotopy groups,
    $\varphi_{p,*}\colon\pi_i(M,x)\rightarrow\pi_i(M,x^p)$ is divisible as a function by $p$ for $i\geq 1$.
    Thus, the positive homotopy groups of the inverse limit vanish at all basepoints by derived
    $p$-completeness.

    The final claim is precisely~\cite[Lem.~3.4(i)]{scholze-perfectoid}.
\end{proof}

If $M$ is an $\bE_\infty$-space, let $\bS[M]$ denote the spherical monoid ring and
$\bS[M]_p^\wedge$ its $p$-completion. Let $\Sscr^{\bE_\infty}$ denote the $\infty$-category of
$\bE_\infty$-spaces.

\begin{corollary}\label{cor:collapse}
    If $M$ is an $\bE_\infty$-space and $M_\perf$ is its $p$-perfection, then the natural maps
    $\bS[M[1/p]]_p^\wedge\rightarrow\bS[M_\perf]_p^\wedge\rightarrow\bS[(\pi_0M)[1/p]]_p^\wedge$
    are equivalences.
\end{corollary}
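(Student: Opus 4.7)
The plan is to establish both equivalences via the Yoneda lemma applied to $\CAlg_{\bS_p}^{\bE_\infty,\wedge}$. By the $\bS[-]_p^\wedge \dashv \Omega^\infty$ adjunction, it suffices to show that for every $p$-complete $\bE_\infty$-ring $R$, the induced maps
\[
\Map_{\Sscr^{\bE_\infty}}((\pi_0 M)[1/p],\, \Omega^\infty R) \to \Map_{\Sscr^{\bE_\infty}}(M_\perf,\, \Omega^\infty R) \to \Map_{\Sscr^{\bE_\infty}}(M[1/p],\, \Omega^\infty R)
\]
are all equivalences.

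The central input is Proposition~\ref{prop:discrete}, which identifies $(\Omega^\infty R)^{[1/p]} \simeq (R/p)^\perf$ as a discrete commutative monoid. Combined with the tautological identity $\varphi_p^{*} = \varphi_{p,*}$ on any mapping space, valid because $\varphi_p$ is a natural transformation of the identity functor on $\Sscr^{\bE_\infty}$, this will collapse each of the three mapping spaces. For $N=M[1/p]$, the colimit defining $M[1/p]$ becomes a limit under $\Map(-,\Omega^\infty R)$, and the naturality identity lets me commute this limit past the target to produce $\Map(M,(\Omega^\infty R)^{[1/p]})\simeq\Map(M,(R/p)^\perf)$. For $N=M_\perf$, pre-composition by $\varphi_p$ on $\Map(M_\perf,\Omega^\infty R)$ is already an equivalence because $\varphi_p$ is an equivalence on $M_\perf$; by naturality the post-composition $\varphi_{p,*}$ is then also an equivalence, so the Frobenius tower on the target collapses and $\Map(M_\perf,\Omega^\infty R)\simeq\Map(M_\perf,(R/p)^\perf)$. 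For $N=(\pi_0 M)[1/p]$, the same telescope argument applied to $\pi_0 M$ gives $\Map(\pi_0 M,(R/p)^\perf)$.

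Since $(R/p)^\perf$ is discrete and $p$-perfect, each of $\Map(M,(R/p)^\perf)$, $\Map(M_\perf,(R/p)^\perf)$, and $\Map(\pi_0 M,(R/p)^\perf)$ canonically reduces to $\Hom_{\mathrm{CMon}}(\pi_0 M,(R/p)^\perf)$: here I use that $\pi_0$ commutes with filtered colimits, that $\pi_0 M_\perf \simeq (\pi_0 M)_\perf$ since $\pi_0$ is a left adjoint, and that maps from any commutative monoid into a $p$-perfect target factor uniquely through the $p$-perfection. The comparison maps are the obvious ones, and Yoneda concludes the proof.

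I expect the main obstacle to be the middle case $N=M_\perf$, since $\Omega^\infty R$ itself is not $p$-perfect. The naturality identity $\varphi_p^{*}=\varphi_{p,*}$ is precisely the mechanism that transports invertibility of $\varphi_p$ on the source $M_\perf$ to invertibility on the target mapping space, thereby licensing the collapse of the Frobenius tower on $\Omega^\infty R$.
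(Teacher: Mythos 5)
Your proposal is correct and follows essentially the same route as the paper: both arguments reduce to the $\bS[-]_p^\wedge \dashv \Omega^\infty$ adjunction, exploit the naturality identity $\varphi_p^* = \varphi_{p,*}$ to collapse the relevant Frobenius towers onto $(\Omega^\infty R)^{[1/p]}$, and then invoke Proposition~\ref{prop:discrete} to reduce everything to discrete commutative monoid maps out of $\pi_0 M$. Your write-up is somewhat more explicit than the paper's (which states the adjunction-with-telescope identification $\Map(\bS[N],R)\we\Map(N,(\Omega^\infty R)^{[1/p]})$ without unwinding the three cases), and the one slightly glib step --- the commutation $\pi_0 M_\perf \we (\pi_0 M)_\perf$, which requires observing that the \emph{right} adjoints of the two composites agree rather than just that $\pi_0$ is a left adjoint --- is nonetheless true and easy to justify.
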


\begin{proof}
    The factorization of $M[1/p]\rightarrow(\pi_0M)[1/p]$ through $M[1/p]\rightarrow M_\perf$
    exists because $M[1/p]\rightarrow M_\perf$ induces an isomorphism on
    $\pi_0$. Indeed, in discrete commutative monoids, the perfection is computed via the telescope.
    For any $p$-complete $\bE_\infty$-ring $R$, we have $\Map_{\CAlg_\bS^{\bE_\infty}}(\bS[N],R)\we\Map_{\Sscr^{\bE_\infty}}(N,(\Omega^\infty R)^{[1/p]})$ for $N\in\{M[1/p],M_\perf,(\pi_0M)_\perf\}$.
    As $(\Omega^\infty R)^{[1/p]}$ is discrete by Proposition~\ref{prop:discrete} and as the maps $M[1/p]\rightarrow
    M_\perf\rightarrow(\pi_0M)[1/p]$ induce isomorphisms on $\pi_0$, the result follows.
\end{proof}

\begin{remark}
    In particular, if $M$ is $p$-perfect, then $\bS[M]_p^\wedge\rightarrow\bS[\pi_0M]_p^\wedge$ is
    an equivalence.
\end{remark}

\begin{remark}[Pitfalls]\label{rem:pitfalls}
    In the previous version of this paper, I made the incorrect claim that $M[1/p]$ gives a
    construction of the $p$-perfection of an $\bE_\infty$-monoid. This does not work, as pointed
    out to me by Maxime Ramzi and Mura Yakerson. They suggested
    considering the example of $\Fin^\we$, the groupoid of finite sets and bijections.
    The multiplication-by-$2$ map gives, on components, maps
    $\B\Sigma_n\rightarrow\B\Sigma_n\times\B\Sigma_n\rightarrow\B\Sigma_{2n}$ sending a cycle
    $\sigma$ to the ``block cycle'' $$\begin{pmatrix}\sigma&0\\0&\sigma\end{pmatrix}.$$
    This assignment is not equivalent to
    $$\sigma\mapsto\begin{pmatrix}\sigma^2&0\\0&1\end{pmatrix},$$ which is what one wants to
    say in order to witness the map being the square of another map. However,
    $\begin{pmatrix}\sigma&0\\0&1\end{pmatrix}$ is conjugate to
    $\begin{pmatrix}1&0\\0&\sigma\end{pmatrix}$ in $\Sigma_{2n}$, so
    the issue disappears in the case of $\Omega^\infty R$ as it has, in particular,
    abelian fundamental groups.
\end{remark}

\begin{example}
    Let $F$ be the free $\bE_\infty$-space on a point and let $F[1/p]$ be its $p$-telescope. Note that
    $\pi_0 F[1/p]\we\bZ[\tfrac{1}{p}]_{\geq 0}$. The natural map
    $\bS[F[1/p]]_p^\wedge\rightarrow\bS[\bZ[\tfrac{1}{p}]_{\geq 0}]_p^\wedge$ is an equivalence by
    Corollary~\ref{cor:collapse}.
    We write $\bS[F[1/p]]$ as $\bS\{t^{1/p^\infty}\}$ and $\bS[\bZ[\tfrac{1}{p}]_{\geq 0}]$ as
    $\bS[t^{1/p^\infty}]$.
\end{example}

\begin{corollary}\label{cor:cotangent}
    For any $\bE_\infty$-ring, the $\bE_\infty$-cotangent complex $\L_{R[t^{1/p^\infty}]/R}^{\bE_\infty}$
    vanishes after $p$-completion.
\end{corollary}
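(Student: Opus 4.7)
The plan is to reduce to the case $R = \bS$ via base change of the cotangent complex, then use Corollary~\ref{cor:collapse} to replace the spherical monoid ring $\bS[t^{1/p^\infty}]$ by the $p$-completely equivalent free-$\bE_\infty$-algebra $\bS\{t^{1/p^\infty}\}$, and finally compute the cotangent complex of the latter as a sequential colimit whose transition maps are divisible by $p$.

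First, since $R[t^{1/p^\infty}] \we R \otimes_\bS \bS[t^{1/p^\infty}]$, the base change formula for the $\bE_\infty$-cotangent complex gives
$$\L^{\bE_\infty}_{R[t^{1/p^\infty}]/R} \we R[t^{1/p^\infty}] \otimes_{\bS[t^{1/p^\infty}]} \L^{\bE_\infty}_{\bS[t^{1/p^\infty}]/\bS},$$
so it suffices to show that $\L^{\bE_\infty}_{\bS[t^{1/p^\infty}]/\bS}$ has vanishing $p$-completion. Writing $\bS\{t^{1/p^\infty}\} \we \colim_n \bS\{t_n\}$, where each $\bS\{t_n\}$ is the free $\bE_\infty$-algebra on one generator and the transition sends $t_n \mapsto t_{n+1}^p$, I use that $\L^{\bE_\infty}_{\bS\{t_n\}/\bS} \we \bS\{t_n\} \cdot dt_n$ is free of rank one. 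Filtered-colimit compatibility of the cotangent complex yields
$$\L^{\bE_\infty}_{\bS\{t^{1/p^\infty}\}/\bS} \we \colim_n \bS\{t^{1/p^\infty}\} \cdot dt_n,$$
with transition $dt_n \mapsto d(t_{n+1}^p) = p\, t_{n+1}^{p-1}\, dt_{n+1}$. Since every transition is divisible by $p$, the sequential colimit becomes null after reducing modulo $p$, so $\L^{\bE_\infty}_{\bS\{t^{1/p^\infty}\}/\bS} \otimes \bS/p \we 0$, and hence its $p$-completion vanishes.

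To transfer this vanishing back to $\bS[t^{1/p^\infty}]$, I would apply the transitivity cofiber sequence associated to $\bS \to \bS\{t^{1/p^\infty}\} \to \bS[t^{1/p^\infty}]$. The base-changed first term has $p$-completely trivial cotangent complex by the previous paragraph, and the relative term $\L^{\bE_\infty}_{\bS[t^{1/p^\infty}]/\bS\{t^{1/p^\infty}\}}$ has $p$-completely trivial cotangent complex because any $p$-complete derivation of $\bS[t^{1/p^\infty}]$ over $\bS\{t^{1/p^\infty}\}$ valued in a $p$-complete module factors through the common $p$-completion $\bS[t^{1/p^\infty}]_p^\wedge \we \bS\{t^{1/p^\infty}\}_p^\wedge$ (Corollary~\ref{cor:collapse}), on which it becomes identically zero.

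The step I expect to be the main obstacle is the final transfer: cleanly dealing with the fact that $\bS\{t^{1/p^\infty}\} \to \bS[t^{1/p^\infty}]$ is only a $p$-complete equivalence and not an equivalence on the nose, so the uncompleted cotangent complexes genuinely differ but their $p$-completions do not. The derivation-theoretic argument handles this; alternatively, one could work entirely in the $p$-complete world from the start, arguing with the derived $p$-complete cotangent complex of the common $p$-completion over $\bS_p$.
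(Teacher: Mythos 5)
Your proof is correct, and it takes a somewhat different route from the paper's, so let me compare.

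The paper's proof is terse: it reduces by base change to $R$ connective, observes that $\L^{\bE_\infty}_{R\{t^{1/p^\infty}\}/R}$ has Tor-amplitude in $[0,0]$ (being a filtered colimit of free rank-one modules), and then identifies $\pi_0(\L^{\bE_\infty}_{R[t^{1/p^\infty}]/R}/p)$ with $\Omega^1_{(\pi_0(R)/p)[t^{1/p^\infty}]/(\pi_0(R)/p)}$, which vanishes by relative perfectness. Implicit here is the identification of $\L_{R[t^{1/p^\infty}]/R}/p$ with $\L_{R\{t^{1/p^\infty}\}/R}/p$, via the mod-$p$ equivalence $R\{t^{1/p^\infty}\}/p\we R[t^{1/p^\infty}]/p$ coming from Corollary~\ref{cor:collapse}, and the Tor-amplitude observation is what promotes the vanishing of $\pi_0$ to vanishing of the whole mod-$p$ cotangent complex. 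The essence is the same as your computation: relative perfectness of $S[t^{1/p^\infty}]/S$ for $S$ an $\bF_p$-algebra boils down to $dt^{1/p^n}=p(t^{1/p^{n+1}})^{p-1}dt^{1/p^{n+1}}=0$, which is your transition-map-divisible-by-$p$ observation.

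Your version makes the colimit structure and the Leibniz relation $d(t_{n+1}^p)=pt_{n+1}^{p-1}\,dt_{n+1}$ explicit, which nicely sidesteps the Tor-amplitude bookkeeping: since $\L^{\bE_\infty}_{\bS\{t^{1/p^\infty}\}/\bS}$ is exhibited as a sequential colimit with transitions divisible by $p$, its mod-$p$ reduction is manifestly zero. One small but worthwhile point: the reason that the transition $\L_{\bS\{t_n\}/\bS}\otimes_{\bS\{t_n\}}\bS\{t_{n+1}\}\to\L_{\bS\{t_{n+1}\}/\bS}$ really is multiplication by $pt_{n+1}^{p-1}$, with no hidden higher-homotopy corrections, is that it is a map of free rank-one $\bS\{t_{n+1}\}$-modules and hence is determined up to homotopy by a class in $\pi_0\bS\{t_{n+1}\}\iso\bZ[t_{n+1}]$, and that class is fixed by the compatibility with $\pi_0$ and the ordinary Leibniz rule.

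For the transfer to the monoid ring $\bS[t^{1/p^\infty}]$, your transitivity-cofiber-sequence argument works, but the derivation-theoretic treatment of the relative term is more circuitous than needed. The cleanest route avoids the cofiber sequence entirely: since a $p$-complete equivalence induces a derived mod-$p$ equivalence, Corollary~\ref{cor:collapse} gives $\bS\{t^{1/p^\infty}\}/p\we\bS[t^{1/p^\infty}]/p$, and then base change gives
$$\L^{\bE_\infty}_{\bS[t^{1/p^\infty}]/\bS}/p\we\L^{\bE_\infty}_{(\bS[t^{1/p^\infty}]/p)/(\bS/p)}\we\L^{\bE_\infty}_{(\bS\{t^{1/p^\infty}\}/p)/(\bS/p)}\we\L^{\bE_\infty}_{\bS\{t^{1/p^\infty}\}/\bS}/p\we 0,$$
so the $p$-completion vanishes directly. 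This is also what the paper does implicitly, and it is essentially the ``alternatively'' you suggest in your last sentence; I would promote it to the main argument.
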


\begin{proof}
    We can reduce by base change for the cotangent complex to the case of $R$ connective, in which case
    the cotangent complex of $R\{t^{1/p^\infty}\}$ has Tor-amplitude in $[0,0]$. On the other hand,
    $$\pi_0\left(\L_{R[t^{1/p^\infty}]/R}^{\bE_\infty}/p\right)\iso\Omega^1_{((\pi_0(R)/p)[t^{1/p^\infty}])/(\pi_0(R)/p)},$$
    which vanishes by relative perfectness.
\end{proof}

\begin{lemma}\label{lem:main_lemma}
    The $p$-complete $\bE_\infty$-algebra $\bS\{t^{1/p^\infty}\}_p^\wedge\we\bS[t^{1/p^\infty}]_p^\wedge$ corepresents a
    limit-preserving functor
    $$\CAlg_{\bF_p}^\perf\leftarrow\CAlg_{\bS}^{\cn,\wedge}\rcolon\bA^{1,\perf,\bS\bW}$$
    from connective $p$-complete $\bE_\infty$-algebras to the $1$-category of perfect
    $\bF_p$-algebras given by the formula $\bA^{1,\perf,\bS\bW}(B)=(\pi_0(B)/p)^\perf$.
\end{lemma}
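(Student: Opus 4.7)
The plan is to reduce the computation to Proposition~\ref{prop:discrete} via the standard adjunction between spherical monoid rings and infinite loop spaces. First, I would invoke the preceding example and Corollary~\ref{cor:collapse} to identify $\bS[t^{1/p^\infty}]_p^\wedge$ with $\bS[F[1/p]]_p^\wedge$, where $F$ is the free $\bE_\infty$-space on a point. This is the crucial move: writing the corepresenting object as the spherical monoid ring on a $p$-forward-telescope of a free $\bE_\infty$-space lets us translate mapping spaces into a tower involving $\varphi_p$.

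Next, for $B \in \CAlg_{\bS}^{\cn,\wedge}$, I would chase adjunctions:
$$\Map_{\CAlg_{\bS}^{\bE_\infty,\wedge}}(\bS[F[1/p]]_p^\wedge, B) \we \Map_{\Sscr^{\bE_\infty}}(F[1/p], \Omega^\infty B) \we \lim_{\varphi_p^*} \Omega^\infty B = (\Omega^\infty B)^{[1/p]},$$
where the first equivalence uses the universal property of $p$-completion together with the $(\bS[-], \Omega^\infty)$-adjunction, the second uses that $F[1/p]$ is a sequential colimit under $\varphi_p$ and that $F$ is free on a point. Then I would invoke Proposition~\ref{prop:discrete} to conclude that this inverse limit is discrete with $\pi_0 \iso (\pi_0(B)/p)^\perf$, yielding the claimed formula $\bA^{1,\perf,\bS\bW}(B) \we (\pi_0(B)/p)^\perf$.

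For the limit-preservation assertion, the functor $\Map(\bS[t^{1/p^\infty}]_p^\wedge,-)$ from $\CAlg_{\bS}^{\cn,\wedge}$ to spaces manifestly preserves limits. Because the previous step shows each mapping space is discrete and identified with a perfect $\bF_p$-algebra, and because limits in $\CAlg_{\bF_p}^\perf$ are computed on underlying sets (the Frobenius being an isomorphism is preserved under limits), this limit of spaces coincides with the limit in $\CAlg_{\bF_p}^\perf$. Hence $\bA^{1,\perf,\bS\bW}$ is limit-preserving as a functor to the $1$-category of perfect $\bF_p$-algebras.

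There is no real obstacle here since the main technical input, the discreteness of $(\Omega^\infty B)^{[1/p]}$ for $p$-complete $B$, is already Proposition~\ref{prop:discrete}. The only subtlety worth flagging explicitly is that one must be careful \emph{not} to confuse the forward telescope $F[1/p]$ (used inside the spherical monoid ring on the source side) with the inverse telescope $(\Omega^\infty B)^{[1/p]}$ (produced on the target side by mapping out of the colimit); that opposite variance is precisely what converts the example $\bS[t^{1/p^\infty}]_p^\wedge$ into the perfection functor.
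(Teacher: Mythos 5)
Your proposal is correct and follows essentially the same route as the paper: identify $\bS[t^{1/p^\infty}]_p^\wedge$ with $\bS[F[1/p]]_p^\wedge$ via Corollary~\ref{cor:collapse}, chase the $(\bS[-],\Omega^\infty)$ adjunction to get $(\Omega^\infty B)^{[1/p]}$, and invoke Proposition~\ref{prop:discrete}. The paper's proof is terser but uses exactly these three ingredients; your extra remark that precomposition by $\varphi_p$ on $F$ corresponds to $\varphi_p$ on $\Omega^\infty B$ (so that the inverse tower really is the one in Proposition~\ref{prop:discrete}) is the only nontrivial detail being filled in, and it is correct.
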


\begin{proof}
    Using the $p$-adic equivalence $\bS\{t^{1/p^\infty}\}_p^\wedge\we\bS[t^{1/p^\infty}]_p^\wedge$
    we see that $\Map(\bS[t^{1/p^\infty}]_p^\wedge,B)\we(\Omega^\infty B)^{[1/p]}$. This
    was computed in Proposition~\ref{prop:discrete}.
    Limit preservation
    is automatic as we map out of a $p$-complete $\bE_\infty$-algebra.
\end{proof}

\begin{notation}
    We also write $(-)^\flat$ for $\bA^{1,\perf,\bS\bW}$.
\end{notation}

\begin{definition}[Spherical Witt vectors]
    The spherical Witt vectors functor
    $\bS\bW\colon\CAlg_{\bF_p}^\perf\rightarrow\CAlg_{\bS}^{\cn,\wedge}$ is the left adjoint of the
    functor $\bA^{1,\perf,\bS\bW}$. By construction, the ring of functions on the transmutation of $\Spec C$
    with respect to $\bA^{1,\perf,\bS\bW}$ is $\bS\bW(C)$. More generally, if $X$ is a prestack on
    $\CAlg_{\bF_p}^\perf$, we denote by $X^{\bS\bW}$ its transmutation, which is a prestack on
    $\CAlg_{\bS_p}^{\cn,\wedge}$.
\end{definition}

\begin{remark}
    We could just as well consider $\bS\bW$ as taking values in $\CAlg_{\bS}^\wedge$, but no
    additional generality is gained as the natural map $(\tau_{\geq 0}R)^\flat\rightarrow R^\flat$
    is an equivalence for all $R\in\CAlg_{\bS}^\wedge$.
\end{remark}

To justify the terminology, we verify the following facts about $\bS\bW$.

\begin{proposition}\label{prop:homology_of_sw}
    Let $k$ be a perfect $\bF_p$-algebra and let $\bS\bW(k)$ be the spherical Witt vectors of $k$.
    \begin{enumerate}
        \item[{\em (a)}] The $\bE_\infty$-ring $\bS\bW(k)$ is $p$-completely flat over $\bS_p$.
        \item[{\em (b)}] There is a natural equivalence $\pi_0\bS\bW(k)\iso\bW(k)$, where $\bW(k)$
            denotes the ring of $p$-typical Witt vectors.
        \item[{\em (c)}] There is a natural equivalence $\bS\bW(k)\otimes_{\bS_p}\bF_p\we k$.
        \item[{\em (d)}] There is a natural equivalence $(\bS\bW(k)\otimes_{\bS_p}\bZ_p)_p^\wedge\we\bW(k)$.
    \end{enumerate}
\end{proposition}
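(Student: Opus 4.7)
My plan is to prove (c) first; parts (a), (b), and (d) will then follow formally from the uniqueness of $p$-torsion-free flat lifts of perfect $\bF_p$-algebras (Witt-vector uniqueness).

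For (c), both $\bS\bW$ and $-\otimes_{\bS_p}\bF_p$ preserve colimits (the former as a left adjoint by Lemma~\ref{lem:main_lemma}, the latter by extension of scalars), so the composite $k\mapsto\bS\bW(k)\otimes_{\bS_p}\bF_p$ is a colimit-preserving functor from $\CAlg_{\bF_p}^\perf$ to $\CAlg_{\bF_p}$. The forgetful functor $\CAlg_{\bF_p}^\perf\to\Set$ is monadic with left adjoint $S\mapsto\bF_p[S^{1/p^\infty}]$, so every perfect $\bF_p$-algebra $k$ is naturally the geometric realization of its split bar resolution, whose terms are free perfect $\bF_p$-algebras. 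It therefore suffices to establish (c) for free objects. On such a free object, coproduct-preservation of $\bS\bW$ combined with Lemma~\ref{lem:main_lemma} yields $\bS\bW(\bF_p[S^{1/p^\infty}])\simeq\bS[S^{1/p^\infty}]_p^\wedge$, and tensoring with $\bF_p$ over $\bS_p$ kills the $p$-completion to recover $\bF_p[S^{1/p^\infty}]$.

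Part (a) is then immediate: a connective $p$-complete $\bE_\infty$-$\bS_p$-algebra is $p$-completely flat precisely when its mod-$p$ reduction is concentrated in degree zero, and by (c) that reduction is the discrete ring $k$. For (d), $p$-complete flatness implies that $(\bS\bW(k)\otimes_{\bS_p}\bZ_p)_p^\wedge$ is a $p$-complete, $p$-torsion-free, $p$-adically flat $\bZ_p$-algebra with mod-$p$ reduction $k$; such a lift is unique and hence equals $\bW(k)$. Part (b) follows similarly from inspecting $\pi_0$, which by the same flatness argument is a classically $p$-complete, $p$-torsion-free lift of $k$ to $\bZ_p$, forcing $\pi_0\bS\bW(k)\cong\bW(k)$.

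The main difficulty I anticipate is in making the $\infty$-categorical colimit argument for (c) fully rigorous, particularly in confirming that the bar-resolution computation in $\CAlg_{\bF_p}^\perf$ is transported correctly by $\bS\bW$ into $\CAlg_{\bS_p}^{\cn,\wedge}$ and then by $-\otimes_{\bS_p}\bF_p$. The sanity check is that the bar resolution is split in $\CAlg_{\bF_p}^\perf$ via the usual extra degeneracies, so its geometric realization is computed by the augmentation in any $\infty$-category; the left adjoint $\bS\bW$ transports this split colimit to $\CAlg_{\bS_p}^{\cn,\wedge}$, and the subsequent tensor down to $\bF_p$ annihilates any $p$-completion introduced when forming coproducts, leaving the discrete colimit that recovers $k$.
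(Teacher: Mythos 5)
There is a genuine gap in the argument for part (c), centered on the claim that ``the bar resolution is split in $\CAlg_{\bF_p}^\perf$ via the usual extra degeneracies, so its geometric realization is computed by the augmentation in any $\infty$-category.'' This is false. The bar resolution $\mathrm{Bar}_\bullet(T,T,k)$ for the free--forgetful adjunction between $\Set$ and $\CAlg_{\bF_p}^\perf$ is split only \emph{after} applying the forgetful functor to $\Set$: the extra degeneracies are given by the unit of the monad, which is a map of sets but emphatically not a homomorphism of perfect $\bF_p$-algebras. It is therefore not a split simplicial object in $\CAlg_{\bF_p}^\perf$, and the reasoning that any functor out of $\CAlg_{\bF_p}^\perf$ preserves its realization collapses. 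What is true --- by monadicity --- is that the colimit of $\mathrm{Bar}_\bullet$ in the $1$-category $\CAlg_{\bF_p}^\perf$ is $k$; but once you view the same simplicial diagram of discrete rings in $\CAlg_{\bF_p}^{\cn}$, there is a priori no reason its geometric realization should again be discrete rather than acquiring higher homotopy. This is exactly the nontrivial point your argument skips.

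Concretely: after the reduction to free perfect rings (which is correct and parallels the paper), what you must show is that the colimit-preserving functor $\bS\bW(-)\otimes_{\bS_p}\bF_p\colon\CAlg_{\bF_p}^\perf\to\CAlg_{\bF_p}^\cn$ takes values in discrete rings. Since this functor agrees with the inclusion on free perfect rings and preserves colimits, this amounts to showing that perfect $\bF_p$-algebras are closed under (sifted) colimits in $\CAlg_{\bF_p}^\cn$. The paper establishes this in Lemma~\ref{lem:animated} (and the preceding discussion) by invoking the nontrivial facts that perfect animated commutative $\bF_p$-algebras are discrete (\cite[Prop.~11.6]{bhatt-scholze-witt}) and that $\CAlg_{\bF_p}^\perf\to\DAlg_{\bF_p}^\cn$ preserves coproducts (\cite[Lem.~3.16]{bhatt-scholze-witt}). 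Your phrase about the tensor ``annihilat[ing] any $p$-completion introduced when forming coproducts'' misdiagnoses the problem: the $p$-completion is harmless, but potential higher homotopy in the realization of a simplicial discrete ring is not, and ruling it out requires exactly the input the paper supplies. The deductions of (a), (b), and (d) from (c) are otherwise fine in spirit, though they implicitly use uniqueness of strict $p$-torsion-free lifts of perfect $\bF_p$-algebras, which is worth stating.
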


First, we need a lemma.

\begin{lemma}\label{lem:animated}
    The $1$-category $\CAlg_{\bF_p}^\perf$ is the animation of its full subcategory consisting of perfections of
    finitely presented polynomial $\bF_p$-algebras.
\end{lemma}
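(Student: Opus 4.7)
The plan is to invoke the recognition theorem identifying an $\infty$-category with the free sifted cocompletion $\P_\Sigma(\mathcal{C}_0)$ of a small full subcategory $\mathcal{C}_0$. Writing $\mathcal{C}_0 \subseteq \CAlg_{\bF_p}^\perf$ for the full subcategory spanned by the perfections $\bF_p[x_1,\ldots,x_n]^\perf$, one must verify: that $\CAlg_{\bF_p}^\perf$ admits sifted colimits (immediate, as it is reflective in $\CAlg_{\bF_p}$ via perfection); that $\mathcal{C}_0$ is closed under finite coproducts in $\CAlg_{\bF_p}^\perf$ (the coproduct of $\bF_p[x_1,\ldots,x_n]^\perf$ and $\bF_p[y_1,\ldots,y_m]^\perf$ is $\bF_p[x_1,\ldots,x_n,y_1,\ldots,y_m]^\perf$); that its objects are compact projective; and that $\mathcal{C}_0$ generates $\CAlg_{\bF_p}^\perf$ under sifted colimits.

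The crux will be compact projectivity. By adjunction with perfection, $\Map_{\CAlg_{\bF_p}^\perf}(\bF_p[x_1,\ldots,x_n]^\perf, R) \cong R^n$ as a set, so one needs the forgetful functor $U \colon \CAlg_{\bF_p}^\perf \to \mathrm{Set}$ to preserve sifted colimits. Filtered colimits pose no difficulty since filtered colimits of perfect $\bF_p$-algebras are perfect and are computed on underlying sets. The key lemma I would prove for reflexive coequalizers is: given any pair $f, g \colon A \rightrightarrows B$ in $\CAlg_{\bF_p}^\perf$, the quotient $B/I$ with $I = (f(a) - g(a) : a \in A)$ is already perfect. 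Combined with the classical fact that in any variety of algebras the forgetful functor to $\mathrm{Set}$ preserves reflexive coequalizers, this shows that reflexive coequalizers in $\CAlg_{\bF_p}^\perf$ coincide with those in $\CAlg_{\bF_p}$ and are computed on underlying sets. The perfectness of $B/I$ is a short freshman's dream calculation: given $c \in B$ with $c^p = \sum_i c_i(f(a_i) - g(a_i))$, use perfection of $A$ and $B$ to write $c_i = (c_i')^p$ and $a_i = (a_i')^p$; in characteristic $p$ each summand equals $(c_i'(f(a_i') - g(a_i')))^p$, so $c^p = (\sum_i d_i)^p$ with $d_i = c_i'(f(a_i') - g(a_i')) \in I$, and injectivity of Frobenius on $B$ forces $c = \sum_i d_i \in I$.

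For generation, any perfect $\bF_p$-algebra $R$ is a filtered colimit of its finitely generated perfect subalgebras, each of which is a quotient $\bF_p[x_1,\ldots,x_n]^\perf / J$ for some (necessarily perfect) ideal $J$; writing $J$ as the filtered union of its finitely generated subideals $J'$ and applying perfection, this quotient is in turn the filtered colimit in $\CAlg_{\bF_p}^\perf$ of $(\bF_p[x_1,\ldots,x_n]^\perf / J')^\perf$, each of which is visibly a reflexive coequalizer of the pair $\bF_p[x_1,\ldots,x_n,y_1,\ldots,y_m]^\perf \rightrightarrows \bF_p[x_1,\ldots,x_n]^\perf$ sending each $y_j$ either to $0$ or to a generator of $J'$, with obvious section. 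The main obstacle is the compact projectivity check; the freshman's dream calculation is what guarantees that the $\infty$-categorical animation of $\mathcal{C}_0$ acquires no derived higher homotopy and remains equivalent to the $1$-category $\CAlg_{\bF_p}^\perf$.
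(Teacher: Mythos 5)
Your $1$-categorical checks are correct: the freshman's-dream lemma showing that $B/I$ is already perfect is right, it does show reflexive coequalizers in $\CAlg_{\bF_p}^\perf$ agree with those in $\CAlg_{\bF_p}$ and are computed on underlying sets, and the generation argument is fine. But the assertion in your final sentence --- that this computation ``guarantees that the $\infty$-categorical animation of $\Cscr_0$ acquires no derived higher homotopy'' --- is precisely the gap, and it does not follow from anything you have established. What you have shown is that $\CAlg_{\bF_p}^\perf$ is the $1$-categorical sifted cocompletion of $\Cscr_0$, i.e.\ its Lawvere theory presentation; this is strictly weaker than being the animation $\Pscr_\Sigma(\Cscr_0)$. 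The recognition theorem (HTT 5.5.8.22) requires $\infty$-categorical compact projectivity: $\Map(C,-)\colon\CAlg_{\bF_p}^\perf\to\Sscr$ must carry geometric realizations of simplicial objects to colimits \emph{in spaces}, and a $\Delta^{\op}$-indexed diagram of sets generally realizes to a non-discrete space, which nothing in your argument rules out. An instructive comparison: the one-point set is $1$-categorically compact projective in $\mathrm{Set}$ and finite sets generate $\mathrm{Set}$ under sifted colimits, yet $\Pscr_\Sigma(\Fin)\simeq\Sscr\neq\mathrm{Set}$.

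The missing ingredient is exactly the theorem of Bhatt--Scholze (cited in the paper as \cite[Prop.~11.6]{bhatt-scholze-witt}) that perfect \emph{animated} commutative $\bF_p$-algebras are discrete; the proof is genuinely homotopical (the Frobenius on a perfect animated $\bF_p$-algebra is a $p$-fold multiplication, hence zero on higher homotopy groups, yet also invertible, so the higher homotopy vanishes). The paper's proof imports this fact, works inside $\DAlg_{\bF_p}^{\cn}$, and transports compact projectivity across the adjunction $(-)_\perf \dashv \text{incl.}$ rather than verifying it by hand in the $1$-category. To repair your argument you would either need to invoke that discreteness result too --- at which point your proof essentially becomes the paper's --- or else prove directly that for every simplicial object $R_\bullet$ in $\CAlg_{\bF_p}^\perf$ the underlying simplicial set is weakly discrete, which your $1$-categorical coequalizer lemma does not address.
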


\begin{proof}
    Let $\DAlg_{\bF_p}^\cn$ denote the $\infty$-category of animated commutative $\bF_p$-algebras and
    let $\DAlg_{\bF_p}^{\cn,\perf}\subseteq\DAlg_{\bF_p}^\cn$ be the full subcategory of animated
    commutative $\bF_p$-algebras where Frobenius is an equivalence. It is a standard fact that
    $\pi_0\colon\DAlg_{\bF_p}^{\cn,\perf}\rightarrow\CAlg_{\bF_p}^\perf$ is an equivalence;
    see~\cite[Prop.~11.6]{bhatt-scholze-witt}.\footnote{For example, consider $\Omega^\infty R$
    as an $\bE_\infty$-space with respect to multiplication. The Frobenius on $R$ induces an
    endomorphism of $\Omega^\infty R$ which agrees (by comparison between simplicial commutative
    rings and simplicial commutative monoids) with the $\bE_\infty$-space Frobenius
    $\varphi_p$ of Definition~\ref{def:pperfect}. For a perfect animated commutative
    $\bF_p$-algebra, this endomorphism is an equivalence on the one hand, but it is zero
    in positive degrees on the other. Thus, $\pi_iR=0$ for $i>0$.} The resulting forgetful
    functor $\CAlg_{\bF_p}^\perf\rightarrow\DAlg_{\bF_p}^\cn$ preserves all limits and also all sifted
    colimits. But, it also preserves all coproducts by~\cite[Lem.~3.16]{bhatt-scholze-witt}, so the
    forgetful functor preserves all colimits. The
    left and right adjoints are given by the colimit and limit perfections, $(-)_\perf$ and
    $(-)^\perf$, respectively. In
    particular, it follows that $(-)_\perf\colon\DAlg_{\bF_p}^\cn\rightarrow\CAlg_{\bF_p}^\perf$
    preserves compact projective objects (and is a localization).
    Denote by $\Cscr\subseteq\CAlg_{\bF_p}^\perf$ the full subcategory of $\CAlg_{\bF_p}^\perf$
    consisting of perfections of finitely presented polynomial $\bF_p$-algebras and consider the
    functor $\Pscr_\Sigma(\Cscr)\rightarrow\CAlg_{\bF_p}^\perf$ induced by inclusion of $\Cscr$ into
    $\CAlg_{\bF_p}^\perf$. As
    $\Cscr\rightarrow\CAlg_{\bF_p}^\perf$ is fully faithful and the essential image consists of
    compact projective objects, it follows by~\cite[Prop.~5.5.8.22]{htt} that
    $\Pscr_\Sigma(\Cscr)\rightarrow\CAlg_{\bF_p}^\perf$ is fully faithful. It is also essentially
    surjective: by taking a perfect commutative $\bF_p$-algebra $R$ and a simplicial commutative
    polynomial ring $S_\bullet$ such that $|S_\bullet|\we R$, it follows that
    $S_{\bullet,\perf}$ is simplicial perfect polynomial and $|S_{\bullet,\perf}|\we R$.
\end{proof}

\begin{remark}
    The motto here is that animated perfect commutative $\bF_p$-algebras are perfect animated
    commutative $\bF_p$-algebras are perfect commutative $\bF_p$-algebras.
    In other familiar examples, like animated abelian groups or animated commutative rings, the
    animations are $\infty$-categories which are not $1$-categories: the mapping spaces are not
    $0$-truncated. For the animation to be discrete is a very special occurrence which has to do
    with the fact that the finitely presented polynomial rings are compact projective in the
    $1$-category $\CAlg_{\bF_p}^\perf$.
\end{remark}

\begin{proof}[Proof of Proposition~\ref{prop:homology_of_sw}]
    Consider the composite functor
    $$\CAlg_{\bF_p}^\perf\xrightarrow{\bS\bW}\CAlg_{\bS}^{\cn,\wedge}\xrightarrow{(-)\otimes_{\bS}\bF_p}\CAlg_{\bF_p}^{\cn}.$$
    It is enough
    to show that this functor takes values in (discrete) perfect commutative $\bF_p$-algebras. The
    total right adjoint sends a connective $\bE_\infty$-$\bF_p$-algebra $B$ to $(\pi_0B)^\perf$. By
    construction, the left adjoint sends $\bF_p[t^{1/p^\infty}]$ to $\bF_p[t^{1/p^\infty}]$ and
    similarly for perfections polynomial rings over $\bF_p$ on finitely many variables.
    To conclude, it is enough by Lemma~\ref{lem:animated} to show that perfect $\bF_p$-algebras are closed under colimits
    in $\CAlg_{\bF_p}^\cn$. For this, we can use the fact that
    the forgetful functor from animated commutative $\bF_p$-algebras
    to $\bE_\infty$-algebras over $\bF_p$ preserves all colimits (see~\cite[Prop.~4.2.27]{raksit})
    as well as the fact that $\CAlg_{\bF_p}^\perf\rightarrow\DAlg_{\bF_p}^{\cn}$ preserves colimits, used
    in the proof of Lemma~\ref{lem:animated}.
\end{proof}

\begin{corollary}\label{cor:sw_fully_faithful}
    The functor $\bS\bW$ is fully faithful.
\end{corollary}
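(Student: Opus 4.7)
The plan is to use the standard criterion that a left adjoint $L \dashv R$ is fully faithful if and only if the unit $\mathrm{id} \Rightarrow R \circ L$ is an equivalence. Here $L = \bS\bW$ and $R = (-)^\flat$, so it suffices to show that for every perfect $\bF_p$-algebra $k$, the unit map
\[
k \longrightarrow \bS\bW(k)^\flat = \bigl(\pi_0 \bS\bW(k)/p\bigr)^\perf
\]
is an isomorphism in $\CAlg_{\bF_p}^\perf$.

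To compute the right-hand side, I would invoke Proposition~\ref{prop:homology_of_sw}(c), which gives a natural equivalence $\bS\bW(k) \otimes_{\bS_p} \bF_p \simeq k$ of connective $\bE_\infty$-$\bF_p$-algebras. Taking $\pi_0$ and using that $k$ is discrete yields $\pi_0(\bS\bW(k))/p \cong k$. Since $k$ is already perfect by hypothesis, applying the (inverse limit) perfection functor is idempotent on $k$, so $\bigl(\pi_0 \bS\bW(k)/p\bigr)^\perf \cong k^\perf \cong k$, and one checks that under this identification the unit map is the identity (by tracing through the universal property defining $\bS\bW$).

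There is essentially no obstacle here: the real work was already done in establishing Proposition~\ref{prop:homology_of_sw}(c), which identified $\bS\bW(k) \otimes_{\bS_p} \bF_p$ with $k$ itself rather than with some larger object. Once that is in hand, full faithfulness of $\bS\bW$ is a formal consequence, expressing that $(-)^\flat$ exhibits $\CAlg_{\bF_p}^\perf$ as a (reflective) localization of $\CAlg_{\bS}^{\cn,\wedge}$.
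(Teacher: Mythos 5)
Your argument is correct and is exactly the approach the paper takes: the paper's proof is the single sentence ``Indeed, the unit of the adjunction is an equivalence,'' and you have simply unwound what this means, deducing it from Proposition~\ref{prop:homology_of_sw}(c). The one step you flag but leave implicit---that the abstract isomorphism $(\bS\bW(k))^\flat\cong k$ coming from (c) really is realized by the unit---is the only place requiring care, and it does go through (for instance by factoring the unit of the composite adjunction $\CAlg_{\bF_p}^\perf\rightleftarrows\CAlg_{\bS}^{\cn,\wedge}\rightleftarrows\CAlg_{\bF_p}^{\cn}$ and observing that the second-stage unit becomes invertible after applying $(-)^\flat$).
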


\begin{proof}
    Indeed, the unit of the adjunction is an equivalence.
\end{proof}

\begin{remark}
    Consider the full subcategory of $p$-complete connective $\bE_\infty$-rings on
    $\bS[t^{1/p^\infty}]_p^\wedge$ and all of its finite coproducts. This $\infty$-category is
    equivalent to $\CAlg_{\bF_p}^{\perf,\omega}$. Another perspective on $\bS\bW$ is that
    it is the left Kan extension of the induced inclusion
    $\CAlg_{\bF_p}^{\perf,\omega}\rightarrow\CAlg_p^{\cn,\wedge}$ to $\CAlg_{\bF_p}^\perf$.
\end{remark}

\begin{corollary}\label{cor:cotangent_vanishing}
    If $C$ denotes a perfect $\bF_p$-algebra, then
    \begin{enumerate}
        \item[{\em (a)}] $\L_{C/\bF_p}^{\bE_\infty}\we 0$ and
        \item[{\em (b)}] $(\L_{\bS\bW(C)/\bS_p}^{\bE_\infty})_p^\wedge\we 0$.
    \end{enumerate}
\end{corollary}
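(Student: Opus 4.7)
The plan is to reduce both statements to the single-variable vanishing of Corollary~\ref{cor:cotangent} via Lemma~\ref{lem:animated} and base change. By Lemma~\ref{lem:animated}, any $C\in\CAlg_{\bF_p}^\perf$ is a sifted colimit of perfections $P_\alpha=\bF_p[t_1^{1/p^\infty},\ldots,t_{n_\alpha}^{1/p^\infty}]$ of finitely presented polynomial $\bF_p$-algebras, and this colimit is preserved by the forgetful functor into $\CAlg_{\bF_p}^{\bE_\infty,\cn}$ (this is the same colimit compatibility used in the proof of Proposition~\ref{prop:homology_of_sw}). Since $\bS\bW$ is a left adjoint, one has $\bS\bW(C)\we(\colim_\alpha\bS\bW(P_\alpha))_p^\wedge$ in $\CAlg_{\bS}^{\cn,\wedge}$, and by Lemma~\ref{lem:main_lemma} together with coproduct preservation of $\bS\bW$, we have $\bS\bW(P_\alpha)\we\bS[t_1^{1/p^\infty},\ldots,t_{n_\alpha}^{1/p^\infty}]_p^\wedge$.

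For (a), sifted-colimit compatibility of the $\bE_\infty$-cotangent complex gives
$$\L_{C/\bF_p}^{\bE_\infty}\we\colim_\alpha\left(\L_{P_\alpha/\bF_p}^{\bE_\infty}\otimes_{P_\alpha}C\right),$$
so it suffices to show $\L_{P_\alpha/\bF_p}^{\bE_\infty}\we 0$ for each $\alpha$. Applying base change and the transitivity triangle along the chain $\bF_p\to\bF_p[t_1^{1/p^\infty}]\to\cdots\to P_\alpha$ reduces this to the single-variable vanishing of $\L_{\bF_p[t^{1/p^\infty}]/\bF_p}^{\bE_\infty}$. Corollary~\ref{cor:cotangent} with $R=\bF_p$ gives this vanishing after $p$-completion, and since $\bF_p$-module spectra are already $p$-complete (for instance because $H\bF_p\otimes_\bS\bS[1/p]\we 0$), the cotangent complex vanishes outright.

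For (b), the same strategy applies after $p$-completing throughout. Since $p$-completion commutes with colimits (being a left adjoint) and sends $p$-completely zero spectra to zero, it suffices to prove $(\L_{\bS\bW(P_\alpha)/\bS_p}^{\bE_\infty})_p^\wedge\we 0$ for each $\alpha$. Iterated base change along the chain $\bS_p\to\bS[t_1^{1/p^\infty}]_p^\wedge\to\cdots\to\bS\bW(P_\alpha)$ reduces this to the statement of Corollary~\ref{cor:cotangent} with $R=\bS_p$. The only nontrivial point in the whole argument is the first step, namely verifying that the colimit decomposition of $C$ from Lemma~\ref{lem:animated} really is a colimit in $\CAlg_{\bF_p}^{\bE_\infty,\cn}$ (so that sifted-colimit compatibility of $\L^{\bE_\infty}$ applies); all subsequent steps are routine applications of base change.
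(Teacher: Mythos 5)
Your proof is correct and takes essentially the same route as the paper's one-line argument: the paper says the corollary holds for $C \iso \bF_p[t^{1/p^\infty}]$ by Corollary~\ref{cor:cotangent} and extends to all $C$ by closing up under sifted colimits. Your version merely spells out the details the paper leaves implicit — the multi-variable reduction via the transitivity triangle and base change, the observation that $\bF_p$-modules are automatically $p$-complete so that Corollary~\ref{cor:cotangent} gives vanishing outright in part (a), and the verification (correctly sourced from the proof of Proposition~\ref{prop:homology_of_sw}) that the colimit decomposition from Lemma~\ref{lem:animated} is preserved by the forgetful functor to $\bE_\infty$-rings.
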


\begin{proof}
    The corollary holds for $C\iso\bF_p[t^{1/p^\infty}]$ by Corollary~\ref{cor:cotangent} and hence for all
    $C$ by closing up under sifted colimits.
\end{proof}

\begin{corollary}[Recognition]
    A $p$-complete connective $\bE_\infty$-ring is in the essential image of $\bS\bW$ if and only if
    its $\bF_p$-homology is a discrete perfect $\bF_p$-algebra.
\end{corollary}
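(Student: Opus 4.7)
The plan is to combine the $(\bS\bW,(-)^\flat)$ adjunction with Proposition~\ref{prop:homology_of_sw}(c) and the full faithfulness of $\bS\bW$ from Corollary~\ref{cor:sw_fully_faithful}. One direction is immediate: if $R\simeq\bS\bW(k)$ for a perfect $\bF_p$-algebra $k$, then $R\otimes_{\bS_p}\bF_p\simeq k$ by Proposition~\ref{prop:homology_of_sw}(c), which is discrete and perfect by hypothesis on $k$.

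For the converse, suppose $R\in\CAlg_{\bS}^{\cn,\wedge}$ has $R\otimes_{\bS_p}\bF_p$ discrete and perfect. Since $R$ is connective and $p$-complete, $\pi_0(R\otimes_{\bS_p}\bF_p)=\pi_0(R)/p$, so under the hypothesis the $\bE_\infty$-algebra $R\otimes_{\bS_p}\bF_p$ is the discrete commutative ring $\pi_0(R)/p$, and this ring is already perfect. Thus
\[
R^\flat=(\pi_0(R)/p)^\perf=\pi_0(R)/p\simeq R\otimes_{\bS_p}\bF_p,
\]
and so it suffices to show the adjunction counit $\varepsilon\colon\bS\bW(R^\flat)\rightarrow R$ is an equivalence.

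To see this, I reduce mod $p$ and apply Proposition~\ref{prop:homology_of_sw}(c) to the source: the induced map
\[
\bar\varepsilon\colon\bS\bW(R^\flat)\otimes_{\bS_p}\bF_p\,\simeq\,R^\flat\,\longrightarrow\,R\otimes_{\bS_p}\bF_p\,\simeq\,R^\flat
\]
can be identified, via the naturality of the equivalence of Proposition~\ref{prop:homology_of_sw}(c) and the fact that $(-)^\flat$ factors through $(-)\otimes_{\bS_p}\bF_p$, with $(-)^\flat$ applied to $\varepsilon$. By one of the triangle identities for the adjunction, $(-)^\flat$ applied to the counit is inverse to the unit of $R^\flat$; but the unit is an equivalence by Corollary~\ref{cor:sw_fully_faithful}. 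Hence $\bar\varepsilon$ is an equivalence.

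Finally, both $\bS\bW(R^\flat)$ and $R$ are $p$-complete, hence so is the fiber of $\varepsilon$. A $p$-complete spectrum whose mod-$p$ reduction vanishes is itself zero, so $\varepsilon$ is an equivalence. The only nontrivial step is identifying the mod-$p$ reduction of $\varepsilon$ with the identity, which I expect to be a routine triangle-identity check once one makes explicit the identification of $(-)^\flat$ with $(-)\otimes_{\bS_p}\bF_p$ on the relevant objects.
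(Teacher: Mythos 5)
Your proof is correct and takes essentially the same approach as the paper: form the adjunction counit $\varepsilon\colon\bS\bW(R^\flat)\to R$, check that it is an $\bF_p$-homology equivalence, and conclude by connectivity and $p$-completeness. The paper's own argument is a two-sentence version of yours; it asserts that the hypothesis makes $\varepsilon$ an $\bF_p$-homology equivalence and stops there, whereas you unpack this via a triangle identity and full faithfulness, which is a reasonable way to spell out the step the paper leaves implicit. One small stylistic caution: your closing sentence ``a $p$-complete spectrum whose mod-$p$ reduction vanishes is itself zero'' is true, but in the connective setting the paper is implicitly invoking the more familiar fact that a map of connective $p$-complete spectra inducing an isomorphism on $\bF_p$-homology is an equivalence; either justification works here since all the spectra involved are connective.
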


\begin{proof}
    Given such a $B$ we have the counit map $\bS\bW((\pi_0B/p)^\perf)\rightarrow B$. By hypothesis,
    this map induces an equivalence in $\bF_p$-homology, so it is an equivalence since $B$ is
    connective.
\end{proof}

\begin{variant}[$Q$-based Witt vectors]\label{var:qwitt}
    Viewing $\bS\bW$ as taking values in
    $\CAlg_{\bS_p}^{\bE_\infty,\wedge}$, if $Q$ is a general
    $\bE_\infty$-ring, then we can consider the composition
    $$\CAlg_{\bF_p}^\perf\xrightarrow{\bS\bW}\CAlg_{\bS_p}^{\bE_\infty,\wedge}\xrightarrow{((-)\otimes_{\bS_p}Q)_p^\wedge}\CAlg_Q^{\bE_\infty,\wedge},$$
    which has right adjoint given by the formula $R\mapsto R^\flat$.
    Write $\bW_Q$ for the composition, which we call the $Q$-based Witt vectors functor.
    For example, we have $\bS\bW\we\bW_{\bS_p}$ and $\bW\we\bW_{\bZ_p}$. The functor $\bW_Q$ takes
    values in $p$-complete $p$-completely flat $\bE_\infty$-$Q$-algebras. Moreover, if $Q\rightarrow Q'$ is a map
    of $p$-complete
    $\bE_\infty$-rings, then the natural map $(\bW_Q(k)\otimes_QQ')_p^\wedge\rightarrow\bW_{Q'}(k)$ is an
    equivalence for every $k\in\CAlg_{\bF_p}^\perf$. In particular, if $Q$ is a perfect commutative
    $\bF_p$-algebra, then $\bW_Q(k)\we k\otimes_{\bF_p}Q$. If $Q$ is a derived $p$-complete
    commutative ring, then $\bW_Q(k)\we(\bW(k)\otimes_{\bZ_p}Q)_p^\wedge$. 
\end{variant}

\begin{example}[Strict units via transmutation]\label{ex:strict}
    Carmeli has proved in~\cite{carmeli} that if $k$ is a perfect $\bF_p$-algebra, then the strict Picard
    space $\bG_\pic(\bS\bW(k))$ of $\bS\bW(k)$ fits into a canonical fiber sequence $$\Sigma
    k^\times\rightarrow\bG_\pic(\bS\bW(k))\rightarrow\Pic^\heart(k),$$ where $\Pic^\heart(k)$ denotes
    the classical Picard group of $k$, which is to say the Picard group of the symmetric monoidal
    abelian category $\Mod_k$. The natural counit map $\bG_\pic(\bS\bW((-)^\flat))\rightarrow\bG_\pic(-)$ can
    thus be rewritten as a natural transformation $\B\Gm^{\bS\bW}\rightarrow\bG_\pic$, which is an
    equivalence when restricted to the essential image of $\bS\bW$.
\end{example}

\subsection{Nonconnective spherical Witt vectors}\label{sub:nonconnective}

Allen Yuan has asked if the spherical Witt vector functor extends to $\DAlg_{\bF_p}^\perf$. We
show that it does, with some caveats; see also Section~\ref{sec:synthetic}.

\begin{definition}[Synthetic spectra]
    Let $\bS_\syn$ denote the $p$-complete synthetic sphere spectrum, which is a complete (and
    $p$-complete) filtered $\bE_\infty$-ring. This filtered spectrum has a spectral sequence whose
    $\bE_1$-page is the $\bE_2$-page of the $p$-complete $\MU$-based Adams--Novikov spectral
    sequence. It is also the even filtered $p$-complete sphere
    by~\cite{hrw} or~\cite{pstragowski-even}. Let $\FD(\bS_\syn)_p^\wedge$ denote the $\infty$-category of
    $\bS_\syn$-modules in filtered $p$-complete spectra and let $\FDhat(\bS_\syn)_p^\wedge$ denote the full
    subcategory of complete filtrations.\footnote{Beware of the two notions of completion at play here.} See~\cite{gikr,pstragowski-synthetic} for details.
\end{definition}

\begin{definition}[Synthetic spherical Witt vectors]
    If $k\in\CAlg_{\bF_p}^\perf$,
    we let $\bS\bW_\syn(k)$ be the $p$-complete even filtration on $\bS\bW(k)$. Alternatively, since
    $\bS\bW(k)$ is $p$-completely flat over $\bS_p$,
    $$\bS\bW_\syn(k)\we(\bS\bW(k)\otimes_{\bS_p}\bS_\syn)_p^\wedge.$$ This construction defines a functor
    $\bS\bW_\syn\colon\CAlg_{\bF_p}^\perf\rightarrow\CAlg(\FD(\bS_\syn)_p^\wedge)$, which
    takes values in complete $p$-complete synthetic spectra.
\end{definition}

\begin{remark}[Flat filtrations]
    Let $\F^\star R$ be a filtered $\bE_\infty$-ring. A filtered $p$-complete $\F^\star R$-module $\F^\star M$ is flat
    if the natural map $(\F^0M\otimes_{\F^0R}\F^\star R)_p^\wedge\rightarrow\F^\star M$ is an equivalence.
    As $\F^0_\syn\bS_p\we\bS_p$, each $\bS\bW_\syn(k)$ is flat as a $p$-complete synthetic spectrum.
\end{remark}

\begin{definition}[Synthetic $Q$-based Witt vectors]\label{def:synqw}
    Given an $\bE_\infty$-ring $Q$, we let
    $$\F^\star_\syn\bW_Q(k)=(\bS\bW(k)\otimes_{\bS_p}\F^\star_\ev Q)_p^\wedge\we(\bW_{\F^0_\ev
    Q}(k)\otimes_{\F^0_\ev Q}\F^\star_\ev Q)_p^\wedge,$$ which is in particular flat.
    Here, $\F^\star_\ev Q$ denotes the $p$-complete even filtration of~\cite{hrw} on $Q$.
    We will typically only apply this in cases when the even filtration on $Q$ is well behaved. In general,
    $\F^\star_\syn\bW_Q(k)\we(\bS\bW_\syn(k)\otimes_{\bS_\syn}\F^\star_\ev Q)_p^\wedge$.
    If the even filtration on $Q$ is exhaustive, so that the natural map
    $Q\rightarrow\F^{-\infty}_\ev Q$ is an equivalence, then $(\F^{-\infty}_\syn\bW_Q(k))_p^\wedge\we\bW_Q(k)$;
    that is, the filtration $\F^\star_\syn\bW_Q(k)$ is $p$-completely exhaustive.
    If $Q$ is connective, then $\F^\star_\syn\bW_Q(k)$ is complete (and exhaustive) as it is the flat filtration on a
    connective $Q$-module and because $\F^i_\ev Q$ is $i$-connective by an unpublished argument of
    Burklund and Krause. In general, we write $\F^\star_\syn\bWhat_Q(k)$ for the completion of
    $\F^\star_\syn\bW_Q(k)$ with respect to the filtration.
\end{definition}

We review Holeman's theory of non-linear right left extensions, a tool to derive functors of
finitely presented polynomial rings.

\begin{definition}
    Let $\Dscr$ be a presentable $\infty$-category. Let $\Perf(\bF_p)_{\leq 0}\subseteq\Perf(\bF_p)$
    be the full subcategory of perfect complexes with vanishing positive homology, let
    $\Vect(\bF_p)^\omega$ be the category of finite dimensional $\bF_p$-vector spaces, and let
    $\Poly(\bF_p)^\omega$ be the category of finitely presented free commutative $\bF_p$-algebras,
    viewed as a full subcategory of $\CAlg_{\bF_p}$.
    \begin{enumerate}
        \item[(i)] A functor $F\colon\Vect_{\bF_p}^\omega\rightarrow\Dscr$ is right extendable
            if the right Kan extension of $F$ to $\Perf(\bF_p)_{\leq 0}$ preserves finite
            coconnective geometric realizations (i.e., those geometric realizations which are
            $n$-skeletal for some finite $n$ and which are coconnective when computed in $\D(k)$).
        \item[(ii)] A functor $\Poly_{\bF_p}^\omega\rightarrow\Dscr$ is right extendable if its
            composition with the free commutative algebra functor $\Vect_{\bF_p}^\omega\rightarrow\Poly_{\bF_p}^\omega$
            is right extendable in the sense of part (i).
    \end{enumerate}
\end{definition}

\begin{example}
    Suppose that $\Dscr$ is equipped with a $t$-structure which is compatible with filtered colimits
    in the sense that $\Dscr_{\leq 0}$ is closed under filtered colimits in $\Dscr$. If
    $F\colon\Vect_{\bF_p}^\omega\rightarrow\Dscr_{\leq 0}$
    admits an exhaustive, increasing $\bN$-indexed filtration by additively
    polynomial subfunctors, then $F$ is right extendable when viewed as a functor to $\Dscr$. At each stage of the filtration, this
    follows from~\cite[Thm.~3.26]{brantner-mathew}, and filtered colimits commute with
    totalizations of uniformly bounded above cosimplicial objects in $\Dscr$.
    This is the situation for $\Sym_{k}$ as a functor from $\Vect_{k}^\omega$ to
    $\D(k)$ when $k$ is a field.
\end{example}

\begin{lemma}\label{lem:right_extendable}
    The functor $\Sym_{\bF_p}^\perf\colon\Vect_{\bF_p}^\omega\rightarrow\D(\bF_p)$ is right extendable.
\end{lemma}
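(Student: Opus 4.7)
The plan is to realize $\Sym^\perf$ as a filtered colimit of copies of $\Sym$ along Frobenius, and then to argue that right extendability is stable under such filtered colimits. First, I would observe that $\Sym^\perf(V)\we\colim_{k\geq 0}\Sym(V)$ in $\Fun(\Vect_{\bF_p}^\omega,\D(\bF_p)_{\leq 0})$, with transition maps given by the Frobenius $\varphi\colon\Sym(V)\to\Sym(V)$ sending $v\in V$ to $v^p$ and extended multiplicatively. This telescopic description of the perfection is valid because Frobenius is injective on the polynomial algebra $\Sym(V)$ when $V$ is a discrete finite-dimensional $\bF_p$-vector space.

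Next, I would invoke the preceding example to show that each $\Sym$ in this diagram is right extendable on its own: the exhaustive, increasing $\bN$-indexed filtration $\Sym^{\leq 0}\subseteq\Sym^{\leq 1}\subseteq\cdots$ has successive quotients $\Sym^n$ which are additively polynomial of degree $n$, verifying the hypothesis of the example.

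The remaining and most delicate step is to show that right extendability is preserved under filtered colimits of functors in $\Fun(\Vect_{\bF_p}^\omega,\D(\bF_p)_{\leq 0})$. For a finite coconnective geometric realization $|V_\bullet|$ in $\Perf(\bF_p)_{\leq 0}$ with $V_\bullet$ an $m$-skeletal simplicial object in $\Vect_{\bF_p}^\omega$, the goal is to produce a chain of equivalences
\[
R\Sym^\perf(|V_\bullet|)\we\colim_k R\Sym(|V_\bullet|)\we\colim_k|\Sym(V_\bullet)|\we|\colim_k\Sym(V_\bullet)|\we|\Sym^\perf(V_\bullet)|,
\]
where the second equivalence uses right extendability of each $\Sym$ and the third uses that filtered colimits commute with finite (in particular, $m$-skeletal) colimits in $\D(\bF_p)$.

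The hard part will be the first equivalence, which requires commuting the right Kan extension with the defining filtered colimit $\Sym^\perf\we\colim_k\Sym$. Right Kan extensions do not preserve filtered colimits in the functor variable in general, so I would exploit the finiteness of the input $|V_\bullet|$: the pointwise right Kan extension formula should be expressible as a totalization of a cosimplicial object that is uniformly bounded above in the $t$-structure, uniformly in $k$, using the finite skeletal nature of $V_\bullet$ and the coconnectivity of its realization. Once that reduction is in hand, the commutation of filtered colimits with such totalizations in $\D(\bF_p)$---the compatibility of the standard $t$-structure with filtered colimits recorded in the preceding example---delivers the interchange. Alternatively, one can bypass the interchange by directly defining a candidate extension $K=|V_\bullet|\mapsto|\Sym^\perf(V_\bullet)|$ and checking that it satisfies the universal property of the right Kan extension, reducing this check to the already-established right extendability of $\Sym$ at each finite stage of the telescope.
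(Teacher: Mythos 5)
Your proposal is correct and matches the paper's own proof in both structure and content: both realize $\Sym_{\bF_p}^\perf$ as the filtered colimit of $\Sym_{\bF_p}$ along Frobenius, both invoke the preceding example for right extendability of $\Sym_{\bF_p}$, both commute the right Kan extension with the filtered colimit by appealing to the fact that filtered colimits commute with uniformly bounded above totalizations, and both finish by noting that filtered colimits commute with the finite colimits defining $n$-skeletal geometric realizations. One small over-complication: the uniform bound on the cosimplicial totalization computing the right Kan extension comes simply from $\Sym_{\bF_p}$ taking values in $\D(\bF_p)_{\leq 0}$ (so the resolution is coconnective in each cosimplicial degree), and does not actually require the finite skeletal nature of the input $|V_\bullet|$ — that finiteness is only needed for the later step of commuting the filtered colimit past the geometric realization.
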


\begin{proof}
    As an endofunctor of $\Mod_{\bF_p}$, $\Sym_{\bF_p}^\perf$ is the filtered colimit of
    $\Sym_{\bF_p}$ along Frobenius. It follows that the right Kan extension of the restriction
    of $\Sym_{\bF_p}^\perf$ from $\Vect_{\bF_p}^\omega$ to $\Perf(\bF_p)_{\leq 0}$ is the ``perfection'' of the right Kan extension of
    the restriction of $\Sym_{\bF_p}$ to $\Perf(\bF_p)_{\leq 0}$ using that filtered colimits commute with
    uniformly bounded above totalizations.
    As this filtered colimit commutes in particular with $n$-skeletal coconnective geometric
    realizations, the result follows from the right extendability of $\Sym_{\bF_p}$.
\end{proof}

We will have occasion to argue that a functor $\Poly_{\bF_p}^\omega\rightarrow\Cscr$ is right extendable by
composing with a functor $F\colon\Cscr\rightarrow\Dscr$ and making some argument. The following
lemma encodes possible sufficient conditions on $F$.

\begin{lemma}\label{lem:extendability}
    Let $S\colon\Poly_{\bF_p}^\omega\rightarrow\Cscr$ be a functor to an $\infty$-category which
    admits all colimits and let $F\colon\Cscr\rightarrow\Dscr$ be a functor to another such
    $\infty$-category. Suppose that the composition $F\circ S$ is right extendable. If $F$ is
    conservative and either
    \begin{enumerate}
        \item[{\em (a)}] $F$ commutes with all totalizations and finite geometric realizations or
        \item[{\em (b)}] $S$ takes uniformly bounded above values (for example coconnective values)
            and $F$ commutes with finite geometric realizations and with uniformly bounded above totalizations
            with respect to $t$-structures on $\Cscr$ and $\Dscr$ which are compatible with filtered
            colimits,
    \end{enumerate}
    then $S$ is right extendable.
\end{lemma}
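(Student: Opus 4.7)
My plan is to reduce the right extendability of $S$ to that of $F\circ S$, which is given, using conservativity of $F$ together with its compatibility with the limits that compute the right Kan extension. Write $R_S$ for the right Kan extension of $S\circ\Sym_{\bF_p}\colon\Vect_{\bF_p}^\omega\rightarrow\Cscr$ to $\Perf(\bF_p)_{\leq 0}$, and analogously $R_{FS}$ for the right Kan extension of $F\circ S\circ\Sym_{\bF_p}$.

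The first step is to identify $F\circ R_S\simeq R_{FS}$ as functors on $\Perf(\bF_p)_{\leq 0}$. For this, I use that the right Kan extension admits a pointwise description as a totalization of a cosimplicial resolution: every $V\in\Perf(\bF_p)_{\leq 0}$ can be written as $V\simeq\Tot W^\bullet$ with $W^i\in\Vect_{\bF_p}^\omega$, for instance by applying the Dold--Kan correspondence to a finite-length free resolution of $V$, in which case both $R_S(V)$ and $R_{FS}(V)$ are totalizations of the cosimplicial objects $(S\circ\Sym_{\bF_p})(W^\bullet)$ and $(F\circ S\circ\Sym_{\bF_p})(W^\bullet)$ respectively. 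The claim therefore reduces to $F$ commuting with the totalization in question. Under hypothesis (a) this is automatic. Under hypothesis (b), I note that $S\circ\Sym_{\bF_p}$ takes uniformly bounded above values on $\Vect_{\bF_p}^\omega$ since $S$ does and $\Sym_{\bF_p}(W)$ is discrete for $W\in\Vect_{\bF_p}^\omega$; so $(S\circ\Sym_{\bF_p})(W^\bullet)$ is a uniformly bounded above cosimplicial object in $\Cscr$, and $F$ commutes with its totalization by the hypothesis on $F$.

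Given this identification, let $|V_\bullet|$ be a finite coconnective geometric realization in $\Perf(\bF_p)_{\leq 0}$ and consider the natural map
\[
    \big|R_S(V_\bullet)\big|\longrightarrow R_S(|V_\bullet|)
\]
in $\Cscr$. Applying $F$, using that $F$ commutes with finite geometric realizations, and invoking the identification $F\circ R_S\simeq R_{FS}$ from the previous step, this becomes the analogous comparison map for $R_{FS}$, which is an equivalence by the assumed right extendability of $F\circ S$. Since $F$ is conservative, the original map is an equivalence in $\Cscr$, so $R_S$ preserves finite coconnective geometric realizations, i.e., $S$ is right extendable.

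The main obstacle is justifying the pointwise totalization formula for $R_S$, and in case (b) the further requirement that the cosimplicial resolution can be chosen with uniformly bounded above terms. A finite-length free resolution of $V\in\Perf(\bF_p)_{\leq 0}$ produces a cosimplicial resolution of exactly this form, so the difficulty is essentially bookkeeping, but it is the essential technical input underlying both cases.
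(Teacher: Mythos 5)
Your proposal is correct and follows the same strategy as the paper's proof: form the comparison map $F\circ R_S\to R_{FS}$, show it is an equivalence because $F$ commutes with the totalizations computing the pointwise right Kan extension (under (a) automatically; under (b) because the terms $(S\circ\Sym_{\bF_p})(W^\bullet)$ are uniformly bounded above by the hypothesis on $S$), and then transfer preservation of finite coconnective geometric realizations from $(F\circ S)^R$ to $S^R$ via conservativity of $F$ together with its commutation with finite geometric realizations. The paper's proof is considerably more terse and simply records the identity $F\circ S^R\simeq(F\circ S)^R$ as a consequence of the hypotheses; you spell out the Dold--Kan cosimplicial resolution $V\simeq\Tot W^\bullet$ that makes this explicit. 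The one place where you rightly flag a gap — that the pointwise right Kan extension along $\Vect_{\bF_p}^\omega\hookrightarrow\Perf(\bF_p)_{\leq 0}$ is indeed computed by totalizing such a levelwise finite-dimensional cosimplicial resolution — is exactly the technical input the paper also implicitly invokes (alluding to the compatibility of filtered colimits with uniformly bounded above totalizations); neither proof spells this cofinality point out, and making it precise is the genuine content of the ``bookkeeping.''
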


\begin{proof}
    The lemma under assumption (a) follows by conservativity. Under (b), one uses that filtered
    colimits commute with totalizations
    of uniformly bounded objects. Indeed, if $S^R$ and $(F\circ S)^R$ denote the right Kan
    extensions from $\Vect_{\bF_p}^\omega$ to $\Perf_{\bF_p,\leq 0}$ of the compositions
    of $S$ and $F\circ S$ with $\LSym\colon\Vect_{\bF_p}^\omega\rightarrow\Poly_{\bF_p}^\omega$, then assumption (b) implies that $F\circ
    S^R\we(F\circ S)^R$. As $(F\circ S)^R$ preserve finite coconnective geometric realizations by
    hypothesis and since $F$ is conservative and commutes with finite geometric realizations, it
    follows that $S^R$ preserves finite coconnective geometric realizations as well.
\end{proof}

\begin{construction}[Non-linear right left extension]
    \label{const:nlrl}
    If $F\colon\Poly_{\bF_p}^\omega\rightarrow\Dscr$ is right extendable, its non-linear right left
    extension is constructed as follows. Let $F'$ denote the restriction of $F$ along
    $\Sym_{\bF_p}\colon\Vect_{\bF_p}^\omega\rightarrow\Poly_{\bF_p}^\omega$. Let $F'^{\R}$ denote the right Kan
    extension of $F'$ along the inclusion $\Vect_{\bF_p}^\omega\rightarrow\Perf_{\bF_p,\leq 0}$.
    Let $F'^{\R\L}$ be the left Kan extension of $F'^{\R}$ from $\Perf_{\bF_p,\leq
    0}$ to $\D(\bF_p)$.
    For $R\in\DAlg_{\bF_p}$, one considers the bar construction
    $(\LSym_{\bF_p})^{\circ(\bullet+1)}(R)$, a simplicial object whose colimit is $R$.
    Then, using that $F'$ arises from a functor of finitely presented polynomial $\bF_p$-algebras to
    begin with, Holeman shows in~\cite[Const.~2.2.17]{holeman-derived} that the pointwise
    application of $F'^{\R\L}$ is well-defined and compatible with the maps in the simplicial
    diagram and one defines
    the non-linear right left extension of $F$ on $R$ to be the geometric realization
    $$F^{\nlRL}(R)=|F'^{\R\L}((\LSym_{\bF_p})^{\circ(\bullet+1)}(R))|.$$
    This construction is natural in $R$ and gives, for any presentable $\infty$-category, a functor
    $$\Fun^{\mathrm{ext}}(\Poly_{\bF_p}^\omega,\Dscr)\xrightarrow{F\mapsto F^{\nlRL}}\Fun_\Sigma(\DAlg_{\bF_p},\Dscr).$$
    where the left-hand side is the full subcategory of right extendable functors and the
    right-hand side is the full subcategory of functors preserving sifted colimits.
\end{construction}

\begin{lemma}[Properties of non-linear right left extension]
    Let $F\colon\Poly_{\bF_p}^\omega\rightarrow\Dscr$ be right extendable and let
    $F^{\nlRL}\colon\DAlg_{\bF_p}\rightarrow\Dscr$ be the induced non-linear right left
    extension.
    \begin{enumerate}
        \item[{\em (a)}] The restriction of $F^{\nlRL}$ to $\Poly_{\bF_p}^\omega$ is naturally
            equivalent to $F$.
        \item[{\em (b)}] The restriction of $F^{\nlRL}$ to $\DAlg_{\bF_p}^\cn$ is naturally
            equivalent to the left Kan extension of $F$.
    \end{enumerate}
\end{lemma}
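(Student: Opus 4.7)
The plan for part (a) is to exploit the extra degeneracy on the bar construction of a free algebra. For $P = \LSym_{\bF_p}(V) \in \Poly_{\bF_p}^\omega$ with $V \in \Vect_{\bF_p}^\omega$, the augmented simplicial object $(\LSym_{\bF_p})^{\circ(\bullet+1)}(P) \to P$ in $\DAlg_{\bF_p}$ is split, since $P$ is free over the monad $\LSym_{\bF_p}$, and admits an extra degeneracy arising from the unit of the adjunction between $\bF_p$-modules and $\DAlg_{\bF_p}$ applied to $V$. Pointwise application of $F'^{\R\L}$ respects this structure by naturality, yielding a split augmented simplicial object in $\Dscr$ whose geometric realization computes the augmentation $F'^{\R\L}(P)$. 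What remains is to identify $F'^{\R\L}(P)$ with $F(P) = F'(V)$; this identification is the crux of Holeman's construction~\cite[Const.~2.2.17]{holeman-derived} and ultimately rests on the right extendability hypothesis forcing the finite coconnective geometric realizations involved in computing $F'^{\R}$ at $V$ to be preserved.

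For part (b), I would combine (a) with the universal property of left Kan extension. The $\infty$-category $\DAlg_{\bF_p}^\cn$ is the sifted-colimit cocompletion of $\Poly_{\bF_p}^\omega$. By the final assertion of Construction~\ref{const:nlrl}, the functor $F \mapsto F^{\nlRL}$ lands in $\Fun_\Sigma(\DAlg_{\bF_p},\Dscr)$, so $F^{\nlRL}$ preserves sifted colimits, and a fortiori its restriction to $\DAlg_{\bF_p}^\cn$ does as well. Combined with part (a), the restriction $F^{\nlRL}|_{\DAlg_{\bF_p}^\cn}$ is a sifted-colimit-preserving functor whose restriction to the generating subcategory $\Poly_{\bF_p}^\omega$ is $F$, which by the universal property of left Kan extensions identifies it with the left Kan extension of $F$ along $\Poly_{\bF_p}^\omega \hookrightarrow \DAlg_{\bF_p}^\cn$.

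The main obstacle I anticipate is the identification $F'^{\R\L}(\LSym(V)) \simeq F(\LSym(V))$ needed in part (a). A naive computation via the filtered colimit $\LSym(V) = \colim_n V^{\leq n}$ by its degree-bounded finite-dimensional submodules would give $\colim_n F(\LSym(V^{\leq n}))$, which is not manifestly the same as $F(\LSym(V))$. The correct interpretation of Holeman's pointwise application of $F'^{\R\L}$ to the bar construction must therefore track the polynomial algebra structure on each level rather than merely the underlying module; this is precisely what is encoded in the phrase ``using that $F'$ arises from a functor of finitely presented polynomial $\bF_p$-algebras to begin with'' in Construction~\ref{const:nlrl}, and carefully unpacking this compatibility is the technical heart of the argument.
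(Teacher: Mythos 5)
Your part (b) argument is fine modulo part (a): sifted colimit preservation of $F^{\nlRL}$ together with agreement on $\Poly_{\bF_p}^\omega$ does characterize the restriction to $\DAlg_{\bF_p}^\cn$ as the left Kan extension. The paper itself simply cites Holeman for both parts, so the real question is whether your reconstruction of part (a) works.

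There is a genuine gap in part (a), and it is exactly the issue your own final paragraph flags, but your main argument has already committed to the wrong reading before you get there. You take ``pointwise application of $F'^{\R\L}$'' to mean literally applying the module-level functor $F'^{\R\L}\colon\D(\bF_p)\to\Dscr$ to the underlying module of each term of the bar resolution and to the augmentation $P$ itself, which makes the split simplicial object contract to $F'^{\R\L}(P)$. You then say what remains is to identify $F'^{\R\L}(P)\simeq F(P)$. But that identification is \emph{false} in general: take $F=\id_{\DAlg_{\bF_p}}$ restricted to $\Poly_{\bF_p}^\omega$, so that $F'=\Sym_{\bF_p}$ and $F'^{\R\L}=\LSym_{\bF_p}$. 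Then $F'^{\R\L}(P)=\LSym_{\bF_p}(\Sym_{\bF_p}(V))\simeq\Sym_{\bF_p}(\Sym_{\bF_p}(V))$ (the free commutative ring on the underlying module of $P$), which is certainly not $P=\Sym_{\bF_p}(V)$. So the remaining step you defer to Holeman cannot be filled in along those lines, and no appeal to right extendability will make $F'^{\R\L}(\Sym V)$ collapse to $\Sym V$. Moreover, your literal reading would make the split trick give $F^{\nlRL}\simeq F'^{\R\L}|_{\DAlg_{\bF_p}}$ for \emph{every} $R$, not just free $P$, which already shows the interpretation is untenable and Example~\ref{ex:identity} would fail.

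The correct reading of Construction~\ref{const:nlrl} is that each bar term $(\LSym)^{\circ(n+1)}(R)$ is treated as $\LSym(M_n)$ with $M_n=(\LSym)^{\circ n}(R)$, and one assigns to it $F'^{\R\L}(M_n)$ --- not $F'^{\R\L}$ of the underlying module of $\LSym(M_n)$. Under that assignment, for $P=\LSym(V)$ the augmentation term becomes $F'^{\R\L}(V)\simeq F'(V)=F(P)$ (since $F'^{\R\L}$ restricted to $\Vect_{\bF_p}^\omega$ is $F'$), and the far-right extra degeneracy coming from $\eta_V$ is visibly $\LSym$ applied to the module map $(\LSym)^{\circ n}(\eta_V)$, so it survives the assignment. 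Thus the split gives $|G_\bullet|\simeq F(P)$ directly, with no further identification to prove. The genuine technical content, which is what Holeman actually establishes and what neither of us has reproduced here, is that this assignment is functorial in the simplicial direction even for the faces (notably $\mu$) that are not of the form $\LSym$ of a module map; that is what ``compatible with the maps in the simplicial diagram'' refers to. Your closing paragraph correctly intuits that one must ``track the polynomial algebra structure,'' but the body of your argument does not do this, and the reduction it ends on is to a false statement.
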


\begin{proof}
    See~\cite[Lem.~2.2.18, Obs.~2.2.19]{holeman-derived}.
\end{proof}

\begin{example}
    Suppose that $F\colon\Poly_{\bF_p}^\omega$ is right extendable and
    arises as the restriction of a functor
    $\widetilde{F}\colon\DAlg_{\bF_p}\rightarrow\Dscr$ to $\Poly_{\bF_p}^\omega$.
    We want to compare $F^{\nlRL}$ to $\widetilde{F}$. There is a canonical map
    $$|\widetilde{F}((\LSym_{\bF_p})^{\circ(\bullet+1)}(R))|\rightarrow\widetilde{F}(R)$$
    for $R\in\DAlg_{\bF_p}$. Write $\widetilde{F}^\L$ for the left-hand construction, so there is a map
    $\widetilde{F}^\L\rightarrow\widetilde{F}$ of functors $\DAlg_{\bF_p}\rightarrow\Dscr$.
    By construction, $\widetilde{F}^\L$ arises from an
    $\LSym_{\bF_p}$-module $\widetilde{F}^\L_\Mod$
    in functors $\D(\bF_p)\rightarrow\Dscr$. Let $\widetilde{F}^{\L\L}$
    denote the sifted colimit-preserving approximation to $\widetilde{F}^\L$, obtained by taking
    the sifted colimit-preserving approximation $\widetilde{F}^{\L\L}_\Mod$
    of $\widetilde{F}^\L_\Mod$ in $\LSym_{\bF_p}$-modules and geometrically
    realizing using the bar construction. Finally, by construction, there is a map
    $\widetilde{F}^{\L\L}_\Mod\rightarrow F'^{\R\L}$ of $\LSym_{\bF_p}$-modules (in the notation
    of Construction~\ref{const:nlrl}),
    which is obtained by restriction of the left-hand side to $\Vect_{\bF_p}^\omega$, right Kan
    extending to $\Perf_{\bF_p,\leq 0}$, and left Kan extending. 
    It follows that there is a diagram
    $$\xymatrix{
        \widetilde{F}^{\L\L}\ar[r]\ar[d]&\widetilde{F}^\L\ar[r]&\widetilde{F}\\
        F^{\nlRL}.
    }$$
    If $\widetilde{F}$ preserves sifted colimits, then the horizontal arrows above are
    equivalences, from which one obtains (by taking inverses) a canonical map
    $\widetilde{F}\rightarrow F^{\nlRL}$. This map is an equivalence if and only if the
    restriction of $\widetilde{F}$ to $\Perf_{\bF_p,\leq 0}$ along $\LSym_{\bF_p}$ is right Kan
    extended from $\Vect_{\bF_p}^\omega$.
\end{example}

\begin{definition}
    We say that a sifted-colimit preserving functor
    $\widetilde{F}\colon\DAlg_{\bF_p}\rightarrow\Dscr$ whose restriction to
    $\Poly_{\bF_p}^\omega$ is right extendable is
    non-linearly right left extended if the natural map $\widetilde{F}\rightarrow F^{\nlRL}$ is
    an equivalence.
\end{definition}

\begin{example}[The identity functor as an $\nlRL$ extension]\label{ex:identity}
    The identity functor of $\DAlg_{\bF_p}$ is non-linearly right left extended.
\end{example}

\begin{example}[Perfection as an $\nlRL$ extension]
    \label{ex:perfection}
    The perfection functor $(-)_\perf\colon\DAlg_{\bF_p}\rightarrow\DAlg_{\bF_p}^\perf$ is non-linearly right
    left extended by an argument as in Lemma~\ref{lem:right_extendable}.
\end{example}

\begin{proposition}\label{prop:extendable}
    Let $Q$ be a $p$-complete $\bE_\infty$-ring and let $\F^\star_\ev Q$ be the $p$-complete even filtration on
    $Q$. If $\gr^0Q$ is a discrete commutative ring and if $\gr^iQ$ is a perfect $\gr^0Q$-module for each $i\in\bZ$,
    then $$\F^\star_\syn\bWhat_Q((-)_\perf)\colon\Poly_{\bF_p}^\omega\rightarrow\CAlg(\FDhat(\F^\star_\ev
    Q)_p^\wedge)$$ is right extendable.
\end{proposition}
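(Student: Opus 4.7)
The plan is to apply Lemma~\ref{lem:extendability} with the conservative forgetful composition
$$F\colon\CAlg(\FDhat(\F^\star_\ev Q)_p^\wedge)\rightarrow\FDhat(\F^\star_\ev Q)_p^\wedge\xrightarrow{\gr^\star}\GrMod(\gr^\star Q)_p^\wedge,$$
thereby reducing the right extendability of $S=\F^\star_\syn\bWhat_Q((-)_\perf)$ to a computation on associated graded pieces. The functor $F$ is conservative on complete filtered objects and, with respect to the natural $t$-structures on both sides, commutes with finite geometric realizations and with uniformly bounded above totalizations, so the hypotheses of Lemma~\ref{lem:extendability}(b) apply once I confirm the uniform upper bound on the values of $S\circ\Sym_{\bF_p}$. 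That bound will follow from connectivity and flatness of $\bS\bW(\Sym^\perf_{\bF_p}(V))$ over $\bS_p$ together with the coconnectivity controls on $\F^\star_\ev Q$.

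For the graded calculation, the flatness of $\F^\star_\syn\bW_Q(k)$ over $\F^\star_\ev Q$ noted in Definition~\ref{def:synqw} and the fact that $p$-completion of the filtration preserves graded pieces yield
$$\gr^i\F^\star_\syn\bWhat_Q(k)\simeq(\bS\bW(k)\otimes_{\bS_p}\gr^iQ)_p^\wedge\simeq(\bW_{\gr^0Q}(k)\otimes_{\gr^0Q}\gr^iQ)_p^\wedge$$
for $k\in\CAlg_{\bF_p}^\perf$, and Variant~\ref{var:qwitt} together with discreteness of $\gr^0Q$ identifies $\bW_{\gr^0Q}(k)\simeq(\bW(k)\otimes_{\bZ_p}\gr^0Q)_p^\wedge$. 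Because $\gr^iQ$ is a perfect $\gr^0Q$-module, tensoring with $\gr^iQ$ over $\gr^0Q$ is built from finite colimits of shifts of free modules and so preserves right extendability. This reduces the problem to the right extendability of $k\mapsto(\bW(k)\otimes_{\bZ_p}\gr^0Q)_p^\wedge$ precomposed with $(-)_\perf\circ\Sym_{\bF_p}$.

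For this final step, I use Holeman's identification of $\bW\circ(-)_\perf$ as the non-linear right left extension of its restriction to finitely presented polynomial $\bF_p$-algebras (cf.~Example~\ref{ex:perfection}); on $\Sym^\perf_{\bF_p}(V)$ it yields the $p$-completed perfection of $\Sym_{\bW(\bF_p)}(V)$. Right extendability then follows from Lemma~\ref{lem:right_extendable} (applied after base change to $\bW(\bF_p)$) combined with exactness of $(-\otimes_{\bZ_p}\gr^0Q)_p^\wedge$ on $p$-completely flat modules. The main obstacle will be carefully verifying the hypotheses of Lemma~\ref{lem:extendability}(b) in the $p$-complete filtered setting, in particular ensuring that $\gr^\star$ interacts cleanly with $p$-completion and that the values of $\F^\star_\ev Q$ admit the required uniform upper bound; here the connectivity estimate of Burklund--Krause alluded to in Definition~\ref{def:synqw} provides the essential input.
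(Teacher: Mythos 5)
Your overall architecture is close to the paper's—reduce along $\gr^\star$ to graded pieces, then use perfectness of $\gr^iQ$ to get to weight zero, then invoke Lemma~\ref{lem:right_extendable}—but there are two genuine gaps.

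First, your opening paragraph tries to run the whole reduction through Lemma~\ref{lem:extendability}(b), which requires $S$ to take \emph{uniformly bounded above} values. But $\bS\bW(\Sym^\perf_{\bF_p}V)$ is a connective $\bE_\infty$-ring, $p$-completely flat over $\bS_p$, so it has nonzero homotopy in arbitrarily high degrees; ``connectivity and flatness'' give a lower bound, not an upper bound. In the standard $t$-structure on $\FDhat(\F^\star_\ev Q)_p^\wedge$ these values are not bounded above, so (b) does not apply at this level. (There is also a more minor point: $\CAlg(\FDhat(\F^\star_\ev Q)_p^\wedge)$ is not stable and carries no $t$-structure, so any use of part~(b) must come \emph{after} forgetting the algebra structure via part~(a).) The paper avoids the boundedness problem entirely by observing that the forgetful functor and $\gr^\star$ (on complete objects) are conservative and commute with \emph{all} limits and sifted colimits, i.e.\ satisfy the hypotheses of part~(a); part~(b) is only needed at the very bottom of the diagram, after a further reduction.

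Second, and more importantly, your final step does not work as stated. You want right extendability of $V\mapsto\bigl(\bW(\Sym^\perf_{\bF_p}V)\otimes_{\bZ_p}\gr^0Q\bigr)_p^\wedge$ and propose to get it from Lemma~\ref{lem:right_extendable} ``applied after base change to $\bW(\bF_p)$.'' But there is no ring map $\bF_p\to\bZ_p$ and hence no base change functor $\D(\bF_p)\to\D(\bZ_p)$ in that direction; $\bW(\Sym^\perf_{\bF_p}V)$ is not literally $\Sym^\perf_{\bZ_p}$ applied to a functorial $\bZ_p$-lift of $V$ (no such lift exists functorially on $\Vect_{\bF_p}^\omega$). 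What you're missing is the mod~$p$ reduction: the paper tensors down to $\D(\gr^0_\ev Q/p)$, where the functor genuinely factors as $V\mapsto\Sym^\perf_{\bF_p}(V)\otimes_{\bF_p}\gr^0_\ev Q/p$—i.e.\ through extension of scalars from $\D(\bF_p)$, which \emph{is} a well-defined base change—and where the values are uniformly bounded above so that the Lemma~\ref{lem:extendability}(b)-style argument applies. One then climbs back up to $\D(\gr^0_\ev Q)_p^\wedge$ using conservativity of $(-)\otimes_{\gr^0_\ev Q}\gr^0_\ev Q/p$ (which is perfect over $\gr^0_\ev Q$ and hence commutes with all limits and colimits, Lemma~\ref{lem:extendability}(a)), and from there to $\Gr\D$ and $\FDhat$ as in your sketch. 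The reduction mod~$p$ is the key idea that makes the ``bottom'' of the argument bounded, and it is absent from your write-up.
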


\begin{proof}
    The forgetful functor $\CAlg(\FDhat(\F^\star_\ev Q)_p^\wedge)\rightarrow\FDhat(\F^\star_\ev
    Q)_p^\wedge$ preserves
    limits and sifted colimits. So, we can test right extendability after forgetting the
    $\bE_\infty$-algebra structure by Lemma~\ref{lem:extendability}(a). Consider the commutative diagram
    $$\xymatrix{
        \Vect_{\bF_p}^\omega\ar[rrrd]_{\bW_{\gr^0Q}(\Sym_{\bF_p}^\perf(-))}\ar[rr]^{\F^\star_\ev\bW_Q(\Sym_{\bF_p}^\perf(-))}&&\FD(\F^\star_\ev
        Q)_p^\wedge\ar[r]^{\mathrm{complete}}&\FDhat(\F^\star_\ev
        Q)_p^\wedge\ar[rr]^{\gr^\star}&&\Gr\D(\gr^\star_\ev Q)_p^\wedge\ar[d]^{\mathrm{ev}^0(-)/p}\\
        &&&\D(\gr^0_\ev Q)_p^\wedge\ar[urr]^{(-)\otimes_{\gr^0_\ev Q}\gr^\star_\ev
        Q}\ar[rr]^{(-)\otimes_{\gr^0_\ev Q}\gr^0_\ev Q/p}&&\D(\gr^0_\ev Q/p),
    }$$
    where $\gr^0_\ev Q/p$ is the derived mod $p$ reduction and is in particular bounded above.
    The bottom composition from $\Vect_{\bF_p}^\omega$ to $\D(\gr^0_\ev Q/p)$ is right extendable by
    Lemma~\ref{lem:extendability}(b) as it factors through the extension of scalars functor $\D(\bF_p)\rightarrow\D(\gr^0_\ev Q/p)$.
    This also implies right extendability of the functor to $\D(\gr^0_\ev Q)_p^\wedge$ by
    completeness, since $\gr^0_\ev Q/p$ is perfect as a $\gr^0_\ev Q$-module
    and so tensoring with it commutes with all limits and colimits and is conservative.
    Right extendability of the composition to $\Gr\D(\gr^\star_\ev Q)_p^\wedge$ now follows from the
    assumption that each $\gr^i_\ev Q$ is perfect as a $\gr^0_\ev Q$-module. As $\gr^\star$ is
    conservative and commutes with all limits and colimits, it follows that the functor to
    $\FDhat(\F^\star_\ev Q)_p^\wedge$ is right extendable.
\end{proof}

\begin{theorem}\label{thm:nonconnective}
    Let $Q$ be a $p$-complete $\bE_\infty$-ring and let $\F^\star_\ev Q$ be the $p$-complete even filtration on
    $Q$. If $\gr^0Q$ is a discrete commutative ring and if $\gr^iQ$ is a perfect $\gr^0Q$-module for each $i\in\bZ$,
    then there is a nonconnective complete $p$-complete synthetic $Q$-based Witt vectors functor
    $\F^\star_\syn\bWhat_Q(-)\colon\DAlg_{\bF_p}^\perf\rightarrow\CAlg(\FDhat(\F^\star_\ev
    Q)_p^\wedge)$. Forgetting the filtration induces a nonconnective $Q$-based Witt vectors functor
    $\bWhat_Q\colon\DAlg_{\bF_p}^\perf\rightarrow\CAlg_Q^{\bE_\infty}$. Let
    $\bW=\bW_{\bZ_p}=\bWhat_{\bZ_p}$. The nonconnective $Q$-based Witt vectors satisfy the following properties:
    \begin{enumerate}
        \item[{\em (i)}] $\bW(-)\otimes_{\bZ_p}\bF_p$ factors canonically through
            $\DAlg_{\bF_p}^\perf$ and the induced functor
            $\DAlg_{\bF_p}^\perf\rightarrow\DAlg_{\bF_p}^\perf$ is the identity;
        \item[{\em (ii)}] $\F^\star_\syn\bWhat_Q(-)$ preserves sifted colimits and finite totalizations;
        \item[{\em (iii)}] the restriction of $\F^\star_\syn\bWhat_Q$ to $\CAlg_{\bF_p}^\perf$ agrees with the
            functor constructed via transmutation and flat filtrations in
            Definition~\ref{def:synqw};
        \item[{\em (iv)}] there are natural equivalences $(\bW(k)\otimes_{\bZ_p}\gr^\star_\ev
            Q)_p^\wedge\we\gr^\star_\syn\bWhat_Q(k)$
            for all $k\in\DAlg_{\bF_p}^\perf$.
    \end{enumerate}
\end{theorem}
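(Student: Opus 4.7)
The plan is to construct $\F^\star_\syn\bWhat_Q(-)$ on $\DAlg_{\bF_p}^\perf$ by applying Holeman's non-linear right left extension (Construction~\ref{const:nlrl}) to the right-extendable functor $\F^\star_\syn\bWhat_Q((-)_\perf)\colon\Poly_{\bF_p}^\omega\to\CAlg(\FDhat(\F^\star_\ev Q)_p^\wedge)$ supplied by Proposition~\ref{prop:extendable}. The nlRL extension yields a sifted-colimit-preserving functor on all of $\DAlg_{\bF_p}$, and restricting along the forgetful inclusion $\DAlg_{\bF_p}^\perf\hookrightarrow\DAlg_{\bF_p}$ gives the desired $\F^\star_\syn\bWhat_Q$; forgetting the filtration gives the underlying nonconnective $\bWhat_Q$. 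Property (ii) on sifted colimit preservation is immediate from the nlRL construction, and preservation of finite totalizations will follow from the corresponding preservation by $\gr^\star$, completion, $p$-completion, and derived reduction mod $p$ that appeared in the proof of Proposition~\ref{prop:extendable}.

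For property (iii), the strategy is a two-step comparison on $\CAlg_{\bF_p}^\perf$: on perfections of finitely presented polynomial $\bF_p$-algebras the nlRL construction agrees with Definition~\ref{def:synqw} by definition of the input functor, and every perfect $\bF_p$-algebra is a sifted colimit of such perfections in $\CAlg_{\bF_p}^\perf$ by Lemma~\ref{lem:animated}. Since both constructions preserve sifted colimits (the classical one as a composition of a left adjoint with a flat filtration operation; the new one by nlRL), the comparison extends to all perfect $\bF_p$-algebras.

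Properties (i) and (iv) I would prove by reducing to the generators and using sifted-colimit preservation to extend. For (iv), on $k=A_\perf$ with $A\in\Poly_{\bF_p}^\omega$ the formula $\gr^\star_\syn\bWhat_Q(k)\simeq(\bW(k)\otimes_{\bZ_p}\gr^\star Q)_p^\wedge$ holds from $p$-complete flatness of $\bS\bW(k)$ (Proposition~\ref{prop:homology_of_sw}(a)) combined with the flat-filtration definition of Definition~\ref{def:synqw}. Both sides are then sifted-colimit-preserving functors of $k$: the right-hand side via nlRL, and the left-hand side by applying nlRL with $Q$ replaced by $\bZ_p$ and tensoring with the fixed perfect graded $\gr^0Q$-module $\gr^\star Q$. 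Thus the equivalence extends to all of $\DAlg_{\bF_p}^\perf$. Property (i) is the specialization to $Q=\bZ_p$ of (iv) in degree $0$: on polynomial generators this is Proposition~\ref{prop:homology_of_sw}(c), and sifted-colimit preservation of both $\bW(-)\otimes_{\bZ_p}\bF_p$ and the identity on $\DAlg_{\bF_p}^\perf$ carries it to all $k$.

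The main obstacle I anticipate is ensuring that $(\bW(-)\otimes_{\bZ_p}\gr^\star Q)_p^\wedge$ genuinely preserves sifted colimits as a functor of $k\in\DAlg_{\bF_p}^\perf$, which is essential for the extension argument in (iv). The $p$-completion is the delicate ingredient, and its compatibility with sifted colimits depends precisely on each $\gr^i Q$ being a perfect $\gr^0Q$-module---the standing hypothesis of the theorem and the same input that powers Proposition~\ref{prop:extendable}. Once both sides of (iv) are recognized as nlRL extensions of the same functor on polynomial generators, the comparison becomes formal, but the bookkeeping to align the two nlRL identifications---in particular, the interaction between $\gr^\star$ on the filtered side and the tensor product with $\gr^\star Q$ on the underlying-module side---will be the technical core of the argument.
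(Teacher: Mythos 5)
Your overall framework—invoke Holeman's non-linear right left (nlRL) extension via Proposition~\ref{prop:extendable}, then verify the listed properties—is the right one, and your argument for (iii) works: $\CAlg_{\bF_p}^\perf$ is indeed the sifted-colimit closure of the perfections of finitely presented polynomial rings by Lemma~\ref{lem:animated}, so agreement there suffices. (The paper instead quotes Holeman's observation that nlRL agrees with the left Kan extension on connective objects; both are fine.)

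However, your strategy for (i) and (iv) has a genuine gap. You propose to establish the equivalences on the generators $A_\perf$ with $A\in\Poly_{\bF_p}^\omega$ and then ``use sifted-colimit preservation to extend,'' treating the target category as $\DAlg_{\bF_p}^\perf$. But $\DAlg_{\bF_p}^\perf$ is \emph{not} generated under sifted colimits by perfect polynomial rings: that sifted-colimit closure is $\CAlg_{\bF_p}^\perf$, which is a discrete $1$-category (Lemma~\ref{lem:animated}), whereas $\DAlg_{\bF_p}^\perf$ contains genuinely coconnective objects such as $\bF_p^{K(\bZ/p,1)}$. The whole point of the theorem is to reach those nonconnective $k$, and your sifted-colimit extension argument cannot get there. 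The correct move, which is what the paper does, is to carry out the comparison at the level of $\DAlg_{\bF_p}$—which \emph{is} the sifted-colimit closure of $\Poly_{\bF_p}^\omega$—by working with the nlRL functor $F_Q$ on all of $\DAlg_{\bF_p}$ and arguing on associated graded pieces (and, for (i), identifying $F_{\bZ_p}\otimes_{\bZ_p}\bF_p$ with the nlRL extension of the perfection functor via Example~\ref{ex:perfection}), then deducing that $F_Q$ factors through the localization $(-)_\perf\colon\DAlg_{\bF_p}\to\DAlg_{\bF_p}^\perf$ and restricting. Simply restricting along the inclusion $\DAlg_{\bF_p}^\perf\hookrightarrow\DAlg_{\bF_p}$, as you propose, gives a functor but does not by itself produce the equivalences in (i) and (iv) unless you have first established them on all of $\DAlg_{\bF_p}$.

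Secondarily, for the preservation of finite totalizations in (ii) you gesture at the chain of conservative, limit-preserving functors from Proposition~\ref{prop:extendable}, but that proposition establishes \emph{right extendability} of the input, not preservation of totalizations by the resulting nlRL extension. You need to invoke the appropriate result of Holeman (Theorem~2.2.20 of~\cite{holeman-derived}) which governs when nlRL extensions commute with finite totalizations, applied after reducing via the same associated-graded/mod-$p$ dévissage. As written your argument for (ii) does not close this step.
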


\begin{proof}
    Let
    $F_Q\colon\DAlg_{\bF_p}\rightarrow\CAlg(\FDhat(\F^\star_\ev Q)_p^\wedge)$
    be the non-linear right left extension of the restriction
    of $\F^\star_\syn\bWhat_Q((-)_\perf)$ to $\Poly_{\bF_p}^\omega$ as
    in~\cite[Const.~2.2.17]{holeman-derived}, which applies because the restriction is right
    left extendable by Proposition~\ref{prop:extendable}.
    We will show that $F_Q$ factors through the localization
    $\DAlg_{\bF_p}\rightarrow\DAlg_{\bF_p}^\perf$ and we will write $\F^\star_\syn\bWhat_Q$ for the
    induced functor $\DAlg_{\bF_p}^\perf\rightarrow\CAlg(\FDhat(\F^\star_\ev Q)_p^\wedge)$.
    
    In the case of $Q=\bZ_p$ the filtration is trivial and we view $F=F_{\bZ_p}$ as taking
    values in $p$-complete $\bE_\infty$-$\bZ_p$-algebras. As derived commutative rings are closed
    under limits and colimits in $\bE_\infty$-$\bZ_p$-algebras and as
    $\DAlg_{\bF_p}^\perf\subseteq\DAlg_{\bF_p}$ is closed under limits and colimits, one obtains a
    canonical refinement $\DAlg_{\bF_p}\rightarrow\DAlg_{\bF_p}^\perf$ of $F\otimes_{\bZ_p}\bF_p$.
    Call the resulting functor $F_{\bF_p}$.
    As $(-)\otimes_{\bZ_p}\bF_p\colon\DAlg_{\bZ_p}^{\wedge}\rightarrow\DAlg_{\bF_p}$
    preserves colimits and limits, it follows that $F_{\bF_p}$ is the non-linear right left Kan extension of the
    functor $(-)_\perf\colon\Poly_{\bF_p}^\omega\rightarrow\DAlg_{\bF_p}^\perf$ to $\DAlg_{\bF_p}$.
    It must agree with the localization $(-)_\perf\colon\DAlg_{\bF_p}\rightarrow\DAlg_{\bF_p}^\perf$
    by Example~\ref{ex:perfection}. This proves that $F_{\bZ_p}$ factors through 
    the localization and proves (i).

    Arguing using associated graded pieces as in the proof of
    Proposition~\ref{prop:extendable} shows that $\gr^\star
    F_Q(k)\we(\bW(k)\otimes_{\bZ_p}\gr^\star_\ev Q)_p^\wedge$ for $k\in\DAlg_{\bF_p}$. Thus, $F_Q$
    factors through $(-)_\perf$, as desired. This shows that the functor $\F^\star_\syn\bWhat_Q$
    exists and also establishes (iv).

    The functor $\F^\star_\syn\bWhat_Q(-)$ preserves
    finite totalizations by arguing that $k\mapsto \gr^0_\ev\bWhat_Q(k)/p\we k\otimes_{\bF_p}\gr^0_\ev Q/p$
    does using~\cite[Thm.~2.2.20]{holeman-derived} and lifting to the entire associated graded as in
    the proof of Proposition~\ref{prop:extendable}. It preserves sifted colimits
    by construction~\cite[Const.~2.2.17]{holeman-derived}.
    When restricted to perfect $\bF_p$-algebras, the nonconnective spherical Witt vectors functor
    agrees with the one constructed in Definition~\ref{def:synqw} by~\cite[Obs.~2.2.19]{holeman-derived}. 
    This proves (ii) and (iii).
\end{proof}

\begin{example}
    We let $\F^\star_\syn\widehat{\bS\bW}=\F^\star_\syn\bWhat_{\bS_p}$.
\end{example}

\begin{remark}
    A nonconnective Witt vector functor $\bW$ has been constructed
    in~\cite[Def.~2.3.13]{holeman-derived}, but with target the $\infty$-category of derived
    $\delta$-rings. Forgetting the $\delta$-ring structure and restricted to perfect derived
    commutative $\bF_p$-algebras, one recovers our functor $\bW=\bW_{\bZ_p}=\bWhat_{\bZ_p}$, as we
    will see in Corollary~\ref{cor:compare}.
\end{remark}

\begin{corollary}\label{cor:swcochains}
    Let $Q$ be a $p$-complete $\bE_\infty$-ring and let $\F^\star_\ev Q$ be the $p$-complete even filtration on
    $Q$. Suppose that $\gr^0Q$ is a discrete commutative ring and that $\gr^iQ$ is a perfect $\gr^0Q$-module for each $i\in\bZ$.
    If $\F^\star_\ev Q$ is complete and exhaustive and
    if $X\in\Sscr^\omega$, then there is a natural equivalence
    $$\bWhat_Q(\bF_p^X)\we Q^X.$$
\end{corollary}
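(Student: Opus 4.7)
The plan is to prove the statement first at the synthetic/filtered level using Theorem~\ref{thm:nonconnective}(ii)--(iii), and then extract the underlying $\bE_\infty$-algebra by exploiting the fact that filtered colimits commute with finite limits in stable $\infty$-categories.

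First I would verify that for $X\in\Sscr^\omega$ the cochain algebra $\bF_p^X=\lim_X\bF_p$ is a perfect derived commutative $\bF_p$-algebra. By Lemma~\ref{lem:animated} (and its proof), the inclusion $\DAlg_{\bF_p}^\perf\hookrightarrow\DAlg_{\bF_p}$ admits $(-)_\perf$ as a left adjoint and hence preserves limits, so $\bF_p^X$ naturally lies in $\DAlg_{\bF_p}^\perf$. This is what is needed for $\bWhat_Q$ and $\F^\star_\syn\bWhat_Q$ to apply.

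Next, since $\Sscr^\omega$ is the smallest full subcategory of $\Sscr$ closed under finite colimits and containing the point, the limit diagram $\lim_X\bF_p$ in $\DAlg_{\bF_p}^\perf$ can be built up from points by iterated pullbacks, one for each cell attachment in a cellular filtration of $X$. By Theorem~\ref{thm:nonconnective}(ii), $\F^\star_\syn\bWhat_Q$ preserves finite totalizations (and in particular pullbacks), while by Theorem~\ref{thm:nonconnective}(iii) together with Definition~\ref{def:synqw} and the identification $\bS\bW(\bF_p)\simeq\bS_p$ one has $\F^\star_\syn\bWhat_Q(\bF_p)\simeq\F^\star_\ev Q$. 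A straightforward induction on the cellular filtration of $X$ then yields a natural equivalence
\begin{equation*}
    \F^\star_\syn\bWhat_Q(\bF_p^X)\;\simeq\;\lim_X\F^\star_\ev Q
\end{equation*}
in complete $p$-complete filtered $\bS_\syn$-algebras.

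Finally I would pass to underlying $\bE_\infty$-$Q$-algebras. The underlying functor is the filtered colimit along the filtration, and in the stable setting such filtered colimits commute with finite limits. Using the exhaustiveness hypothesis that the underlying of $\F^\star_\ev Q$ is $Q$, this gives
\begin{equation*}
    \bWhat_Q(\bF_p^X)\;\simeq\;\lim_X Q\;=\;Q^X,
\end{equation*}
which is the desired equivalence.

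The hard part will be the inductive reduction of $\lim_X$ to finite pullbacks so that Theorem~\ref{thm:nonconnective}(ii) really applies: once the base case $X=\ast$ (where both sides are $\F^\star_\ev Q$, resp.\ $Q$) and the pullback-preservation step are in hand, the passage along the cellular filtration of $X\in\Sscr^\omega$ and the final commutation of the filtered colimit with the finite limit $\lim_X$ are standard.
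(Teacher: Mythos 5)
Your proof is correct and proceeds by essentially the same strategy as the paper's: identify $\bF_p^X$ as a finite limit of copies of $\bF_p$, apply the preservation of finite totalizations from Theorem~\ref{thm:nonconnective}(ii) (the paper's proof mis-cites this as ``(i)''), use Definition~\ref{def:synqw} together with $\bS\bW(\bF_p)\simeq\bS_p$ to see $\F^\star_\syn\bWhat_Q(\bF_p)\simeq\F^\star_\ev Q$, and then commute the underlying-object (colimit) functor past the finite limit using exhaustiveness. The only cosmetic difference is that the paper realizes $\bF_p^X$ as a single finite totalization coming from a finite geometric realization of $X$, whereas you decompose $X$ cell by cell and argue by an induction with pullbacks at each step; these are interchangeable ways of expressing the same finite-limit reduction, since pullbacks are themselves computable as finite (2-coskeletal) totalizations.
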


\begin{proof}
    This follows from the fact that $\bF_p^X$ is a finite totalization (as $X$ can be realized by a finite geometric
    realization), the fact that $\F^\star_\syn\bWhat_Q$ commutes with finite totalizations by
    Theorem~\ref{thm:nonconnective}(i), and the
    fact that the colimit functor $\FDhat(\F^\star_\ev Q)\rightarrow\D(Q)$ preserves finite limits.
\end{proof}

We return to this theme in Section~\ref{sec:synthetic}.

\section{Rational spaces}\label{sec:rational}

We recall a part of Sullivan's cochain-theoretic approach~\cite{sullivan-infinitesimal} to rational homotopy theory,
ignoring the crucial role of minimal models. For more details,
see~\cite{bousfield-gugenheim,dag13}.

\begin{definition}[$\bQ$-finite spaces]
    Let $\Sscr_{\bQ\fin}$ be the
    $\infty$-category of $\bQ$-finite spaces: a space is $\bQ$-finite if it is $n$-truncated for some
    $n$, it has finitely many components, and each component $Y\subseteq X$ satisfies the following
    conditions:
    \begin{enumerate}
        \item[(1)] $\pi_1(Y)$ is nilpotent and admits a finite central series with finite-dimensional $\bQ$-vector
            spaces as graded pieces;
        \item[(2)] $\pi_1(Y)$ acts nilpotently on each $\pi_i(Y)$ for $i\geq 2$;
        \item[(3)] each $\pi_i(Y)$ is a
            finite-dimensional $\bQ$-vector space for $i\geq 2$.
    \end{enumerate}
\end{definition}

\begin{definition}[Stone algebras]
    Let $k$ be a commutative ring whose only idempotents are $0$ and $1$. We let $\CAlg_k^\Stone\subseteq\CAlg_k$ be the smallest 
    subcategory of $\CAlg_k$ generated under filtered colimits by the commutative $k$-algebras $k^S$ where
    $S$ is finite. The objects of $\CAlg_k^\Stone$ are called Stone $k$-algebras.
    As the class of algebras $k^S$ consists of compact objects and is closed under
    finite colimits and retracts
    in $\CAlg_k$, it follows that $\CAlg_k^\Stone$ is presentable and compactly generated by the
    $k^S$ where $S$ is finite. The inclusion $\CAlg_k^\Stone\rightarrow\CAlg_k$ admits a right
    adjoint. Let $\DAlg_k^\Stone\subseteq\DAlg_k^\ccn$ be the full subcategory of
    coconnective derived commutative $k$-algebras $R$ such that $\pi_0R$ is a Stone $k$-algebra.
\end{definition}

\begin{warning}
    While $\CAlg_{\bQ}^\Stone$ is presentable, $\DAlg_{\bQ}^\Stone$ is not. For example, the pushout
    of $\bQ\leftarrow\bQ^{S^1}\rightarrow\bQ$ does not exist in $\DAlg_{\bQ}^\Stone$. Thus,
    $\DAlg_\bQ^\Stone\subseteq\DAlg_\bQ$ is an accessible subcategory, but the inclusion does not
    admit a right adjoint.
\end{warning}

\begin{remark}
    If $k$ is a commutative $\bQ$-algebra, then $\DAlg_k^{\ccn}$ can be understood via the theory of
    commutative differential graded $k$-algebras.
\end{remark}

The category of Stone $k$-algebras does not depend on $k$.
Let $\Fin$ denote the category of finite sets.

\begin{lemma}\label{lem:stone}
    For any commutative ring whose only idempotents are $0$ and $1$,
    there is a natural equivalence $\CAlg_k^\Stone\we\Pro(\Fin)^\op$.
\end{lemma}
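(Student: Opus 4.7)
The plan is to identify the compact objects of $\CAlg_k^\Stone$ with the opposite of $\Fin$ and then invoke the universal property of $\Ind$-completions. The key input is that for a ring $k$ with only trivial idempotents, the functor $S \mapsto k^S$ is fully faithful from $\Fin^\op$ into $\CAlg_k$.

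First I would check the Hom calculation $\Hom_{\CAlg_k}(k^S, k^T) \iso \Hom_{\Fin}(T,S)$ for finite sets $S$ and $T$. Geometrically, $\Spec(k^S)$ is a disjoint union of $|S|$ copies of the connected scheme $\Spec k$ (connectedness using the idempotent hypothesis on $k$), so a map $\Spec(k^T) \to \Spec(k^S)$ must send each of the $|T|$ components into exactly one component of the target, yielding a function $T\to S$; conversely any such function induces a $k$-algebra map. Thus the full subcategory $\Cscr \subseteq \CAlg_k$ spanned by the $k^S$ for $S \in \Fin$ is equivalent to $\Fin^\op$.

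Next I would show the $k^S$ are compact in $\CAlg_k^\Stone$ (equivalently, in $\CAlg_k$, since the inclusion $\CAlg_k^\Stone \to \CAlg_k$ preserves filtered colimits) and that the inclusion $\Cscr \hookrightarrow \CAlg_k^\Stone$ extends to an equivalence $\Ind(\Cscr) \we \CAlg_k^\Stone$. Compactness of $k^S$ follows from the fact that $\Hom_{\CAlg_k}(k^S, -) \we \Hom_{\Fin}(S, \Hom_{\CAlg_k}(k, -))$ is a finite product of corepresentables and hence commutes with filtered colimits. By the definition of $\CAlg_k^\Stone$ as the closure of these objects under filtered colimits, together with the hypothesis (already noted in the excerpt) that the $k^S$ are closed under finite colimits and retracts in $\CAlg_k$, Lurie's characterization of compactly generated $\infty$-categories (\cite[5.5.1.1, 5.3.5.11]{htt}) gives $\CAlg_k^\Stone \we \Ind(\Cscr^{\mathrm{idem}})$. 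One must verify $\Cscr$ is already idempotent-complete: a retract of $k^S$ in $\CAlg_k$ is of the form $k^{S'}$ for some $S' \subseteq S$ obtained by splitting an idempotent in $k^S$, whose idempotents are precisely the characteristic functions of subsets of $S$ (using again that $k$ itself has only trivial idempotents).

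Combining the two steps, $\CAlg_k^\Stone \we \Ind(\Fin^\op) \we \Pro(\Fin)^\op$, which is the desired equivalence. The main subtlety is the idempotent-completeness verification; everything else is a formal consequence of the hypothesis on idempotents in $k$ and of the setup recorded in the definition of $\CAlg_k^\Stone$.
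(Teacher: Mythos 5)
Your proposal follows the same route as the paper (identify the full subcategory on the $k^S$ with $\Fin^\op$ via the idempotent hypothesis, then pass to $\Ind$); the paper's proof is just two sentences and you fill in the details. However, there is a genuine error in your compactness argument. You write
$\Hom_{\CAlg_k}(k^S,-)\we\Hom_{\Fin}(S,\Hom_{\CAlg_k}(k,-))$,
but $k$ is the \emph{initial} object of $\CAlg_k$, so $\Hom_{\CAlg_k}(k,-)$ is constant at a point and the right-hand side is constant at a point; this cannot agree with $\Hom_{\CAlg_k}(k^S,-)$, which for example equals $S$ when evaluated at $k$. The confusion is that $k^S$ is a finite \emph{product} of copies of $k$ in $\CAlg_k$, not a coproduct, so $\Hom$ out of it is not a product of $\Hom$s out of $k$. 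The correct and most direct justification for compactness is that $k^S$ is a finitely presented $k$-algebra: for $|S|=n$, $k^S\iso k[e_1,\dots,e_n]/(e_i^2-e_i,\ e_ie_j\ (i\neq j),\ \sum_i e_i-1)$. (Equivalently, $k^S$ is a retract of a finite tensor power of $k[x]/(x^2-x)$.) Note also that the paper already records compactness and closure under finite colimits and retracts in the definition of $\CAlg_k^\Stone$, so this step did not need an independent argument.

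Two smaller remarks. First, your phrase ``splitting an idempotent in $k^S$'' conflates idempotent \emph{elements} of the ring $k^S$ with idempotent \emph{endomorphisms} of $k^S$ in $\CAlg_k$; what is relevant for retracts is the latter, and under the $\Hom$-identification these correspond to idempotent self-maps of $S$ in $\Fin$, which split because $\Fin$ is idempotent-complete. Second, the paper's reduction to ``$\Hom_k(k^{\{0,1\}},k)\iso\{0,1\}$'' is just the algebraic version of your geometric component-counting argument: a $k$-algebra map $k^S\to k$ is a choice of orthogonal idempotents $(e_s)_{s\in S}$ in $k$ summing to $1$, and the idempotent hypothesis forces exactly one to be $1$. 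Both formulations are fine.
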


\begin{proof}
    Indeed, the category of commutative $k$-algebras of the form $k^S$ for $S$ a finite set is
    equivalent to $\Fin^\op$ and $\Ind(\Fin^\op)\we\Pro(\Fin)^\op$.
    One reduces to checking that $\Hom_k(k^{\{0,1\}},k)\iso\{0,1\}$.
\end{proof}

\begin{theorem}[Sullivan]\label{thm:sullivan}
    The functor $\Sscr^\op_{\bQ\fin}\rightarrow\DAlg_{\bQ}^\ccn$ given by $X\mapsto\bQ^X$ factors
    through $\DAlg_\bQ^\Stone$ and extends
    to a fully faithful functor $\Pro(\Sscr_{\bQ\fin})^\op\rightarrow\DAlg_{\bQ}^\Stone$.
\end{theorem}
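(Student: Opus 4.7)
The plan is to first verify the factorization, then construct the pro-extension as a filtered-colimit extension, and finally to prove full faithfulness by combining the classical Sullivan--Bousfield--Gugenheim theorem with a compactness property of $\bQ^X$ in $\DAlg_\bQ^\Stone$.

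For the factorization, each $X \in \Sscr_{\bQ\fin}$ has finitely many components, so $\pi_0(\bQ^X) \cong \bQ^{\pi_0 X}$ is a finite product of copies of $\bQ$, hence a compact Stone $\bQ$-algebra by Lemma~\ref{lem:stone}; thus $\bQ^X \in \DAlg_\bQ^\Stone$. The extension to $\Pro(\Sscr_{\bQ\fin})^\op \simeq \Ind(\Sscr_{\bQ\fin}^\op)$ is the filtered-colimit extension: a pro-object represented by a cofiltered diagram $(X_i)$ in $\Sscr_{\bQ\fin}$ is sent to $\colim_i \bQ^{X_i}$ computed in $\DAlg_\bQ^\ccn$. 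Since $\pi_0$ commutes with filtered colimits, coconnectivity is preserved under filtered colimits, and Stone algebras are closed under filtered colimits in $\CAlg_\bQ$ by construction, this colimit lies in $\DAlg_\bQ^\Stone$.

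Full faithfulness of this ind-extension reduces to two statements: (a) $\bQ^{(-)} \colon \Sscr_{\bQ\fin}^\op \to \DAlg_\bQ^\Stone$ is fully faithful; (b) each $\bQ^X$ is compact in $\DAlg_\bQ^\Stone$ with respect to filtered colimits. Statement (a) is the Sullivan--Bousfield--Gugenheim theorem (see \cite{sullivan-infinitesimal, bousfield-gugenheim, dag13}): by Postnikov induction and the nilpotence and finite-dimensionality hypotheses, one reduces to the Eilenberg--Mac Lane case, where $\bQ^{K(\bQ,n)} \simeq \LSym_\bQ(\bQ[-n])$ is the free derived commutative $\bQ$-algebra on a class in degree $n$, and mapping out of it recovers $H^n(-;\bQ)$. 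For (b), each component $Y_\alpha$ of $X$ admits a finite principal Postnikov tower with $K(\bQ,n)$-fibers, which under cochains becomes a finite iterated pushout in $\DAlg_\bQ$ of copies of $\LSym_\bQ(\bQ[-n])$ over $\bQ$. Each $\LSym_\bQ(\bQ[-n])$ is compact in $\DAlg_\bQ$ since $\Map_{\DAlg_\bQ}(\LSym_\bQ(\bQ[-n]),-) \simeq \pi_{-n}(-)$ commutes with filtered colimits; a finite pushout of compacts is compact; and the finite-product structure $\bQ^X = \prod_\alpha \bQ^{Y_\alpha}$ is handled by first splitting any target $A \in \DAlg_\bQ^\Stone$ using the Stone idempotents of $\bQ^{\pi_0 X}$.

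The main obstacle is step (b): one must carefully verify that the Postnikov decomposition of a $\bQ$-finite nilpotent space translates into a finite pushout square in $\DAlg_\bQ$ at the level of cochains, identifying $\bQ^E \simeq \bQ^B \otimes^\L_{\bQ^{K(\bQ,n+1)}} \bQ$ for a principal fibration $K(\bQ,n) \to E \to B$ classified by a class in $H^{n+1}(B;\bQ)$. In characteristic zero this is tractable because $\DAlg_\bQ \simeq \CAlg_\bQ^{\bE_\infty}$, so the derived pushout can be computed at the $\bE_\infty$-level and compared to the geometric pullback via an Eilenberg--Moore style argument combined with the finite-dimensionality hypotheses on the rational homotopy groups.
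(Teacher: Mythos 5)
Your proposal is correct and takes essentially the same route as the paper: factor through $\DAlg_\bQ^\Stone$ via the finite set of components, extend by left Kan extension (filtered colimits), reduce full faithfulness to mapping into $\bQ$, handle Eilenberg--Mac Lane spaces by identifying $\bQ^{K(V,n)}$ as a free object, and climb the principal Postnikov tower using the Eilenberg--Moore theorem for nilpotent fibrations, all finishing with the compactness of the $\bQ^{K(\bQ,n)}$ and $\bQ^S$ to pass to $\Pro(\Sscr_{\bQ\fin})^\op$. The paper proves step (a) from scratch where you cite Sullivan--Bousfield--Gugenheim, and is terser than you on the compactness step, but the ideas and the key lemmas match.
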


\begin{proof}
    If $X\in\Sscr_{\bQ\fin}$, then $X$ has finitely many connected components, so
    $\H_0(\bQ^X)\iso\H^0(X,\bQ)$ is a Stone $\bQ$-algebra. This proves that the functor lands in
    $\DAlg_{\bQ}^\Stone$. Consider now the left Kan extension
    $\Pro(\Sscr_{\bQ\fin})^\op\rightarrow\DAlg_{\bQ}^\Stone$.
    For $X,Y\in\Sscr$,
    both sides of $\Map_\Sscr(Y,X)\rightarrow\Map_{\DAlg_{\bQ}^\ccn}(\bQ^X,\bQ^Y)$
    take colimits of $Y$ to limits, so one can reduce fully faithfulness on $\Sscr_{\bQ\fin}^\op$ to
    the case where $Y$ is contractible. In other words, for $X\in\Sscr_{\bQ\fin}$, we want to show
    that the map $X\rightarrow\Map_{\DAlg_{\bQ}^\ccn}(\bQ^X,\bQ)$ is an equivalence.
    If $X$ is an Eilenberg--Mac Lane space of type $K(V,n)$ for $n\geq 1$ for $V$
    finite-dimensional, the theorem holds by reduction to the case of $V\we\bQ$ by K\"unneth and
    then to $n=1$ by Eilenberg--Moore.
    By nilpotence and induction, we can assume that $X$ is obtained as a
    pullback
    $$\xymatrix{
        X\ar[r]\ar[d]&\ast\ar[d]\\
        Z\ar[r]&K(V,n+1),
    }$$
    where $V$ is a finite-dimensional $\bQ$-vector space, the natural map
    $Z\rightarrow\Map_{\DAlg_{\bQ}^\ccn}(\bQ^Z,\bQ)$ is an equivalence, and $n\geq 1$.
    Nilpotence and the Eilenberg--Moore theorem (see for example~\cite[Cor.~1.1.10]{dag13}) guarantee that
    $\bQ\otimes_{\bQ^{K(V,n+1)}}\bQ^Z\rightarrow \bQ^X$ is an equivalence, which gives the
    inductive step. This proves fully faithfulness on $\Sscr_{\bQ\fin}^\op$. For each $n\geq 1$ and
    each finite set $S$,
    $\bQ^{K(\bQ,n)}$ and $\bQ^S$ are compact in $\DAlg_{\bQ}^\ccn$, from which it follows that the functor is fully faithful on all of
    $\Pro(\Sscr_{\bQ\fin})^\op$.
\end{proof}

\begin{remark}
    We expect that $\Pro(\Sscr_{\bQ\fin})\rightarrow\DAlg_{\bQ}^\Stone$ is an equivalence, but do
    not have a proof of this at the moment.
\end{remark}

Sullivan's theorem gives a template we will follow to study $p$-adic and integral models for spaces.
It is more straightforward to establish because the rational cohomology of the Eilenberg--Mac Lane spaces
$K(\bQ,n)$ for $n\geq 1$ is easy to understand by iterating the Leray--Serre spectral sequence
starting from $n=1$ and because we know by the theory of rational cdgas
that $\bQ^{K(\bQ,n)}$ is compact.

\begin{definition}[Continuous rational cohomology]
    Given a pro-$\bQ$-finite space $X$, we write $\bQ^X$ for the image of $X$ under the equivalence
    of Theorem~\ref{thm:sullivan}. This derived commutative $\bQ$-algebra computes the continuous
    rational cohomology of $X$. Specifically, if we write $X\we\{\lim_i X_i\}$ as a cofiltered diagram where each $X_i$ is
    $\bQ$-finite, then $\bQ^X\we\colim_i\bQ^{X_i}$. If $X\we\lim\tau_{\leq i}X$ where each
    $\tau_{\leq i}X$ is $\bQ$-finite, then the continuous rational cohomology of the tower
    $\{\tau_{\leq i}X\}$ agrees with
    the usual rational cohomology of the limit.
\end{definition}

\section{$p$-adic spaces}\label{sec:padic}

In this section, we fix a prime $p>0$ and we give several $p$-adic analogs of Sullivan's theorem.

\subsection{Derived $p$-Boolean rings}\label{sub:pboolean}

Let $k$ be a perfect ring of characteristic $p$. The full subcategory
$\CAlg_k^\perf\subseteq\CAlg_k$ of perfect commutative $k$-algebras admits a derived analog.

\begin{definition}[Perfect derived commutative $k$-algebras]
    Let $k$ be a perfect ring of characteristic $p$ and let $\DAlg_k$ be the $\infty$-category of
    derived commutative $k$-algebras.
    In~\cite[Const~2.4.1]{holeman-derived}, Holeman shows that every derived commutative $\bF_p$-algebra admits a Frobenius
    $\varphi$ endomorphism, giving an action of $\B\bN$ on $\DAlg_{\bF_p}$ (since the Frobenius on
    $\bF_p$ is the identity).
    This fact also follows
    from~\cite[Thm.~5.20]{brantner-campos-nuiten}. A derived commutative $k$-algebra is
    perfect if its Frobenius is an equivalence. Let $\DAlg_k^\perf\subseteq\DAlg_k$ be the
    full subcategory of perfect derived commutative $k$-algebras.
\end{definition}

\begin{lemma}
    Let $k$ be a perfect commutative $\bF_p$-algebra.
    Any perfect derived commutative $k$-algebra is coconnective.
    The inclusion $$\CAlg_{k}^\perf\subseteq\DAlg_{k}^\perf$$ admits a right adjoint given by $\pi_0$.
\end{lemma}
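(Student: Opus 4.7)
The plan is to deduce the coconnectivity claim from the numerical content at the heart of the proof of Proposition~\ref{prop:discrete}, and then to derive the adjointness formally from the $t$-structure on $\DAlg_k$.

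First I would argue that for $R \in \DAlg_{\bF_p}$ the derived commutative Frobenius $\varphi_R$ (from \cite[Const.~2.4.1]{holeman-derived} or equivalently~\cite[Thm.~5.20]{brantner-campos-nuiten}) induces, on the underlying $\bE_\infty$-space $\Omega^\infty R$ with its multiplicative structure based at $1$, the map $\varphi_p$ of Definition~\ref{def:pperfect}. In the animated case this is the comparison cited in the footnote to Lemma~\ref{lem:animated}; in the nonconnective case the same identification holds since the Frobenius is compatible with the forgetful functor to $\bE_\infty$-rings. Since every $\bE_\infty$-space is simple at its unit and since each $\pi_i(\Omega^\infty R, 1) \cong \pi_i R$ is an $\bF_p$-module for $i \geq 1$, the calculation inside the proof of Proposition~\ref{prop:discrete} yields a factorization of $\varphi_{p,*}$ on $\pi_i R$ through multiplication by $p$, and hence $\varphi_{p,*}=0$ on $\pi_i R$ for $i \geq 1$. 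If $R \in \DAlg_k^\perf$, then $\varphi_R$ is by hypothesis an equivalence, so $\pi_i R = 0$ for $i \geq 1$, i.e., $R$ is coconnective. (Only the divisibility-by-$p$ half of Proposition~\ref{prop:discrete} is needed; the derived $p$-completeness hypothesis there was used to pass to an inverse limit, which is unnecessary here as $p = 0$ on $\pi_i R$.)

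For the adjointness, coconnectivity gives $\tau_{\geq 0} R = \pi_0 R$, and the Frobenius of $R$ descends to an isomorphism on the discrete ring $\pi_0 R$, so $\pi_0 R \in \CAlg_k^\perf$ and $\pi_0$ defines a functor $\DAlg_k^\perf \to \CAlg_k^\perf$. For $A \in \CAlg_k^\perf$, the object $A$ is discrete in $\DAlg_k$, hence connective, so
$$\Map_{\DAlg_k^\perf}(A, R) \simeq \Map_{\DAlg_k^\cn}(A, \tau_{\geq 0} R) \simeq \Map_{\DAlg_k^\cn}(A, \pi_0 R) \simeq \Hom_{\CAlg_k^\perf}(A, \pi_0 R),$$
where the first equivalence uses the adjunction between $\DAlg_k^\cn \hookrightarrow \DAlg_k$ and $\tau_{\geq 0}$, the second uses coconnectivity of $R$, and the third uses that mapping spaces in $\DAlg_k^\cn$ between discrete objects are the usual $\Hom$-sets of commutative rings (together with full faithfulness of $\CAlg_k^\perf \subseteq \CAlg_k$). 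This exhibits $\pi_0$ as right adjoint to $\CAlg_k^\perf \hookrightarrow \DAlg_k^\perf$.

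The main obstacle I anticipate is verifying the identification of $\varphi_R$ with the multiplicative $\bE_\infty$-space Frobenius $\varphi_p$ in the nonconnective setting; while the analogous comparison in the animated case is standard and cited in the footnote to Lemma~\ref{lem:animated}, here one must inspect Holeman's construction of the Frobenius on $\DAlg_{\bF_p}$ to see that it descends through the forgetful functor to the $\bE_\infty$-ring Frobenius. Once this is granted, everything else follows by the divisibility-by-$p$ argument already in Section~\ref{sec:swconnective} and by the $t$-structure formalism.
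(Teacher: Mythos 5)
Your argument reaches the right conclusion and the adjointness paragraph is fine, but the route is more circuitous than the paper's and the resolution you offer for the key step is not a valid argument.

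The paper's proof is one line: the Frobenius is a natural transformation of the identity on $\DAlg_{\bF_p}$, so naturality applied to the counit $\tau_{\geq 0}R\to R$ shows that $\tau_{\geq 0}\varphi_R=\varphi_{\tau_{\geq 0}R}$; since $\varphi_R$ is an equivalence, so is $\varphi_{\tau_{\geq 0}R}$, hence $\tau_{\geq 0}R$ is a perfect \emph{animated} $\bF_p$-algebra and is therefore discrete by the footnote in Lemma~\ref{lem:animated} (i.e., \cite[Prop.~11.6]{bhatt-scholze-witt}). That gives $\pi_iR=0$ for $i>0$ immediately, and the adjointness is then formal, just as you write.

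You instead try to run the divisibility-by-$p$ argument from Proposition~\ref{prop:discrete} directly on the nonconnective $R$, which requires identifying the derived Frobenius $\varphi_R$ with the multiplicative $\bE_\infty$-space Frobenius $\varphi_p$ on $\Omega^\infty R$. You correctly flag this as the main obstacle, but the justification you offer---``the Frobenius is compatible with the forgetful functor to $\bE_\infty$-rings''---is not an argument: there is no endomorphism-valued Frobenius on $\bE_\infty$-rings to be compatible with (the Nikolaus--Scholze Frobenius targets $R^{tC_p}$). The way to actually close this gap is to note that $\Omega^\infty R=\Omega^\infty(\tau_{\geq 0}R)$ and that $\tau_{\geq 0}\varphi_R$ is the Frobenius of the animated ring $\tau_{\geq 0}R$, so the identification reduces to the animated case cited in the footnote. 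But once you have passed to $\tau_{\geq 0}R$ and invoked the animated comparison, you have exactly reproduced the paper's proof; your extra detour through $\varphi_p$ on the nonconnective object buys nothing. So: the approach is salvageable, but the one-sentence bridge in your first paragraph is wrong as stated, and the correct bridge collapses your proof into the paper's.
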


\begin{proof}
    Indeed, if $R\in\DAlg_{k}^\perf$, then the natural map $\tau_{\geq 0}R\rightarrow\pi_0R$ is
    an equivalence by Lemma~\ref{lem:animated} as $\tau_{\geq 0}R$ is a perfect animated commutative
    $k$-algebra.
\end{proof}

\begin{lemma}\label{lem:adjoints1}
    Let $k$ be a perfect commutative $\bF_p$-algebra.
    The forgetful functors $\CAlg_{k}^\perf\rightarrow\CAlg_{k}$ and
    $\DAlg_{k}^\perf\rightarrow\DAlg_{k}$ admit both left and right adjoints.
\end{lemma}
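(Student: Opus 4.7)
My plan is to construct both adjoints explicitly via sequential (co)limits along Frobenius. For $R \in \DAlg_k$, set
$$R_\perf = \colim\bigl(R \xrightarrow{\varphi} R \xrightarrow{\varphi} R \to \cdots\bigr) \quad\text{and}\quad R^\perf = \lim\bigl(\cdots \xrightarrow{\varphi} R \xrightarrow{\varphi} R \xrightarrow{\varphi} R\bigr),$$
computed in $\DAlg_k$, which admits all small limits and sifted colimits. The analogous formulas will define the adjoints in $\CAlg_k$.

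The first step is to check that $R_\perf$ and $R^\perf$ lie in $\DAlg_k^\perf$. I would argue by the standard observation that the shift-by-one reindexing equivalence of the sequential diagram with itself identifies the induced Frobenius on the (co)limit with this shift map, which is manifestly an equivalence. This uses in an essential way that $\varphi$ is a natural transformation of the identity functor on $\DAlg_{\bF_p}$, recorded in~\cite[Const.~2.4.1]{holeman-derived} and~\cite[Thm.~5.20]{brantner-campos-nuiten}, and which restricts to $\DAlg_k$ since $k$ is perfect over $\bF_p$.

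Next, I would verify the universal properties. For the left adjoint of $\DAlg_k^\perf \hookrightarrow \DAlg_k$, this amounts to showing that for any perfect $S$ the tower $\cdots \to \Map_{\DAlg_k}(R,S) \to \Map_{\DAlg_k}(R,S)$ with transition maps given by precomposition with $\varphi_R$ has all transition maps equivalences, so its limit is $\Map_{\DAlg_k}(R,S)$. This is immediate from the naturality relation $f \circ \varphi_R \simeq \varphi_S \circ f$ for any $f \colon R \to S$: precomposition with $\varphi_R$ agrees with post-composition with $\varphi_S$, and the latter is an equivalence by perfectness of $S$. Dually, for the right adjoint the tower $\Map_{\DAlg_k}(S,R)$ has transition maps $\varphi_{R,*} \simeq \varphi_S^*$, again an equivalence. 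The case $\CAlg_k^\perf \subseteq \CAlg_k$ proceeds in the same way using the ordinary Frobenius and is a classical fact.

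The only mild subtlety I anticipate is unpacking the coherence of $f \circ \varphi_R \simeq \varphi_S \circ f$: it is not a strict equality but the coherent data of a natural transformation of $\infty$-functors, encoded by the $\B\bN$-action referenced above. Once this is in hand the argument is entirely formal, relying only on the existence of sequential (co)limits in $\DAlg_k$ and $\CAlg_k$.
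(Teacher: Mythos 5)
Your proof is correct and matches the approach the paper takes: both give the adjoints explicitly as the sequential (co)limits $R_\perf=\colim_\varphi R$ and $R^\perf=\lim_\varphi R$. The paper's proof is terser: it first cites closure of $\DAlg_k^\perf$ under all limits and colimits in $\DAlg_k$ (which, with accessibility, yields the adjoints abstractly by the adjoint functor theorem), and then simply states the formulas without verifying their universal properties. You instead verify the formulas directly; the check that the mapping-space towers have all transition maps equivalences, via the naturality identity $f\circ\varphi_R\simeq\varphi_S\circ f$ combined with perfectness of $S$, is exactly right, as is the shift argument showing the (co)limits land in $\DAlg_k^\perf$. The one point worth tightening is the clause "which restricts to $\DAlg_k$ since $k$ is perfect over $\bF_p$": for $k\neq\bF_p$ the Frobenius $\varphi_R\colon R\to R$ is only $\varphi_k$-semilinear, not $k$-linear, so the tower along $\varphi_R$ is not literally a diagram in $\DAlg_k$. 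To make your formulas well-posed one should either compute in $\DAlg_{\bF_p}$ and then observe that $R_\perf$ (resp.\ $R^\perf$) receives a canonical $k$-algebra structure via $k\simeq k_\perf\to R_\perf$, or equivalently insert a twist by $\varphi_k^{-1}$ at each stage (legitimate because $\varphi_k$ is an automorphism); with either patch your argument goes through unchanged.
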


\begin{proof}
    This follows from the fact that all limits and colimits of perfect derived commutative
    $k$-algebras are, when computed in $\DAlg_{k}$, still perfect. The left adjoint is given
    by the colimit perfection $R\mapsto R_\perf=\colim_\varphi R$ and the right adjoint is given by the inverse limit
    perfection $R\mapsto R^\perf=\lim_\varphi R$.
\end{proof}

\begin{corollary}\label{cor:monadic1}
    The $\infty$-categories $\CAlg_{k}^\perf$ and $\DAlg_{k}^\perf$ are monadic over
    $\Mod_{k}$ and $\D(k)$, respectively.
\end{corollary}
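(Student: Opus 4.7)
The plan is to apply the Barr--Beck--Lurie monadicity theorem to each of the composite forgetful functors
\[
\CAlg_{k}^{\perf}\longrightarrow \CAlg_{k}\longrightarrow \Mod_{k}\qquad\text{and}\qquad\DAlg_{k}^{\perf}\longrightarrow \DAlg_{k}\longrightarrow \D(k).
\]
One has to verify three things in each case: existence of a left adjoint, conservativity, and preservation of geometric realizations of split simplicial objects.

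The left adjoint in both cases is obtained as a composite of left adjoints. The outer forgetful functors $\CAlg_{k}\to\Mod_{k}$ and $\DAlg_{k}\to\D(k)$ admit left adjoints tautologically: the free commutative algebra $\Sym_k$ and the free derived commutative algebra $\LSym_k$ of~\cite{raksit}. The inner inclusions $\CAlg_k^{\perf}\subseteq\CAlg_k$ and $\DAlg_k^{\perf}\subseteq\DAlg_k$ admit left adjoints by Lemma~\ref{lem:adjoints1}, namely the colimit perfection $R\mapsto R_\perf$. Composing gives left adjoints in both settings.

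Conservativity is immediate: the outer forgetful functors are conservative (they are themselves monadic, so in particular detect equivalences), and the inner inclusions are fully faithful, hence conservative; the composites therefore detect equivalences. For preservation of $U$-split geometric realizations it suffices to check preservation of all sifted colimits. The inner inclusions $\CAlg_k^{\perf}\hookrightarrow\CAlg_k$ and $\DAlg_k^{\perf}\hookrightarrow\DAlg_k$ preserve all colimits, since by Lemma~\ref{lem:adjoints1} they admit right adjoints (the limit perfection). The outer forgetful functors are monadic and hence preserve sifted colimits (equivalently, split geometric realizations). Composing yields the required preservation.

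The main point that makes the argument work, and where all the content lies, is Lemma~\ref{lem:adjoints1}: the fact that both the left and right adjoints to the inclusion exist ensures that the inclusion preserves both limits and colimits. Without the right adjoint one could not conclude that sifted colimits in $\DAlg_k^{\perf}$ agree with those in $\DAlg_k$. Given this, Barr--Beck--Lurie applies and yields both monadicity statements at once.
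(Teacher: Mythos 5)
Your proposal is correct and follows essentially the same route as the paper: apply Barr--Beck to the composite forgetful functor $\DAlg_k^\perf\to\DAlg_k\to\D(k)$ (respectively $\CAlg_k^\perf\to\CAlg_k\to\Mod_k$), using Lemma~\ref{lem:adjoints1} to control the inner inclusion. The paper's proof is terser — it simply asserts that the composite preserves all limits and sifted colimits and cites Barr--Beck, leaving conservativity (the inner inclusion is fully faithful, the outer monadic functor is conservative) and the existence of the left adjoint implicit — whereas you spell out those verifications explicitly; the mathematical content is the same.
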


\begin{proof}
    We can consider the forgetful functor
    $$\DAlg_{k}^\perf\rightarrow\DAlg_{k}\rightarrow\D(k).$$
    The composition preserves all limits and sifted colimits. The result follows from Barr--Beck
    (see~\cite[Thm.~4.7.3.5]{ha}).
\end{proof}

\begin{remark}
    The actions of $\B\bN$ on $\CAlg_{\bF_p}$ and $\DAlg_{\bF_p}$ extend to actions of $\B\bZ$
    on $\CAlg_{\bF_p}^\perf$ and $\DAlg_{\bF_p}^\perf$.
\end{remark}

\begin{definition}[$p$-Boolean rings]
    Let $\CAlg_{\bF_p}^{\varphi=1}\subseteq\CAlg_{\bF_p}^\perf\subseteq\CAlg_{\bF_p}$ be the full
    subcategory of commutative $\bF_p$-algebras $R$ where $\varphi(x)=x^p=x$ for all $x\in R$. We call
    the objects $p$-Boolean rings since when $p=2$ these coincide with the Boolean rings in the
    usual sense.
\end{definition}

\begin{lemma}\label{lem:fixed}
    The natural inclusion
    $$\CAlg_{\bF_p}^{\varphi=1}\rightarrow(\CAlg_{\bF_p}^\perf)^{\h S^1}$$ is an equivalence of
    categories.
\end{lemma}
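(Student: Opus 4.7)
The plan is to unpack what an $S^1$-action on a $1$-category means, and then to observe that in this setting the homotopy fixed points collapse to those objects on which the acting automorphism of the identity is trivial. The $S^1 = \B\bZ$-action on $\CAlg_{\bF_p}^\perf$ is encoded by the automorphism $\varphi$ of the identity functor; more precisely, under the standard identification
\[
\Omega_{\CAlg_{\bF_p}^\perf}\iCat \simeq \Aut(\id_{\CAlg_{\bF_p}^\perf}),
\]
the pointed map $S^1 \to \iCat$ classifying the action corresponds to $\varphi$. Extending $\B\bN \to \Aut(\id_{\CAlg_{\bF_p}})$ to $\B\bZ \to \Aut(\id_{\CAlg_{\bF_p}^\perf})$ is permitted because Frobenius is invertible on perfect algebras.

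First, I would describe objects of $(\CAlg_{\bF_p}^\perf)^{\h S^1}$. Unwinding the limit description, an object is a pair $(R,h)$ consisting of $R \in \CAlg_{\bF_p}^\perf$ together with a null-homotopy of the loop $\varphi_R \in \pi_1(\B\Aut_\mathcal{C}(R))$ picked out by the action. The crucial point is that $\CAlg_{\bF_p}^\perf$ is a $1$-category, so $\Aut(R)$ is discrete for every $R$. Consequently $\B\Aut(R)$ is a $1$-type, and the space of null-homotopies of the class $\varphi_R$ is either empty or contractible; it is nonempty exactly when $\varphi_R = \id_R$, i.e.\ when $x^p = x$ for all $x \in R$. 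This shows that the objects of $(\CAlg_{\bF_p}^\perf)^{\h S^1}$ are canonically in bijection with the objects of $\CAlg_{\bF_p}^{\varphi = 1}$, and the fixed-point structure on each such object is canonical.

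Second, I would check that the inclusion is fully faithful on mapping spaces. A morphism $(R,h_R) \to (S,h_S)$ in $(\CAlg_{\bF_p}^\perf)^{\h S^1}$ is an $f \in \Map_{\CAlg_{\bF_p}^\perf}(R,S)$ equipped with a homotopy witnessing $\varphi_S \circ f \simeq f \circ \varphi_R$, compatibly with the trivializations $h_R$ and $h_S$. When $R,S \in \CAlg_{\bF_p}^{\varphi=1}$, both composites are literally equal to $f$, and the space of the required homotopies is again either empty or contractible, and is nonempty because the mapping space $\Map_{\CAlg_{\bF_p}^\perf}(R,S)$ is a set. So the space of morphisms in the fixed points agrees with the set $\Hom_{\CAlg_{\bF_p}}(R,S)$, matching $\CAlg_{\bF_p}^{\varphi=1}$.

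The main obstacle, in my view, is not any computation but rather pinning down what a $\B\bZ$-action on an $\infty$-category means precisely, and translating the definition of homotopy fixed points into the data of a null-homotopy of $\varphi_R$ in $\B\Aut(R)$. Once that translation is made, the $1$-categorical nature of $\CAlg_{\bF_p}^\perf$ forces the fixed points to be discrete and reduces the statement to the elementary observation that a perfect $\bF_p$-algebra has $\varphi_R = \id_R$ iff it lies in $\CAlg_{\bF_p}^{\varphi=1}$.
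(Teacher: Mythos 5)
The paper states this lemma without proof, so there is nothing to compare against; I will assess your argument on its own terms. Your strategy — unwind the homotopy fixed points and exploit the discreteness of automorphism groups in a $1$-category — is the right idea and gives the correct conclusion, but there is a genuine off-by-one error in the setup. You assert the identification $\Omega_{\CAlg_{\bF_p}^\perf}\Cat_\infty \simeq \Aut(\id_{\CAlg_{\bF_p}^\perf})$ and say the action is classified by a pointed map $S^1\to\Cat_\infty$. In fact $\Omega_{\mathcal{C}}\Cat_\infty \simeq \Aut(\mathcal{C})$ (autoequivalences) and it is $\Omega^2_{\mathcal{C}}\Cat_\infty$ that is $\Aut(\id_{\mathcal{C}})$. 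A pointed map $S^1\to\Cat_\infty$ would classify a $\bZ$-action, i.e.\ a single autoequivalence $T$; but the Frobenius is (as a functor) the identity, and the fixed points of the $\bZ$-action by $T = \id$ are $\Fun(\B\bZ,\CAlg_{\bF_p}^\perf)$ — pairs $(R,\alpha)$ with $\alpha$ an \emph{arbitrary} automorphism — which is not $\CAlg_{\bF_p}^{\varphi=1}$. The relevant action is of the group $S^1 = \B\bZ$, classified by a pointed map $\B S^1 \simeq \bC P^\infty \to \Cat_\infty$. Since $\CAlg_{\bF_p}^\perf$ is a $1$-category, $\B\Aut(\CAlg_{\bF_p}^\perf)$ is a $2$-type and such a map is determined by $\pi_2$, i.e.\ by the central automorphism $\varphi\in\Aut(\id)$. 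This is what makes the action nontrivial even though the underlying autoequivalence is the identity.

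With this corrected, the rest of your argument is sound. Computing $\lim_{\bC P^\infty}$ using the cell structure, only the $2$-cell contributes (all higher cells map into a $2$-type along a simply connected source, and moreover $\Fun(S^n, \mathcal{C}) = \mathcal{C}$ for $n\ge 2$ when $\mathcal{C}$ is a $1$-category), and the fiber over $R\in\CAlg_{\bF_p}^\perf$ is exactly the space of null-homotopies of the loop $\varphi_R\in\pi_1(\B\Aut(R))$; since $\Aut(R)$ is discrete this space is empty unless $\varphi_R = \id_R$, in which case it is contractible. Your morphism analysis is also fine — note that $\varphi_S\circ f = f\circ\varphi_R$ holds automatically by naturality, so the only potential extra data is compatibility with $h_R,h_S$, which is again vacuous at the level of $1$-categories. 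I'd encourage you to state the classifying map and its source/target correctly, since the whole point is that the invariant lives in $\pi_2(\B\Aut(\mathcal{C}))$, not $\pi_1$.
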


This motivates the following definition.

\begin{definition}[$p$-Boolean derived rings]
    A $p$-Boolean derived ring is an object of $\DAlg_{\bF_p}^{\varphi=1}$, which we define as
    $(\DAlg_{\bF_p}^\perf)^{\h S^1}$.
\end{definition}

\begin{lemma}\label{lem:adjoints2}
    The forgetful functors $\CAlg_{\bF_p}^{\varphi=1}\rightarrow\CAlg_{\bF_p}^\perf$ and
    $\DAlg_{\bF_p}^{\varphi=1}\rightarrow\DAlg_{\bF_p}^\perf$ admit both left and right adjoints.
\end{lemma}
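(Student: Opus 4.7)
The plan is formal. Under the identification of Lemma~\ref{lem:fixed} in the commutative case, and by definition in the derived case, both forgetful functors are instances of evaluation
\[
\ev_*\colon \Fun(BS^1,\mathcal{C}) \longrightarrow \mathcal{C}
\]
at the unique vertex of $BS^1$, where $\mathcal{C}$ is $\CAlg_{\bF_p}^\perf$ or $\DAlg_{\bF_p}^\perf$ respectively. For any $\infty$-category $\mathcal{C}$ admitting all small limits and colimits, $\ev_*$ has both adjoints, given by left and right Kan extension along $\{*\} \hookrightarrow BS^1$: the left adjoint sends $R$ to the colimit of the constant $BS^1$-shaped diagram at $R$ (the tensor $S^1 \otimes R$ with its natural $S^1$-action), and the right adjoint sends $R$ to the corresponding limit, namely the cotensor $R^{S^1}$.

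First, I would verify that $\CAlg_{\bF_p}^\perf$ and $\DAlg_{\bF_p}^\perf$ are bicomplete. By Lemma~\ref{lem:adjoints1}, the inclusions into the ambient bicomplete $\infty$-categories $\CAlg_{\bF_p}$ and $\DAlg_{\bF_p}$ admit both left and right adjoints, namely the colimit and limit perfections $(-)_\perf$ and $(-)^\perf$. A full subcategory which is simultaneously reflective and coreflective in a bicomplete $\infty$-category inherits all small limits and colimits (limits are created by the inclusion since it is a right adjoint, and colimits are created since it is also a left adjoint). Hence $\CAlg_{\bF_p}^\perf$ and $\DAlg_{\bF_p}^\perf$ each admit all small limits and colimits.

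Second, I would invoke the general existence of Kan extensions: since the source $\{*\}$ is small and $\mathcal{C}$ is bicomplete, both $\ev_*^L$ and $\ev_*^R$ exist pointwise. Combining this with the first step yields the desired left and right adjoints to the two forgetful functors in the statement.

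I do not anticipate any real obstacle. The only substantive input is Lemma~\ref{lem:adjoints1}, which supplies bicompleteness of the perfect subcategories; after that the result is purely a manifestation of the fact that, in a bicomplete $\infty$-category, evaluation from any diagram category at a vertex has both adjoints.
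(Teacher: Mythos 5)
Your argument for bicompleteness of $\CAlg_{\bF_p}^\perf$ and $\DAlg_{\bF_p}^\perf$ is correct and matches the implicit input from Lemma~\ref{lem:adjoints1}. However, the core of your argument rests on the identification
\[
(\DAlg_{\bF_p}^\perf)^{\h S^1} \we \Fun(BS^1,\DAlg_{\bF_p}^\perf),
\]
and this is false. The $S^1 = \B\bZ$-action on $\DAlg_{\bF_p}^\perf$ is not the trivial action: it is classified by a map $S^1\rightarrow\Aut(\DAlg_{\bF_p}^\perf)$ landing in the identity component, and the monodromy is the Frobenius natural autoequivalence of $\id$, which is \emph{not} nullhomotopic. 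The homotopy fixed points of this action are objects $R$ \emph{together with a trivialization} $\varphi_R\we\id_R$, whereas $\Fun(BS^1,\Cscr)$ parametrizes objects together with a loop at $\id_R$ in $\Aut(R)$ (no condition on Frobenius at all). The two are genuinely different. A sanity check in the discrete case: since $\CAlg_{\bF_p}^\perf$ is a $1$-category and $BS^1$ is simply connected, $\Fun(BS^1,\CAlg_{\bF_p}^\perf)\we\CAlg_{\bF_p}^\perf$; yet Lemma~\ref{lem:fixed} identifies $(\CAlg_{\bF_p}^\perf)^{\h S^1}$ with the strictly smaller subcategory $\CAlg_{\bF_p}^{\varphi=1}$. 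So your ``evaluation has both Kan extensions'' step does not apply as stated.

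The gap is repairable, and there is a short argument close in spirit to what you wrote. The limit $(\DAlg_{\bF_p}^\perf)^{\h S^1}=\lim_{BS^1}\alpha$ is a limit of presentable $\infty$-categories along colimit-preserving functors (they are equivalences), so it is computed in $\Pr^\L$. Consequently the projection $(\DAlg_{\bF_p}^\perf)^{\h S^1}\rightarrow\DAlg_{\bF_p}^\perf$ is a morphism in $\Pr^\L$ (hence preserves colimits and has a right adjoint) \emph{and} preserves limits (being a projection from a limit in $\Cat_\infty$). Since source and target are presentable and the functor is accessible and limit-preserving, the adjoint functor theorem yields a left adjoint as well. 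The paper instead establishes the left adjoint by the $\Pr^\L/\Pr^\R$ duality: because the $S^1$-action is by equivalences, passing to right adjoints and using $\B\bZ\we(\B\bZ)^\op$ identifies $(\DAlg_{\bF_p}^\perf)_{\h S^1}$ (colimit in $\Pr^\L$) with $(\DAlg_{\bF_p}^\perf)^{\h S^1}$, exhibiting the forgetful functor as the right adjoint of the structure map $\DAlg_{\bF_p}^\perf\rightarrow(\DAlg_{\bF_p}^\perf)_{\h S^1}$. Either route works; the point is that nothing here is a Kan extension along $\{*\}\hookrightarrow BS^1$, and you need the group structure on $S^1$ (not merely bicompleteness of $\DAlg_{\bF_p}^\perf$) to get the left adjoint.
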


\begin{proof}
    As $\Pr^\L$ is closed under limits, we can view the forgetful functors as morphisms in $\Pr^\L$.
    Hence, they admit right adjoints. Using the reasoning of the proof
    of~\cite[Lem.~7.3]{yuan-integral}, especially the appeal to~\cite[Cor.~2.11]{quigley-shah}, these right adjoints are computed by $$R\mapsto
    \ker(R\xrightarrow{1-\varphi}R)\quad\text{and}\quad R\mapsto\fib(R\xrightarrow{1-\varphi}R),$$
    respectively. Now, consider the functor
    $\DAlg_{\bF_p}^\perf\rightarrow(\DAlg_{\bF_p}^\perf)_{\h S^1}$ in $\Pr^\L$ where the homotopy
    $S^1$ orbits are computed in $\Pr^\L$. By~\cite[Thm.~5.5.3.18]{htt}, the recipe for computing
    this colimit is by passing to to right adjoints and computing the limit in $\Pr^\R$ or,
    equivalently, in $\widehat{\Cat}_\infty$. But, under the equivalence $\B\bZ\we(\B\bZ)^\op$
    of $\bE_1$-spaces,
    this limit is equivalent to $(\DAlg_{\bF_p}^\perf)^{\h S^1}$ and under this identification the forgetful functor is right
    adjoint to the projection to the homotopy fixed points. It follows that the map
    $\DAlg_{\bF_p}^{\varphi=1}\rightarrow\DAlg_{\bF_p}^\perf$ is also a right adjoint.
    The case of discrete commutative $\bF_p$-algebras is similar.
    The left adjoints are computed by taking $\bZ$-orbits in the relevant categories:
    $$R\mapsto R/(x-x^p:x\in R)\quad\text{and}\quad R\mapsto\mathrm{coeq}(\id,\varphi\colon
    R\rightrightarrows R).$$
    Here, the coequalizer is computed in $\DAlg_{\bF_p}^\perf$.
\end{proof}

\begin{corollary}\label{cor:monadic2}
    The $\infty$-categories $\CAlg_{\bF_p}^{\varphi=1}$ and $\DAlg_{\bF_p}^{\varphi=1}$ are monadic
    over $\Mod_{\bF_p}$ and $\D(\bF_p)$, respectively.
\end{corollary}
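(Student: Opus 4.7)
The plan is to mimic the argument for Corollary~\ref{cor:monadic1} by factoring the forgetful functor and verifying the hypotheses of the $\infty$-categorical Barr--Beck theorem~\cite[Thm.~4.7.3.5]{ha}. Concretely, I would consider the composition
$$\DAlg_{\bF_p}^{\varphi=1}\xrightarrow{U_1}\DAlg_{\bF_p}^\perf\xrightarrow{U_2}\DAlg_{\bF_p}\xrightarrow{U_3}\D(\bF_p)$$
and show it is conservative, preserves small limits and sifted colimits, and admits a left adjoint. From these three properties Barr--Beck gives monadicity, and the discrete case is handled in exactly the same way using $\CAlg_{\bF_p}^{\varphi=1}$, $\CAlg_{\bF_p}^\perf$, and $\Mod_{\bF_p}$.

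First I would check the existence of a left adjoint. The functor $U_3$ has left adjoint $\LSym_{\bF_p}$, while $U_2$ and $U_1$ each admit left adjoints by Lemma~\ref{lem:adjoints1} and Lemma~\ref{lem:adjoints2} respectively (the left adjoint to $U_1$ being the $\bZ$-orbits construction $R\mapsto\mathrm{coeq}(\id,\varphi\colon R\rightrightarrows R)$). Composing yields the desired left adjoint to $U=U_3\circ U_2\circ U_1$.

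Next I would check preservation of limits and sifted colimits. The functor $U_3$ preserves both by the argument already used in Corollary~\ref{cor:monadic1}. The functors $U_2$ and $U_1$ each admit both left and right adjoints (Lemmas~\ref{lem:adjoints1} and~\ref{lem:adjoints2}), so they preserve all limits and all colimits. Hence $U$ preserves limits and sifted colimits. For conservativity: $U_3$ is conservative because $\DAlg_{\bF_p}\rightarrow\D(\bF_p)$ detects equivalences; $U_2$ is conservative as the inclusion of a full subcategory; and $U_1$ is conservative because the forgetful functor from a homotopy fixed point $\infty$-category detects equivalences (a map in $(\DAlg_{\bF_p}^\perf)^{\h S^1}$ is an equivalence iff the underlying map in $\DAlg_{\bF_p}^\perf$ is). Thus $U$ is conservative.

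With these ingredients in place, Barr--Beck applies and shows $U$ is monadic, giving the theorem. The only subtle point, and the one I would take most care with, is the conservativity and adjointness of $U_1$, since it depends on identifying the inclusion $\DAlg_{\bF_p}^{\varphi=1}\hookrightarrow\DAlg_{\bF_p}^\perf$ with the forgetful functor from a homotopy fixed point $\infty$-category; but this is exactly the content of Lemma~\ref{lem:fixed} together with the computation of the two adjoints in Lemma~\ref{lem:adjoints2}, so no additional work is required. The discrete case is entirely parallel, replacing $\DAlg$ by $\CAlg$ and $\D(\bF_p)$ by $\Mod_{\bF_p}$ throughout.
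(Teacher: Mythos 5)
Your proposal is correct and takes essentially the same approach as the paper: the paper's proof simply says to follow the proof of Corollary~\ref{cor:monadic1} using Lemma~\ref{lem:adjoints2}, and you have unpacked exactly that factorization through $\DAlg_{\bF_p}^\perf$ and $\DAlg_{\bF_p}$ and verified the Barr--Beck hypotheses (existence of a left adjoint, preservation of sifted colimits, conservativity). You are also right to flag conservativity of $U_1$ as the one nonformal step, since in the derived case $\DAlg_{\bF_p}^{\varphi=1}$ is defined as a homotopy fixed point $\infty$-category rather than a full subcategory, and conservativity there comes from the fact that $\B\bZ$ is connected.
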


\begin{proof}
    Follow the proof of Corollary~\ref{cor:monadic1} using Lemma~\ref{lem:adjoints2}.
\end{proof}

\begin{example}\label{ex:free}
    The free $p$-Boolean derived ring on $n$ elements is discrete and is equivalent to
    $\bF_p^{\times p^n}$. It is enough to prove this for $n=1$ by counting
    dimensions. When $n=1$, the free $p$-Boolean ring is $\bF_p[x]/(x^p-x)$. The
    ideal in question is generated by $\prod_{a\in\bF_p}(x-a)$ so the quotient ring is isomorphic to
    $\prod_{a\in\bF_p}\bF_p$, as desired. Kubrak has pointed out that, more canonically, the free $p$-Boolean ring on a
    set $S$ is isomorphic to the group algebra of the abelian group obtained as an $S$-indexed coproduct of
    $\bF_p$.
\end{example}

\begin{remark}[Categories of models I]
    Let $\c\CAlg_{\bF_p}^\perf$ and $\c\CAlg_{\bF_p}^{\varphi=1}$ be the categories of cosimplicial
    perfect $\bF_p$-algebras and cosimplicial $p$-Boolean rings, respectively. Each admits the
    structure of a model category where the weak equivalences are obtained by the Dold--Kan
    correspondence. We expect that the resulting $\infty$-categories $\c\CAlg_{\bF_p}^\perf[W^{-1}]$ and
    $\c\CAlg_{\bF_p}^{\varphi=1}[W^{-1}]$ are equivalent to $\DAlg_{\bF_p}^\perf$ and
    $\DAlg_{\bF_p}^{\varphi=1}$. A proof could likely use the forthcoming work of Mathew and
    Mondal~\cite{mathew-mondal}, or~\cite[Thm.~5.20]{brantner-campos-nuiten}.
\end{remark}

The following result can be found in~\cite{stringall}; see also~\cite[Prop.~A.1.12]{dag12}.

\begin{theorem}[Stringall]\label{thm:comparison}
    For any primes $\ell,p$, there is a equivalence
    $\CAlg_{\bF_\ell}^{\varphi=1}\we\CAlg_{\bF_p}^{\varphi=1}$, i.e., the categories of $p$-Boolean
    rings are equivalent for all $p$.
\end{theorem}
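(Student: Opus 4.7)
The plan is to prove both sides of the claimed equivalence are independently equivalent to a $p$-free category, namely $\Stone^{\op} \simeq \Pro(\Fin)^{\op}$, in a manner very close to the proof of Lemma~\ref{lem:stone}. Once this is established for each prime separately, the equivalence $\CAlg_{\bF_\ell}^{\varphi=1} \simeq \CAlg_{\bF_p}^{\varphi=1}$ is automatic.

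The main step is to show that $\CAlg_{\bF_p}^{\varphi=1}$ is compactly generated by the finite products $\bF_p^S$ for $S \in \Fin$, and that these assemble into a full subcategory equivalent to $\Fin^{\op}$. Compact generation follows by observing that $\CAlg_{\bF_p}^{\varphi=1}$ is monadic over $\Mod_{\bF_p}$ by Corollary~\ref{cor:monadic2}, and by Example~\ref{ex:free} the free $p$-Boolean ring on a finite set $T$ is $\bF_p^{\bF_p^T}$, a finite product of copies of $\bF_p$. Since the forgetful functor preserves filtered colimits, any $R \in \CAlg_{\bF_p}^{\varphi=1}$ is a filtered colimit of the $p$-Boolean subrings generated by finite subsets of $R$, and each such finitely generated subring is a quotient of some free $\bF_p^{\bF_p^T}$. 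The content of identifying finite $p$-Boolean rings with $\Fin^{\op}$ then reduces to the fact that any quotient of $\bF_p^{S}$ in $\CAlg_{\bF_p}^{\varphi=1}$ is again of the form $\bF_p^{S'}$ for some $S' \subseteq S$: indeed, for any element $x$ of a $p$-Boolean ring one has $\prod_{a \in \bF_p}(x - a) = x^p - x = 0$, and distinct linear factors $x - a$ generate the unit ideal pairwise, so the Chinese Remainder Theorem forces any finitely generated $p$-Boolean ring to split as a finite product of copies of $\bF_p$. The equality $\Hom_{\CAlg_{\bF_p}}(\bF_p^S, \bF_p^T) \simeq \Hom_\Fin(T, S)$ is straightforward, parallel to the proof of Lemma~\ref{lem:stone}, reducing to computing $\Hom_{\CAlg_{\bF_p}}(\bF_p \times \bF_p, \bF_p) \cong \{0,1\}$.

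With the full subcategory of finite $p$-Boolean rings identified with $\Fin^{\op}$, compactness of each $\bF_p^S$ in $\CAlg_{\bF_p}^{\varphi=1}$ (inherited from compactness of $\bF_p$ in $\Mod_{\bF_p}$ via the monadic presentation) together with the filtered colimit presentation above gives an equivalence
\[
\CAlg_{\bF_p}^{\varphi=1} \;\simeq\; \Ind(\Fin^{\op}) \;\simeq\; \Pro(\Fin)^{\op},
\]
where the right-hand side is manifestly independent of $p$. The functor is explicitly $X \mapsto C(X, \bF_p)$ (continuous functions with $\bF_p$ discrete), with inverse $R \mapsto \Hom_{\CAlg_{\bF_p}^{\varphi=1}}(R, \bF_p)$ topologized as a profinite set via its pro-finite colimit presentation.

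The step I expect to be most delicate is verifying that the subcategory of finite $p$-Boolean rings is really all of $\bF_p^S$ and nothing more exotic — in particular the claim that finitely generated $p$-Boolean rings split as products of copies of $\bF_p$. This is really a classical fact about commutative rings of finite exponent, but one needs to be careful to handle the case of multiple generators by iterating the Chinese Remainder Theorem argument rather than hoping for a one-shot statement. An alternative route, which may be cleaner, is to construct the contravariant equivalence $R \mapsto \Spec R$ directly: for any $p$-Boolean $R$ every residue field is $\bF_p$ (since $R/\mathfrak{m}$ inherits $x^p = x$ and is a field, hence $\bF_p$), so $\Spec R$ is naturally a Stone space, and the evaluation map $R \to C(\Spec R, \bF_p)$ is the required isomorphism.
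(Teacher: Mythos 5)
Your proposal is correct and takes essentially the same approach as the paper: the paper's one-line proof asserts that $p$-Boolean rings agree with Stone $\bF_p$-algebras and then invokes Lemma~\ref{lem:stone} to conclude $\CAlg_{\bF_p}^{\varphi=1}\we\Pro(\Fin)^\op$, and your argument is a correct detailed verification of exactly that assertion (finitely generated $p$-Boolean rings split as finite products of $\bF_p$ by CRT, every $p$-Boolean ring is a filtered colimit of such, and $\Ind(\Fin^\op)\we\Pro(\Fin)^\op$).
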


\begin{proof}
    In fact, the category of $p$-Boolean rings agrees with the category of Stone $\bF_p$-algebras,
    so this is a special case of Lemma~\ref{lem:stone}.
\end{proof}

\begin{remark}
    Example~\ref{ex:free} shows that the categories of $p$-Boolean rings are not equivalent as
    concrete categories, which is it say that the induced monads on the category of sets are not
    equivalent. On the other hand, there is a nice way of expressing the equivalence in the theorem
    using Stone duality. Recall that there is an equivalence
    $$\Spec\colon\CAlg_{\bF_2}^{\varphi=1}\we\mathrm{Stone}^\op\rcolon C(-,\bF_2),$$
    where $\mathrm{Stone}$ is the category of Stone spaces (i.e., profinite spaces) and continuous
    maps and $C(-,\bF_2)$ denotes the functor of continuous maps to $\bF_2$. The equivalence above
    is explained by an equivalence
    $$\Spec\colon\CAlg_{\bF_p}^{\varphi=1}\we\mathrm{Stone}^\op\rcolon C(-,\bF_p)$$
    for any prime $p$.\footnote{Note that the compact generation of $p$-Boolean rings by finite
    dimensional $\bF_p$-algebras implies that $\Spec R$ is profinite as for a $p$-Boolean ring
    $\Spec R$ is given by $\Hom_{\CAlg_{\bF_p}^{\varphi=1}}(-,\bF_p)$.} A specific incarnation of the equivalence
    $$\CAlg_{\bF_\ell}^{\varphi=1}\rightarrow\CAlg_{\bF_p}^{\varphi=1}$$ is thus given by $R\mapsto
    C(\Spec R,\bF_p)$ for an $\ell$-Boolean $\bF_\ell$-algebra $R$.
\end{remark}

As in Yuan and Mandell, we can unwind the Frobenius.

\begin{lemma}\label{lem:unwind}
    The composite functors
    $$\CAlg_{\bF_p}^{\varphi=1}\rightarrow\CAlg_{\bF_p}^\perf\xrightarrow{(-)\otimes_{\bF_p}\overline{\bF}_p}\CAlg_{\overline{\bF}_p}^\perf$$
    and
    $$\DAlg_{\bF_p}^{\varphi=1}\rightarrow\DAlg_{\bF_p}^\perf\xrightarrow{(-)\otimes_{\bF_p}\overline{\bF}_p}\DAlg_{\overline{\bF}_p}^\perf$$
    are fully faithful.
\end{lemma}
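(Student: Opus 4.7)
The plan is to construct a right adjoint to the composite in each case and verify directly that the unit of the adjunction is an equivalence, the entire content reducing to Artin--Schreier theory for $\overline{\bF}_p$.

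First, observe that the composite in each case is a left adjoint: both the inclusion $\DAlg_{\bF_p}^{\varphi=1}\hookrightarrow\DAlg_{\bF_p}^\perf$ (by Lemma~\ref{lem:adjoints2}) and the extension of scalars $(-)\otimes_{\bF_p}\overline{\bF}_p$ admit right adjoints, so the composite has a right adjoint $G$ given by
$$G(T)=\fib\bigl(T\xrightarrow{1-\varphi_T}T\bigr),$$
where we first forget $T\in\DAlg_{\overline{\bF}_p}^\perf$ to $\DAlg_{\bF_p}^\perf$ and then apply the right adjoint of the inclusion from Lemma~\ref{lem:adjoints2}. The resulting $\bF_p$-algebra carries a canonical trivialization of Frobenius because it is a homotopy fixed point of the $\B\bZ$-action.

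To show full faithfulness it suffices to check that the unit $S\to G(S\otimes_{\bF_p}\overline{\bF}_p)$ is an equivalence for each $S\in\DAlg_{\bF_p}^{\varphi=1}$. The Frobenius on $S\otimes_{\bF_p}\overline{\bF}_p$ is $\varphi_S\otimes\varphi_{\overline{\bF}_p}$, and the trivialization of Frobenius on $S$ identifies it with $\id_S\otimes\varphi_{\overline{\bF}_p}$. Since $\bF_p$ is a field, the functor $S\otimes_{\bF_p}(-)$ is exact and commutes with fibers, so
$$G(S\otimes_{\bF_p}\overline{\bF}_p)\we S\otimes_{\bF_p}\fib\bigl(\overline{\bF}_p\xrightarrow{1-\varphi}\overline{\bF}_p\bigr).$$
By Artin--Schreier, the map $1-\varphi\colon\overline{\bF}_p\to\overline{\bF}_p$ is a surjection of discrete abelian groups with kernel $\bF_p$, so its fiber in $\D(\bF_p)$ is the discrete module $\bF_p$. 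Thus the target is $S\otimes_{\bF_p}\bF_p\we S$, and one checks the unit is the canonical inclusion arising from $\bF_p\to\overline{\bF}_p$.

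The discrete case is handled identically, using the right adjoint $R\mapsto\ker(R\xrightarrow{1-\varphi}R)$ of Lemma~\ref{lem:adjoints2} in place of the fiber; alternatively, one restricts the derived statement via $\pi_0$. The only nonformal input is Artin--Schreier surjectivity of $1-\varphi$ on $\overline{\bF}_p$, which is where the choice of algebraic closure (rather than any perfection of $\bF_p$) is essential; the rest of the argument is driven by the adjunctions of Lemma~\ref{lem:adjoints2} and flatness of $\overline{\bF}_p$ over $\bF_p$.
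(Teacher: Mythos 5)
Your proof is correct and takes the same route the paper does: identify the right adjoint of the composite by composing the forgetful (restriction-of-scalars) functor with the cofree functor of Lemma~\ref{lem:adjoints2}, and then verify that the unit is an equivalence. The paper's proof is a single sentence asserting that the unit $R\to(R\otimes_{\bF_p}\overline{\bF}_p)^{\varphi=1}$ is an equivalence; your argument fills in exactly the content behind that assertion, namely that the trivialization $\varphi_S\we\id_S$ lets one factor $1-\varphi_{S\otimes\overline{\bF}_p}$ as $\id_S\otimes(1-\varphi_{\overline{\bF}_p})$, that tensoring with $S$ over the field $\bF_p$ preserves fibers, and that Artin--Schreier surjectivity on $\overline{\bF}_p$ collapses the fiber to $\bF_p$. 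The reduction of the discrete case to kernels (or to $\pi_0$) is also fine.
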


\begin{proof}
    The unit of the adjunction is given by
    $R\rightarrow(R\otimes_{\bF_p}\overline{\bF}_p)^{\varphi=1}$, which is an equivalence.
\end{proof}

\begin{remark}
    By taking spherical Witt vectors of $p$-Boolean $\bF_p$-algebras and tensoring up to
    $\bS_p^\un$, we find categories of $p$-complete $\bE_\infty$-algebras over $\bS_p^\un$ which do
    not depend on $p$.
\end{remark}

\subsection{The $p$-adic cochain theorems}\label{sub:padic_cochain}

Fix a prime number $p$. Let $\Sscr_{p\nil}\subseteq\Sscr$ be the full subcategory of $p$-complete
nilpotent spaces. Let $\Sscr_{p\ft}\subseteq\Sscr_{p\nil}$ be the full subcategory of $p$-complete
nilpotent spaces of finite type. Let $\Sscr_{p\fin}\subseteq\Sscr_{p\ft}$ be the full subcategory of
truncated $p$-finite type spaces, which we will call $p$-finite spaces. Let
$\Sscr_{p}^\omega\subseteq\Sscr_{p\ft}$ be the full subcategory of compact $p$-nilpotent
spaces. These are the $p$-nilpotent spaces with finite $\bF_p$-homology. Finally, let $\Sscr_{p,\geq
2}^\omega\subseteq\Sscr_p^\omega$ be the full subcategory of simply connected compact $p$-nilpotent
spaces.

Here is our version of the theorems of Mandell~\cite{mandell-padic} and Kriz~\cite{kriz}.

\begin{theorem}[The mod $p$ cochain theorem]\label{thm:derivedstone}
    The functor $\Sscr_{p\fin}^\op\rightarrow\DAlg_{\bF_p}^{\varphi=1}$ given by taking $X$ to
    $\bF_p^X$ is fully faithful and extends to an equivalence
    $\Pro(\Sscr_{p\fin})^\op\we\Ind(\Sscr_{p\fin}^\op)\we\DAlg_{\bF_p}^{\varphi=1}$.
\end{theorem}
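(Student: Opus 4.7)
The plan is to mirror the proof of Sullivan's Theorem~\ref{thm:sullivan}, replacing Sullivan's rational input by Mandell's $p$-adic cochain theorem and the equivalence between the $\varphi = 1$ and $\bFbar_p$-perfect pictures. First, for any space $X$ the derived commutative $\bF_p$-algebra $\bF_p^X$ lifts canonically to $\DAlg_{\bF_p}^{\varphi=1}$: the Frobenius on $\bF_p^X$ is pointwise the $p$-th power map on $\bF_p$, which is the identity, and this provides the required coherent trivialization of $\varphi$. For $X \in \Sscr_{p\fin}$, $\pi_0\bF_p^X \we \bF_p^{\pi_0 X}$ is a Stone $\bF_p$-algebra since $\pi_0 X$ is finite, which should let one check that the cochains factor through an appropriate Stone-like subcategory compatible with Lemma~\ref{lem:stone} and Theorem~\ref{thm:comparison}.

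For fully faithfulness on $\Sscr_{p\fin}^\op$, both $\Map_{\Sscr}(Y, X)$ and $\Map_{\DAlg_{\bF_p}^{\varphi=1}}(\bF_p^X, \bF_p^Y)$ turn colimits in $Y$ into limits, so one reduces to showing the unit $X \to \Map_{\DAlg_{\bF_p}^{\varphi=1}}(\bF_p^X, \bF_p)$ is an equivalence for $X \in \Sscr_{p\fin}$. Using the equivalence $\DAlg_{\bF_p}^{\varphi=1} \we \DAlg_{\bFbar_p}^\perf$ asserted in the introduction (the composite of the forgetful and extension-of-scalars functors of Lemma~\ref{lem:unwind}), this becomes $X \we \Map_{\DAlg_{\bFbar_p}^\perf}(\bFbar_p^X, \bFbar_p)$. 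The forgetful functor to $\CAlg_{\bFbar_p}^{\bE_\infty}$ lands in the mapping space $\Map_{\CAlg_{\bFbar_p}^{\bE_\infty}}(\bFbar_p^X, \bFbar_p) \we X$ given by Mandell's theorem~\cite{mandell-padic}, and the derived-commutative perfect mapping space automatically contains all geometric maps (which come with their canonical derived and perfect structure), so the containment is an equivalence. Alternatively, one may argue directly as in Theorem~\ref{thm:sullivan} by nilpotent Postnikov induction and K\"unneth, reducing via Eilenberg--Moore (which converges under nilpotence by~\cite[Cor.~1.1.10]{dag13}) to the case $X = K(\bF_p, n)$, with base case $n = 0$ handled by Example~\ref{ex:free} and inductive step using the fibration $K(\bF_p, n) \to \ast \to K(\bF_p, n+1)$.

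For the extension to $\Pro(\Sscr_{p\fin})^\op$ and essential surjectivity, one must show that $\bF_p^X$ for $X \in \Sscr_{p\fin}$ is compact in $\DAlg_{\bF_p}^{\varphi=1}$ and that such cochains generate all compact objects. Compactness should be established by Postnikov induction, the base case being Example~\ref{ex:free} and the inductive step passing through a bar construction. Essential surjectivity then follows from monadicity over $\D(\bF_p)$ (Corollary~\ref{cor:monadic2}): every object is built from the free $p$-Boolean derived $\bF_p$-algebras on shifted copies of $\bF_p$, which I expect to identify with $\bF_p^{K(\bF_p, n)}$. The hardest step is precisely this identification of the free $p$-Boolean derived $\bF_p$-algebra on $\bF_p[n]$ for $n \geq 1$ together with its compactness. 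This is exactly where the $\varphi = 1$ datum injects the rigidity needed to recover $X$ itself rather than the free loop space $\Map(S^1, X_p^\wedge)$ (as happens if one merely works in $\DAlg_{\bF_p}^\perf$; see the remark following Theorem~\ref{thm:intromain} in the introduction), and carrying it out carefully is the main technical challenge.
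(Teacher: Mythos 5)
Your overall strategy matches the paper's: lift cochains to $\DAlg_{\bF_p}^{\varphi=1}$, reduce full faithfulness to the case $Y = \ast$, run a nilpotent Postnikov induction with Eilenberg--Moore to land on Eilenberg--Mac~Lane spaces, and derive essential surjectivity from compact generation by the objects $\bF_p^{K(\bF_p,n)}$ and finite products of $\bF_p$. However, two issues prevent this from being a proof.

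The first is that your ``Route 1'' — using $\DAlg_{\bF_p}^{\varphi=1}\we\DAlg_{\bFbar_p}^\perf$ and then comparing with Mandell's $\bE_\infty$-cochain theorem — is logically broken. You observe that the forgetful map $\Map_{\DAlg_{\bFbar_p}^\perf}(\bFbar_p^X,\bFbar_p)\to\Map_{\CAlg_{\bFbar_p}^{\bE_\infty}}(\bFbar_p^X,\bFbar_p)\we X$ together with the natural map $X\to\Map_{\DAlg_{\bFbar_p}^\perf}(\bFbar_p^X,\bFbar_p)$ gives a retraction. But a retraction only shows that $X$ is a split summand of the perfect-derived mapping space; the sentence ``the containment is an equivalence'' is an assertion, not an argument. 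Nothing in the structure forces the extra structure maps (forgetting the perfect derived commutative refinement and the $\bFbar_p$-algebra structure) to be conservative or full here, and indeed Mandell's theorem is not taken as a black box in the paper's proof.

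The second issue is that your ``Route 2,'' which is the route the paper actually takes, is never carried out where it matters. You explicitly flag ``the identification of the free $p$-Boolean derived $\bF_p$-algebra on $\bF_p[-n]$ for $n\geq 1$ together with its compactness'' as ``the main technical challenge,'' but then do not supply an argument for it. In the paper this is Proposition~\ref{prop:free_derived_boolean}, and it is where nearly all of the work lives: it uses To\"en's theory of affine stacks, the comparison $\c\CAlg_{\bF_p}^\op[W^{-1}]\we\DAlg_{\bF_p}^{\ccn}$ of Mathew--Mondal, the Artin--Schreier fiber sequence $\underline{K(\bF_p,n)}\to K(\Oscr,n)\xrightarrow{\id-\varphi}K(\Oscr,n)$, an Eilenberg--Moore argument to get $\bF_p^{K(\bF_p,n)}\we (F_n)_{\varphi=1}$ where $F_n = \LSym_{\bF_p}(\bF_p[-n])$, and the observation that $(R_\perf)_{\varphi=1}\we R_{\varphi=1}$. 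Without this, the inductive step in your Postnikov argument does not close: you need the universal property of $\bF_p^{K(\bF_p,n)}$ to evaluate $\Map_{\DAlg_{\bF_p}^{\varphi=1}}(\bF_p^{K(\bF_p,n)},\bF_p)\we K(\bF_p,n)$, and you need compactness of these objects to conclude that the extension to $\Pro(\Sscr_{p\fin})^\op$ is fully faithful and essentially surjective. So the proposal correctly identifies the skeleton of the argument but leaves its load-bearing lemma as a conjecture. (Also a small sign slip: the free object is on $\bF_p[-n]$, not $\bF_p[n]$, since cochains are coconnective.)
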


We need a preliminary proposition.

\begin{proposition}\label{prop:free_derived_boolean}
    For $n\geq 1$, the $p$-Boolean derived ring $\bF_p^{K(\bF_p,n)}$ is equivalent to the free
    $p$-Boolean derived ring on $\bF_p[-n]$; in particular, it is a compact object of
    $\DAlg_{\bF_p}^{\varphi=1}$.
\end{proposition}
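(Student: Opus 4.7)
The plan is to construct a canonical comparison map $c_n$ using the fundamental class, to reduce the claim inductively via matched delooping recursions, and to close via an augmentation-ideal/cotangent argument. By Corollary~\ref{cor:monadic2}, the forgetful functor $U\colon\DAlg_{\bF_p}^{\varphi=1}\to\D(\bF_p)$ is monadic and so admits a left adjoint $F$. The fundamental class $\iota_n\in H^n(K(\bF_p,n);\bF_p)\simeq\pi_{-n}U(\bF_p^{K(\bF_p,n)})$ defines a morphism $\bF_p[-n]\to U(\bF_p^{K(\bF_p,n)})$ in $\D(\bF_p)$, whose transpose under the free--forgetful adjunction is the comparison map $c_n\colon F(\bF_p[-n])\to\bF_p^{K(\bF_p,n)}$ in $\DAlg_{\bF_p}^{\varphi=1}$.

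The core of the argument is that both sides satisfy compatible delooping identifications. Since $F$ preserves colimits and $\bF_p[-n]$ is the pushout of $0\leftarrow\bF_p[-n-1]\to 0$ in $\D(\bF_p)$, applying $F$ yields an equivalence $F(\bF_p[-n])\simeq\bF_p\otimes^L_{F(\bF_p[-n-1])}\bF_p$ as a pushout in $\DAlg_{\bF_p}^{\varphi=1}$ with respect to the canonical augmentations. Dually, applying the Eilenberg--Moore theorem to the path-loop fibration $K(\bF_p,n)\to\ast\to K(\bF_p,n+1)$---valid because $K(\bF_p,n+1)$ is simply connected for $n\geq 0$---gives $\bF_p^{K(\bF_p,n)}\simeq\bF_p\otimes^L_{\bF_p^{K(\bF_p,n+1)}}\bF_p$. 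Under these identifications $c_n$ arises from $c_{n+1}$ by applying $\bF_p\otimes^L_{(-)}\bF_p$; in particular, $c_{n+1}$ being an equivalence forces $c_n$ to be one.

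To close the verification of $c_n$, one analyzes the augmentation ideal filtration. Both $UF(\bF_p[-n])$ and $U(\bF_p^{K(\bF_p,n)})$ are augmented over $\bF_p$, have $\pi_0=\bF_p$, vanish in $\pi_{-k}$ for $0<k<n$, and $c_n$ matches generators on $\pi_{-n}$ by construction. The filtration is complete on both sides---on the free side by the natural weight grading from the monad $UF$, on the cochain side by convergence of the Eilenberg--Moore spectral sequence under simple connectivity---and the cotangent complex reduces on both sides to $\bF_p[-n]$ in a way matched by $c_n$, yielding the equivalence. Compactness then follows because $\bF_p[-n]\in\D(\bF_p)^\omega$ and $F$ preserves compact objects: $U$ factors as $\DAlg_{\bF_p}^{\varphi=1}\to\DAlg_{\bF_p}^\perf\to\DAlg_{\bF_p}\to\D(\bF_p)$, the first two forgetful functors preserving all colimits by Lemmas~\ref{lem:adjoints1} and~\ref{lem:adjoints2}, and the last preserving sifted colimits. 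The main obstacle will be the completeness/cotangent analysis in this final paragraph; a rigorous execution likely requires Holeman's non-linear right left extension framework or a cell-by-cell translation of Kriz's cosimplicial approach.
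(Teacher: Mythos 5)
Your overall strategy is reasonable in outline but has two gaps, one of which you acknowledge but understate, and is a genuinely different route from the paper's.

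First, the delooping induction goes the wrong way. You correctly note that Eilenberg--Moore and colimit-preservation of $F$ yield $c_n = \bF_p\otimes^L_{c_{n+1}}\bF_p$, which shows that $c_{n+1}$ an equivalence $\Rightarrow$ $c_n$ an equivalence. But there is no base case at the top: there is no $n_0$ for which $c_{n_0}$ is an equivalence for trivial reasons. (Contrast with the rational case, where one can start at $K(\bQ,1)$ and induct upward since the Serre spectral sequence for $K(\bQ,n)\to\ast\to K(\bQ,n+1)$ degenerates.) The $p$-adic Eilenberg--Mac Lane spaces have complicated $\bF_p$-cohomology governed by Steenrod operations, so there is no analogous upward induction available, and the downward one never gets off the ground. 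As written this paragraph is scaffolding that does not advance the proof.

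Second, the ``augmentation ideal/cotangent'' paragraph is the real proof, and it is a placeholder rather than an argument. The key claim---that the cotangent complex of $\bF_p^{K(\bF_p,n)}$ relative to $\bF_p$, \emph{computed in the monadic $\infty$-category $\DAlg_{\bF_p}^{\varphi=1}$ at the augmentation}, is $\bF_p[-n]$---is essentially equivalent to the proposition itself and is not established. Note that the more familiar $\bE_\infty$-cotangent complex of any perfect (derived) $\bF_p$-algebra vanishes (cf.\ the paper's Corollary~\ref{cor:cotangent_vanishing}), so one must be careful about which cotangent formalism is in play; the monadic one for $\DAlg_{\bF_p}^{\varphi=1}$ needs to be set up and computed. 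The completeness assertions (weight filtration on the free side, Eilenberg--Moore convergence on the cochain side) likewise need to be made precise before a ``match cotangent complexes and conclude'' argument can be invoked. You flag this yourself, but it is not a secondary obstacle---it is the whole content.

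The paper takes a completely different route that sidesteps both issues. It invokes To\"en's theory of affine stacks, uses the unipotence criterion to identify $\underline{K(\bF_p,n)}$ as an affine stack with $\Spec\bF_p^{K(\bF_p,n)}$, and then exploits the Artin--Schreier fiber sequence of sheaves $\underline{K(\bF_p,n)}\rightarrow K(\Oscr,n)\xrightarrow{\id-\varphi}K(\Oscr,n)$. Eilenberg--Moore (in the affine-stack/Mondal--Reinecke form) converts this into the derived pushout $\bF_p^{K(\bF_p,n)}\simeq F_n\otimes_{F_n}\bF_p$ where $F_n=\LSym(\bF_p[-n])$ and the self-map of $F_n$ sends $x_{-n}\mapsto x_{-n}-\varphi(x_{-n})$. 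One then observes the purely formal identity $(R_\perf)_{\varphi=1}\simeq R_{\varphi=1}$, which directly exhibits $\bF_p^{K(\bF_p,n)}$ as the free $p$-Boolean derived ring on $\bF_p[-n]$. This computes the universal property in one stroke with no cotangent analysis and no induction. If you want to salvage your approach, the Artin--Schreier sequence is the missing ingredient: it is precisely what lets one compute $\bF_p^{K(\bF_p,n)}$ as a base change of a free object, rather than trying to recognize it post hoc via cotangent complexes.
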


\begin{proof}
    To\"en introduced the notion of affine stacks in~\cite{toen-affines}. A higher stack
    $X\colon\CAlg_{\bF_p}\rightarrow\Sscr$ is an affine stack if it is represented by a cosimplicial
    commutative ring $S^\bullet$, so that $X(R)\we|\Hom_{\CAlg_{\bF_p}}(S^\bullet,R)|$ for
    $R\in\CAlg_{\bF_p}$. To\"en proved in~\cite{toen-affines} that induced functor $\c\CAlg_{\bF_p}^\op\rightarrow\mathrm{Stk}_{\bF_p}$
    factors through the localization of cosimplicial commutative $\bF_p$-algebras at the weak
    homotopy equivalences and induces a fully faithful functor
    $\c\CAlg_{\bF_p}^\op[W^{-1}]\rightarrow\mathrm{Stk}_{\bF_p}$. (This holds more generally over
    an arbitrary commutative ring.)

    Forthcoming work~\cite{mathew-mondal} of Mathew and Mondal proves that
    $\c\CAlg_{\bF_p}^\op[W^{-1}]\we\DAlg_{\bF_p}^{\mathrm{ccn}}$, the full subcategory of
    coconnective derived commutative $\bF_p$-algebras. (Brantner, Campos, and Nuiten also prove
    in~\cite{brantner-campos-nuiten} that $\DAlg_{\bF_p}$ is equivalent to a model category
    structure on simplicial cosimplicial $\bF_p$-algebras.) The affine stacks are of the form $\Spec
    B=\Map_{\DAlg_{\bF_p}}(B,-)$ where $B\in\DAlg_{\bF_p}^{\mathrm{ccn}}$.

    To\"en also identifies the essential image of the affine stack functor. It contains precisely
    those higher stacks all of whose homotopy sheaves $\pi_iX$ are unipotent affine group schemes
    over $\bF_p$ for $i>0$. A particular example is the constant sheaf of spaces associated to
    the Eilenberg--Mac Lane space $K(\bF_p,n)$.

    Now, we compute the affine stack associated to $K(\bF_p,n)$ in two different ways. First, we
    have left adjoint functors
    $$\Sscr\rightarrow\Stk_{\bF_p}\xrightarrow{\R\Gamma(-,\Oscr)}\DAlg_{\bF_p}^{\ccn,\op},$$
    where the first functor takes a space $X$ to the sheaf of spaces $\underline{X}$ associated to
    the constant presheaf on $X$. The right adjoint to $\R\Gamma(-,\Oscr)$ is $\Spec$. Taking
    $\R\Gamma(-,\Oscr)$ is a localization by To\"en's theorem with unit map
    $Y\rightarrow\Spec\R\Gamma(Y,\Oscr)$. In particular, if $X$ is a space such that $\underline{X}$
    is an affine stack, then the unit map
    $\underline{X}\rightarrow\Spec\R\Gamma(\underline{X},\Oscr)$ is an equivalence. Now, suppose
    that $X\we|X_\bullet|$ where $X_\bullet$ is a simplicial space. Then, since the maps above
    preserve colimits, we have
    $\R\Gamma(\underline{X},\Oscr)\we\Tot\R\Gamma(\underline{X_\bullet},\Oscr)$. If each $X_n$ is
    finite for $n\geq 0$, then for each $n$ we have
    $\R\Gamma(\underline{X_n},\Oscr)\we\R\Gamma(*,\Oscr)^{X_n}\we\bF_p^{X_n}$, so we can rewrite the $\Tot$ as $\Tot\bF_p^{X_\bullet}$, which computes
    the function spectrum $\bF_p^X$.

    In particular, we know that $\underline{K(\bF_p,n)}$ is an affine stack by the unipotence
    criterion and it follows from the above
    argument that it is equivalent to $\Spec\bF_p^{K(\bF_p,n)}$. On the other hand, we have a fiber
    sequence of sheaves $$\underline{K(\bF_p,n)}\rightarrow
    K(\Oscr,n)\xrightarrow{\id-\varphi}K(\Oscr,n)$$
    for each $n\geq 0$ arising from the Artin--Schreier sequence. Let $F_n=\R\Gamma(K(\Oscr,n),\Oscr)$, which is the free derived commutative ring on
    a degree $-n$ class $x_{-n}$. By an Eilenberg--Moore argument (see~\cite[Cor.~2.3.5]{mondal-reinecke}), it
    follows that for $n\geq 1$ the natural map
    $$F_n\otimes_{F_n}\bF_p\rightarrow\R\Gamma(\underline{K(\bF_p,n)},\Oscr)$$
    is an equivalence, where the map $F_n\rightarrow F_n$ takes $x_{-n}$ to $x_{-n}-\varphi(x_{-n})$.
    By the universal property of $F_n$, we see that $$\bF_p^{K(\bF_p,n)}\we (F_n)_{\varphi=1}.$$
    Now, for any derived commutative ring $R$, there is a natural equivalence
    $$(R_\perf)_{\varphi=1}\we(R_{\varphi=1})_\perf\we R_{\varphi=1},$$
    which is to say that if we force $\varphi$ to be the identity, then the result is certainly
    perfect. It follows that $\bF_p^{K(\bF_p,n)}$ has the desired universal property as a free
    object. Compactness follows from the fact that
    $\DAlg_{\bF_p}\leftarrow\DAlg_{\bF_p}^{\varphi=1}$ preserves all colimits and in particular
    filtered colimits.
\end{proof}

\begin{proof}[Proof of Theorem~\ref{thm:derivedstone}]
    As the $\infty$-category of $p$-finite spaces has all finite limits and is idempotent complete,
    $\Sscr_{p\fin}^\op$ has all finite colimits and is idempotent complete;
    $\Ind(\Sscr_{p\fin}^\op)$ is presentable. The objects $\bF_p^{K(\bF_p,n)}$ together with
    the $p$-Boolean rings $\bF_p^S$ for $S$ finite give a set of compact generators of
    $\DAlg_{\bF_p}^{\varphi=1}$ using Proposition~\ref{prop:free_derived_boolean}, which also
    implies that these
    objects are in the image of the functor in question. To prove the theorem, it is enough to
    see fully faithfulness on $\Sscr_{p\fin}^\op$. Thus, fix $X\in\Sscr_{p\fin}$, $Y\in\Sscr$, and consider
    the natural map $\Map_{\Sscr}(Y,X)\rightarrow\Map_{\DAlg_{\bF_p}^\varphi=1}(\bF_p^X,\bF_p^Y)$.
    Both sides take colimits in $Y$ to limits, so we can reduce to the case where $Y$ is a point.
    Assume that $X$ is connected. Since it is also $p$-finite, and hence nilpotent, it is built out
    of finitely many iterated pullbacks the form
    $$\xymatrix{
        A\ar[r]\ar[d]&\ast\ar[d]\\
        B\ar[r]&K(\bF_p,n+1)
    }$$
    for $n\geq 1$. As $\bF_p^{(-)}$ takes such pullback squares to pushout squares, we reduce (when
    $X$ is connected) to Eilenberg--Mac Lane spaces, which follows from
    Proposition~\ref{prop:free_derived_boolean}. Indeed, by the universal property of
    $\bF_p^{K(\bF_p,n)}$ for $n\geq 1$ established in the proposition,
    $$\Map_{\DAlg_{\bF_p}^{\varphi=1}}(\bF_p^{K(\bF_p,n)},\bF_p)\we\Map_{\D(k)}(\bF_p[-n],\bF_p)\we
    K(\bF_p,n).$$
    It remains to handle the reduction to the connected case. For this, note that if $X$ and $Y$ are
    connected, then any map $\bF_p^{X\coprod Y}\we\bF_p^X\times\bF_p^Y\rightarrow\bF_p$ sends exactly one of the
    $X$ or $Y$ components to zero as it induces a map of rings
    $\pi_0(\bF_p^X\times\bF_p^Y)\iso\bF_p\times\bF_p\rightarrow\bF_p$. It follows
    that $\Map_{\DAlg_{\bF_p}^{\varphi=1}}(\bF_p^{X\coprod Y},\bF_p)\we X\coprod Y$ if $X$ and
    $Y$ are connected and $p$-finite. This completes the proof (by induction on the finite number of
    connected components).
\end{proof}

\begin{example}
    If $X$ is a scheme, then $\R\Gamma_\et(X,\bF_p)$ is naturally a $p$-Boolean derived ring and
    hence corresponds to a pro-$p$-finite space under the equivalence in
    Theorem~\ref{thm:derivedstone}, the $p$-adic \'etale homotopy type of $X$ in the sense of
    Artin--Mazur~\cite{artin-mazur} (see also~\cite[Sec.~4.4]{mondal-reinecke}).
\end{example}

\begin{corollary}
    The functor $X\mapsto\bFbar_p^X$ of continuous $\bFbar_p$-cochains
    $$\Pro(\Sscr_{p\fin})^\op\rightarrow\DAlg_{\bFbar_p}^\perf$$ is fully faithful.
\end{corollary}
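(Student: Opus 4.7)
The plan is to deduce this corollary by composing the equivalence of Theorem~\ref{thm:derivedstone} with the fully faithful extension-of-scalars functor of Lemma~\ref{lem:unwind}, then verifying that the resulting composition agrees with continuous $\bFbar_p$-cochains.

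First I would recall that Theorem~\ref{thm:derivedstone} furnishes an equivalence
$$\Pro(\Sscr_{p\fin})^\op \we \DAlg_{\bF_p}^{\varphi=1}$$
implemented on $\Sscr_{p\fin}^\op$ by $X\mapsto\bF_p^X$ and extended to pro-objects by filtered colimits on the target. Next, Lemma~\ref{lem:unwind} gives a fully faithful functor
$$\DAlg_{\bF_p}^{\varphi=1}\rightarrow\DAlg_{\bF_p}^\perf\xrightarrow{(-)\otimes_{\bF_p}\bFbar_p}\DAlg_{\bFbar_p}^\perf,$$
with fully faithfulness witnessed by the equivalence $R\we (R\otimes_{\bF_p}\bFbar_p)^{\varphi=1}$. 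Composing, we obtain a fully faithful functor $\Pro(\Sscr_{p\fin})^\op\rightarrow\DAlg_{\bFbar_p}^\perf$.

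The remaining task is to identify this composite with $X\mapsto\bFbar_p^X$. For $X\in\Sscr_{p\fin}$, finiteness (in the sense of being a finite limit of points, after choosing a suitable simplicial model with finitely many nondegenerate simplices in each degree once one localizes to the $p$-finite setting) combined with the flatness of $\bFbar_p$ over $\bF_p$ gives a natural equivalence $\bF_p^X\otimes_{\bF_p}\bFbar_p\we\bFbar_p^X$, since extension of scalars commutes with the finite totalization computing $\bF_p^X$. For a general pro-$p$-finite space $X\we\{\lim_i X_i\}$, the continuous cochains are defined as $\colim_i\bFbar_p^{X_i}$; since tensor product commutes with filtered colimits, this equals $(\colim_i\bF_p^{X_i})\otimes_{\bF_p}\bFbar_p\we\bF_p^X\otimes_{\bF_p}\bFbar_p$, matching the composite functor.

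The only genuinely nontrivial input is Theorem~\ref{thm:derivedstone}; everything else is formal, with the mild bookkeeping point being the identification of $(-)\otimes_{\bF_p}\bFbar_p$ applied to the pro-object interpretation of $\bF_p^X$ with continuous $\bFbar_p$-cochains. I do not expect any substantive obstacle beyond keeping track of this identification.
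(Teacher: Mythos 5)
Your proof is correct and follows essentially the same route as the paper, which cites Theorem~\ref{thm:derivedstone} and Lemma~\ref{lem:unwind} and leaves the identification of the composite with $\bFbar_p$-cochains implicit. Your additional paragraph spelling out that $(-)\otimes_{\bF_p}\bFbar_p$ commutes with the finite totalization computing $\bF_p^X$ and with the filtered colimit over the pro-system is exactly the bookkeeping the paper is taking for granted.
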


\begin{proof}
    This follows from Theorem~\ref{thm:derivedstone} and Lemma~\ref{lem:unwind}.
\end{proof}

To go further, lifting the previous theorem from $\bF_p$ to $\bZ_p$, we use derived
$\delta$-rings. These were introduced by Holeman~\cite{holeman-derived}, who shows that one can derive the
free $\delta$-ring monad. Let $\DAlg_{\bZ_p}^{\delta,\wedge}$ be the $\infty$-category of $p$-complete
derived $\delta$-rings. As in the case of derived commutative rings there is a full subcategory
$\DAlg_{\bZ_p}^{\delta,\perf,\wedge}$ of $p$-complete derived $\delta$-rings for which the Frobenius
is an equivalence, and we set
$\DAlg_{\bZ_p}^{\delta,\varphi=1,\wedge}=(\DAlg_{\bZ_p}^{\delta,\perf,\wedge})^{\h S^1}$.

The forgetful functor $\DAlg_{\bZ_p}^{\delta,\wedge}\rightarrow\DAlg_{\bZ_p}^\wedge$ admits a right
adjoint, denoted by $\bW$, which is a derived version of the $p$-typical Witt vectors functor.
In Corollary~\ref{cor:compare} below we show that $\bW$
agrees (after forgetting the $\delta$-ring structure) with the functor defined via right left Kan
extension in Theorem~\ref{thm:nonconnective}.

\begin{lemma}\label{lem:discrete}
    If $R$ is a derived $p$-complete commutative ring, then $\bW(R)$ is discrete and hence equivalent
    to the usual ring of $p$-typical Witt vectors of $R$.
\end{lemma}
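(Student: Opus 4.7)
The plan is to identify $\bW(R)$ with the classical ring of $p$-typical Witt vectors $W(R)$. First I would use the universal property of $\bW$, together with the classical zeroth ghost map $w_0 \colon W(R) \to R$, to produce a canonical comparison map $\alpha_R \colon W(R) \to \bW(R)$ in $\DAlg_{\bZ_p}^{\delta, \wedge}$: since $W(R)$ is a classical discrete $\delta$-ring and so lies in the heart of $\DAlg_{\bZ_p}^{\delta, \wedge}$, and $w_0$ is a map of $p$-complete derived commutative rings, the adjunction between the forgetful functor $U$ and $\bW$ supplies $\alpha_R$.

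To verify that $\alpha_R$ is an equivalence, I would first show that it realizes an isomorphism on $\pi_0$. For any classical discrete $\delta$-ring $A$, the chain
$$\Map_{\DAlg_{\bZ_p}^{\delta, \wedge}}(A, \bW(R)) \simeq \Map_{\DAlg_{\bZ_p}^\wedge}(A, R) \simeq \Map_{\CAlg_{\bZ_p}^\heart}(A, R) \simeq \Map_{\delta\text{-rings}^\heart}(A, W(R))$$
of natural equivalences — obtained from the $U \dashv \bW$ adjunction, the discreteness of $A$ and $R$, and the classical universal property of $W(R)$ — identifies $\pi_0 \bW(R)$ canonically with $W(R)$ compatibly with $\alpha_R$.

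The main obstacle is ruling out homotopy of $\bW(R)$ outside degree $0$. Since $\bW$ is a right adjoint, it preserves coconnective objects for the natural $t$-structure on $\DAlg_{\bZ_p}^{\delta, \wedge}$, so $\bW(R)$ is automatically coconnective; what remains is the vanishing of $\pi_i \bW(R)$ for $i < 0$. I would reduce this to a mod-$p$ statement using $p$-completeness: it suffices to show $\bW(R) \otimes_{\bZ_p} \bF_p$ is discrete. Via the compatibility of $\bW$ with the nonconnective Witt vectors of $(R/p)_\perf$, as to be established in Corollary~\ref{cor:compare} and using Theorem~\ref{thm:nonconnective}, one can compute this mod-$p$ reduction and match it with the classical $W(R)/p$, which is discrete by construction. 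Conservativity of mod-$p$ reduction on $p$-complete objects then promotes $\alpha_R$ from an equivalence modulo $p$ to an equivalence, simultaneously establishing discreteness of $\bW(R)$ and its identification with the classical Witt vectors.
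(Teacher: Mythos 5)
Your framework is sensible up to a point: reduce discreteness of $\bW(R)$ to discreteness of $\bW(R)\otimes_{\bZ_p}\bF_p$ using derived $p$-completeness (a valid reduction, though it deserves a proof: one shows $M/p$ discrete forces each $M/p^n$ discrete with surjective transition maps, so $\lim_n M/p^n\simeq M$ is discrete), and the identification of $\pi_0\bW(R)$ with $W(R)$ via the adjunction is correct. However, the crucial step — showing that $\bW(R)\otimes_{\bZ_p}\bF_p$ is discrete — is where the argument collapses, and the proposal does not supply a working mechanism for it.

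The results you invoke do not apply. Theorem~\ref{thm:nonconnective} and Corollary~\ref{cor:compare} concern a Witt vector functor with source $\DAlg_{\bF_p}^\perf$, the $\infty$-category of perfect derived commutative $\bF_p$-algebras; the $\bW$ of the present lemma is the right adjoint of the forgetful functor $\DAlg_{\bZ_p}^{\delta,\wedge}\to\DAlg_{\bZ_p}^\wedge$, applied to an arbitrary discrete derived $p$-complete commutative ring $R$, which is typically very far from being a perfect $\bF_p$-algebra. There is no statement in the paper, nor an obvious argument, that identifies $\bW(R)\otimes_{\bZ_p}\bF_p$ with anything computed from $(R/p)_\perf$: the mod-$p$ reduction is a left adjoint and $\bW$ is a right adjoint, and they do not commute. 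Moreover, even setting that aside, the appeal is circular: Corollary~\ref{cor:compare} is proved using Proposition~\ref{prop:fully_faithful}(a)\nobreakdash--(c), and the proof of Proposition~\ref{prop:fully_faithful}(a) cites Lemma~\ref{lem:discrete} explicitly (to conclude $\bW(k)$ is discrete for discrete perfect $k$). Finally, the side remark that ``$W(R)/p$ is discrete by construction'' is not correct in general: the derived mod-$p$ reduction of the classical ring $W(R)$ is discrete only when $W(R)$ is $p$-torsion free, which fails for many non-perfect $R$ (one must allow degree-one homology from $\Tor_1^{\bZ_p}(W(R),\bF_p)$).

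The paper's proof takes a quite different and more elementary route. By adjunction one needs to show that $\Map_{\DAlg_{\bZ_p}^\wedge}(\LSym^\delta_{\bZ_p}(\bZ_p[-n])_p^\wedge,R)$ has vanishing $\pi_i$ for $i<n$ and all $n\geq1$. One writes $\LSym^\delta_{\bZ_p}(\bZ_p[-n])_p^\wedge$ as a totalization of a cosimplicial free $\delta$-ring via Dold--Kan, chooses a cofibrant resolution which is $\bZ_p$ in cosimplicial degrees $<n$, and then directly observes that the resulting geometric realization of mapping spaces is $n$-connective. No passage to mod-$p$ reductions or perfections is needed. If you want to pursue the mod-$p$ route, you would need an independent computation of $\bW(R)\otimes_{\bZ_p}\bF_p$ that does not pass through the perfect machinery; as it stands there is a genuine gap.
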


\begin{proof}
    It is enough to show that 
    $$\pi_i\Map_{\DAlg_{\bZ_p}^{\delta,\wedge}}(\LSym_{\bZ_p}^{\delta}(\bZ_p[-n])_p^\wedge,\bW(R))=0$$ 
    for all $n\geq 1$ and all $i<n$ or, by adjunction, that
    $$\pi_i\Map_{\DAlg_{\bZ_p}^{\wedge}}(\LSym_{\bZ_p}^{\delta}(\bZ_p[-n])_p^\wedge,R)=0$$ 
    for all $n\geq 1$ and all $i<n$. Let $F_n=\LSym_{\bZ_p}^\delta(\bZ_p[-n])_p^\wedge$.
    Let $X^\bullet$ be the result of the Dold--Kan correspondence applied to $\bZ_p[-n]$, so it is a
    cosimplicial $\bZ_p$-module which is finitely presented and free in each degree. Moreover, $X^i=0$
    for $0\leq i<n$. By construction, $F_n$ is equivalent to the totalization of the cosimplicial
    commutative ring $F_n^\bullet=\Tot\Sym^\delta(X^\bullet)_p^\wedge$. For $0\leq i<n$,
    $F_n^i=\bZ_p$. To compute $\Map_{\DAlg_{\bZ_p}^\wedge}(\LSym^\delta_{\bZ_p}(\bZ_p[-n])_p^\wedge,R)$, we can take a 
    cofibrant resolution $QF_n^\bullet$ of $F_n^\bullet$ and compute the geometric realization
    $|\Map_{\CAlg_{\bZ_p}^\wedge}(QF_n^\bullet,R)|$ by~\cite{mathew-mondal} and~\cite{toen-affines}.
    Such a cofibrant resolution $QF_n^\bullet$ can also be chosen to satisfy $QF_n^i=\bZ_p$ for
    $0\leq i<n$ using that for the class $S(n)\rightarrow D(n)$, $n\geq 0$, of
    generating cofibrations studied in~\cite[Thm.~2.1.2]{toen-affines} one has no non-trivial maps from $D(i)$
    to $F_n^\bullet$ for $0\leq i\leq n$. It follows that the geometric realization is $n$-connective, as desired.
\end{proof}

\begin{proposition}\label{prop:fully_faithful}
    \begin{enumerate}
        \item[{\em (a)}] There are equivalences
            $\DAlg_{k}^{\perf}\we\DAlg_{\bW(k)}^{\delta,\perf,\wedge}$
            for any perfect commutative $\bF_p$-algebra $k$
            and
            $\DAlg_{\bF_p}^{\varphi=1}\we\DAlg_{\bZ_p}^{\delta,\varphi=1,\wedge}$.
        \item[{\em (b)}] The functor
            $\DAlg_{\bZ_p}^{\delta,\varphi=1,\wedge}\rightarrow\DAlg_{\bZ_p^\un}^{\delta,\perf,\wedge}$
            is fully faithful.
        \item[{\em (c)}] The forgetful functor
            $\DAlg_{\bZ_p}^{\delta,\perf,\wedge}\rightarrow\DAlg_{\bZ_p}^{\wedge}$ is fully faithful
            with essential image those $p$-complete derived commutative $\bZ_p$-algebras $R$ such
            that $R/p$ is perfect.
    \end{enumerate}
\end{proposition}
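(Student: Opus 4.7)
The plan is to establish (c) first, then derive (a) and (b) formally. The key tools are Holeman's Witt vectors functor $\bW\colon\DAlg_{\bZ_p}^\wedge\to\DAlg_{\bZ_p}^{\delta,\wedge}$ (right adjoint to the forgetful) and the identity $\bW(k)/p\we k$ for $k\in\DAlg_{\bF_p}^\perf$ coming from Theorem~\ref{thm:nonconnective}(i) and Corollary~\ref{cor:compare}.

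For (c), I would first verify that $\bW(k)$ is perfect as a derived $\delta$-ring for $k\in\DAlg_{\bF_p}^\perf$: its Frobenius reduces mod $p$ to the Frobenius on $k$, which is an equivalence, so it is itself an equivalence by derived $p$-completeness. For the essential image statement, if $R$ is the underlying commutative ring of a perfect derived $\delta$-ring $S$, then $R/p\we S/p$ inherits the Frobenius equivalence and is perfect. Conversely, given $R\in\DAlg_{\bZ_p}^\wedge$ with $R/p$ perfect, one constructs a canonical equivalence $R\we\bW(R/p)$; both sides are $p$-complete with equivalent mod $p$ reductions, so it suffices to produce the natural comparison map, which comes from Holeman's framework applied to the essentially unique Frobenius lift on $R$ (paralleling the classical uniqueness for $p$-complete rings with perfect reduction). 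Full faithfulness of the forgetful then follows from the adjunction $U\dashv\bW$ via the chain
$$\Map_{\DAlg_{\bZ_p}^{\delta,\perf,\wedge}}(S,T)\we\Map_{\DAlg_{\bZ_p}^{\delta,\wedge}}(S,T)\we\Map_{\DAlg_{\bZ_p}^{\delta,\wedge}}(S,\bW(UT))\we\Map_{\DAlg_{\bZ_p}^\wedge}(US,UT),$$
where the last step is the adjunction, the middle uses $T\we\bW(UT)$ from the essential image, and the first is full faithfulness of the perfect subcategory inside all derived $\delta$-rings.

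For (a), the same argument as (c) applies relatively over $\bW(k)$ to give $\DAlg_k^\perf\we\DAlg_{\bW(k)}^{\delta,\perf,\wedge}$: the mod $p$ reduction sends a $\bW(k)$-algebra to a $k$-algebra, and $\bW$ sends a $k$-algebra $R$ to $\bW(R)$, which is naturally a $\bW(k)$-algebra by functoriality of $\bW$ applied to $k\to R$. The second equivalence in (a) is obtained by taking $\h S^1$-fixed points of the first equivalence with $k=\bF_p$, using the $S^1$-equivariance of $\bW$ with respect to the Frobenius actions and the definitions $\DAlg_{\bF_p}^{\varphi=1}=(\DAlg_{\bF_p}^\perf)^{\h S^1}$ and $\DAlg_{\bZ_p}^{\delta,\varphi=1,\wedge}=(\DAlg_{\bZ_p}^{\delta,\perf,\wedge})^{\h S^1}$.

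For (b), apply (a) with $k=\bFbar_p$ to obtain $\DAlg_{\bFbar_p}^\perf\we\DAlg_{\bZ_p^\un}^{\delta,\perf,\wedge}$, using $\bW(\bFbar_p)=\bZ_p^\un$. Combined with the equivalence $\DAlg_{\bF_p}^{\varphi=1}\we\DAlg_{\bZ_p}^{\delta,\varphi=1,\wedge}$ from (a) and Lemma~\ref{lem:unwind}'s fully faithful functor $\DAlg_{\bF_p}^{\varphi=1}\to\DAlg_{\bFbar_p}^\perf$, one obtains a commutative square with fully faithful left vertical and equivalence horizontals, forcing the right vertical (the functor in (b)) to be fully faithful. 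The main obstacle throughout is the construction of the canonical equivalence $R\we\bW(R/p)$ in (c): classically this is the uniqueness of Frobenius lifts on $p$-complete rings with perfect reduction, but in the derived setting it requires careful use of Holeman's derived $\delta$-ring framework to extract the canonical lift, possibly through animation arguments in the spirit of Lemma~\ref{lem:animated}.
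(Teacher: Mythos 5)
The overall plan (establish the Witt vector adjunction, identify the right adjoint, use $\h S^1$-fixed points for the $\varphi=1$ variants, and combine with Lemma~\ref{lem:unwind} for (b)) overlaps with the paper's strategy, but there is a genuine gap in your treatment of part (c), which you moreover place first and on which your (a) then depends.

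The problem is the chain of equivalences, specifically the middle step ``$T\we\bW(UT)$.'' Here $\bW$ is the right adjoint to the forgetful $\DAlg_{\bZ_p}^{\delta,\wedge}\to\DAlg_{\bZ_p}^\wedge$, so $\bW(UT)$ is the derived $p$-typical Witt vector \emph{ring of $UT$} (the cofree $\delta$-ring on $UT$), which is far larger than $T$ even when $T$ is perfect: already for $T=\bZ_p$ with its unique $\delta$-structure, $\bW(\bZ_p)$ is an infinite-rank object. What you actually want is the right adjoint to $\DAlg_{\bZ_p}^{\delta,\perf,\wedge}\to\DAlg_{\bZ_p}^{\perf,\wedge}$, which is $R\mapsto\bW(R)^\perf$; the content of (c) is exactly that the counit $\bW(UT)^\perf\to UT$ is an equivalence when $UT/p$ is perfect, and you have not proved this. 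Your appeal to an ``essentially unique Frobenius lift on $R$'' is where the real work is hiding: an arbitrary $p$-complete derived commutative ring with perfect reduction has no given $\delta$-structure, so there is no map $R\to\bW(R/p)$ to compare; the paper instead works with the canonical counit $\bW(R)^\perf\to R$, and proves $\bW(R)^\perf\we\bW(R/p)^\perf$ by an explicit homological argument (bounding homology of $R/p$ and $(\bW(R)/p)^\perf$, analyzing $\pi_0$ along the surjection with nilpotent kernel, and then passing to a cosimplicial resolution). None of that appears in your proposal. Your appeal to the discrete case (where Frobenius lifts on torsion-free $p$-complete rings with perfect reduction are unique) does not carry over formally to the derived setting, and indeed the paper does not argue that way.

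A second, related issue is that you invert the logical order: you want to derive (a) from (c) by relativizing, but the paper derives (c) from (a), because (a) (for $k=\bF_p$) already produces the needed equivalence $\DAlg_{\bF_p}^\perf\we\DAlg_{\bZ_p}^{\delta,\perf,\wedge}$ and in particular the identification of $\bW$ (on perfect $\bF_p$-algebras) with a fully faithful right adjoint whose essential image one can then pin down. Trying to do (c) first means confronting the counit-is-equivalence question for general $R\in\DAlg_{\bZ_p}^{\perf,\wedge}$ with no foothold, which is precisely where your argument goes vague. Your treatments of (a) via $\h S^1$-fixed points and of (b) via Lemma~\ref{lem:unwind} do match the paper, but since (a) in your plan rests on the flawed (c), the whole chain needs the missing argument.
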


\begin{proof}
    For part (a), the second equivalence follows from the first by taking homotopy $S^1$ fixed
    points. Part (b) follows from part (a) and Lemma~\ref{lem:unwind}. For the first equivalence of
    part (a), it is enough to handle the case when $k=\bF_p$, so consider the commutative diagram
    $$\xymatrix{
        \DAlg_{\bZ_p}^{\delta,\wedge}\ar[r]_{(-)\otimes_{\bZ_p}\bF_p}\ar[d]^{(-)_\perf}&\DAlg_{\bF_p}\ar[d]^{(-)_\perf}\\
        \DAlg_{\bZ_p}^{\delta,\perf,\wedge}\ar[r]_{(-)\otimes_{\bZ_p}\bF_p}&\DAlg_{\bF_p}^\perf.
    }$$
    Tracing around the right adjoints, we see that the right adjoint of
    $\DAlg_{\bZ_p}^{\delta,\perf,\wedge}\rightarrow\DAlg_{\bF_p}^\perf$ is given as $k\mapsto\bW(k)$.
    In particular, if $k$ is a perfect derived commutative
    $\bF_p$-algebra, then $\bW(k)$ is perfect as a derived $\delta$-ring. It is also discrete by
    Lemma~\ref{lem:discrete} if $k$ is.
    Thus, for $k$ discrete and perfect, the counit map $\bW(k)\otimes_{\bZ_p}\bF_p\rightarrow k$
    is an equivalence, which follows from the discreteness of $\bW(k)$ and the usual equivalence between perfect $p$-complete
    $\delta$-rings and perfect $\bF_p$-algebras, as in~\cite[Cor.~2.31]{prisms}.

    Now, we use the result from~\cite{mathew-mondal} or~\cite[Thm.~5.20]{brantner-campos-nuiten} to write any $k\in\DAlg_{\bF_p}^\perf$ as the
    limit of a cosimplicial diagram $k^\bullet$ where each $k^n$ is in $\CAlg_{\bF_p}^\perf$. In
    particular, $\bW(k)\we\bW(\Tot k^\bullet)\we\Tot(\bW(k^\bullet))$ and reducing modulo $p$ we
    find that $\bW(k)/p\we\Tot(k^\bullet)\we k$.

    Thus, it follows that
    $\bW\colon\DAlg_{\bF_p}^\perf\rightarrow\DAlg_{\bZ_p}^{\delta,\perf,\wedge}$ is fully faithful.
    On the other hand, given $R\in\DAlg_{\bZ_p}^{\delta,\perf,\wedge}$, the unit map $R\rightarrow
    \bW(R/p)$ is a $p$-adic equivalence as $R/p\rightarrow \bW(R/p)/p\we R/p$ is an equivalence. This
    completes the proof of part (a).

    For part (c), let $\DAlg_{\bZ_p}^{\perf,\wedge}\subseteq\DAlg_{\bZ_p}^\wedge$ be the full
    subcategory consisting of those $p$-complete derived commutative rings such that $R/p$ is
    perfect. By part (a), it is enough to show that the right adjoint to the forgetful functor
    $\DAlg_{\bZ_p}^{\delta,\perf,\wedge}\rightarrow\DAlg_{\bZ_p}^{\perf,\wedge}$ is fully faithful.
    This right adjoint is equivalent to the functor $R\mapsto\bW(R)^\perf$. So, we want to show that for
    $R\in\DAlg_{\bZ_p}^{\perf,\wedge}$ the natural map $\bW(R)^\perf\rightarrow R$ is an equivalence.

    If $R$ is a derived $p$-complete commutative ring, then we claim that
    $\bW(R)^\perf\we\bW((R/p)^\perf)\we\bW(R/p)^\perf$. Note that we do not require either $R$ or
    $\bW(R)$ to be $p$-torsion free, so that $\bW(R)/p$ and $R/p$ might have homology in degree $1$.
    However, as perfect derived commutative $\bF_p$-algebras are coconnective,
    it must be that $(R/p)^\perf$ and $\bW(R)^\perf/p\we(\bW(R)/p)^\perf$ have vanishing homology
    outside of degrees $-1$ and $0$. In particular, $(\bW(R)/p)^\perf\iso(\pi_0(\bW(R)/p))^\perf$
    and $(R/p)^\perf\iso(\pi_0(R/p))^\perf$. The map $\pi_0(\bW(R)/p)\rightarrow\pi_0(R/p)$
    is surjective and
    contains elements annihilated by Frobenius using the equation $\F\V=p$. Thus,
    $\pi_0(\bW(R)/p)\rightarrow\pi_0(R/p)$ is surjective with nilpotent kernel. It follows that the
    two perfections agree. This proves the first equivalence above and 
    a similar argument shows that $\bW((R/p)^\perf)\we\bW(R/p)^\perf$.

    Now, choose a $p$-complete cosimplicial commutative ring $R^\bullet$ such that
    $R\we\Tot(R^\bullet)$. Then, $R^\bullet/p$ is not necessarily perfect in each degree, but
    $\Tot(R^\bullet/p)$ is perfect, so $R/p\we\Tot((R^\bullet/p)^\perf)\we\Tot(R^\bullet/p)$.
    Now, we have equivalences $\bW(R^\bullet)^\perf\we \bW(R^\bullet/p)^\perf\we \bW((R^\bullet/p)^\perf)$
    of cosimplicial objects, where we appeal to~\cite[Lem.~3.2(v)]{bms1} for the first equivalence
    (whose proof in fact applies to the derived inverse limit perfection used here and not merely the classical
    inverse limit perfection)
    and limit preservation of the right adjoint $\bW$ for the second.
    In particular, this shows that $\bW(R)^\perf\we\Tot(\bW(R^\bullet))^\perf$
    is equivalent modulo $p$ to $R/p$, so the counit map $\bW(R)^\perf\rightarrow R$ is a $p$-adic
    equivalence, as desired.
\end{proof}

We perturb our notation momentarily to compare the two notions of Witt vectors constructed for
perfect derived $\bF_p$-algebras in this paper.

\begin{corollary}
    \label{cor:compare}
    Let $\bW^\delta$ denote for the moment the composition of the right adjoint of the extension of scalars functor
    $\DAlg_{\bZ_p}^{\delta,\perf,\wedge}\rightarrow\DAlg_{\bF_p}^{\perf}$ with the forgetful functor
    $\DAlg_{\bZ_p}^{\delta,\perf,\wedge}\rightarrow\DAlg_{\bZ_p}^\wedge$ and let $\bW$ denote the
    functor $\DAlg_{\bF_p}^\perf\rightarrow\DAlg_{\bZ_p}^{\wedge}$ constructed in
    Theorem~\ref{thm:nonconnective}. There is an equivalence of functors $\bW\we\bW^\delta$.
\end{corollary}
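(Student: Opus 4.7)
The plan is to exploit the fact that both $\bW$ and $\bW^\delta$ land in a full subcategory of $\DAlg_{\bZ_p}^\wedge$ on which extension of scalars to $\bF_p$ is an equivalence, and then invoke uniqueness of quasi-inverses. The starting point is that Theorem~\ref{thm:nonconnective}(i) provides a natural equivalence $\bW(k)\otimes_{\bZ_p}\bF_p\we k$ for $k\in\DAlg_{\bF_p}^\perf$. In particular, $\bW(k)/p\we k$ is perfect, so Proposition~\ref{prop:fully_faithful}(c) implies that $\bW(k)$ lies in the essential image of the fully faithful forgetful functor $\DAlg_{\bZ_p}^{\delta,\perf,\wedge}\hookrightarrow\DAlg_{\bZ_p}^\wedge$. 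Thus $\bW$ canonically refines to a functor $\widetilde{\bW}\colon\DAlg_{\bF_p}^\perf\rightarrow\DAlg_{\bZ_p}^{\delta,\perf,\wedge}$.

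First I would observe that by Proposition~\ref{prop:fully_faithful}(a), the extension of scalars functor $(-)\otimes_{\bZ_p}\bF_p\colon\DAlg_{\bZ_p}^{\delta,\perf,\wedge}\rightarrow\DAlg_{\bF_p}^\perf$ is an equivalence, with quasi-inverse given precisely by $\bW^\delta_\mathrm{adj}$ (the right adjoint of extension of scalars in the statement of the corollary). The natural equivalence $\bW(k)\otimes_{\bZ_p}\bF_p\we k$ of Theorem~\ref{thm:nonconnective}(i) therefore lifts to a natural equivalence of functors $\DAlg_{\bF_p}^\perf\rightarrow\DAlg_{\bF_p}^\perf$ exhibiting $\widetilde{\bW}$ as a right inverse to the equivalence $(-)\otimes_{\bZ_p}\bF_p$.

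Next I would invoke the standard fact that any right inverse to an equivalence in an $\infty$-category of functors is canonically equivalent to its quasi-inverse. Concretely, writing $E=(-)\otimes_{\bZ_p}\bF_p$ and using $E\circ \bW^\delta_\mathrm{adj}\we\id$ together with $E\circ\widetilde{\bW}\we\id$, one composes to obtain
$$\widetilde{\bW}\we \bW^\delta_\mathrm{adj}\circ E\circ\widetilde{\bW}\we \bW^\delta_\mathrm{adj},$$
naturally in $k\in\DAlg_{\bF_p}^\perf$. Post-composing both sides with the forgetful functor to $\DAlg_{\bZ_p}^\wedge$ gives the desired equivalence $\bW\we\bW^\delta$.

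The main obstacle is purely bookkeeping: one must verify that the counit equivalence produced by Theorem~\ref{thm:nonconnective}(i) is natural in $k$ and matches the counit of the extension-of-scalars/right-adjoint adjunction (after refining $\bW$ to land in $\DAlg_{\bZ_p}^{\delta,\perf,\wedge}$) so that the chain of natural equivalences above makes sense. Once this matching is in place, the argument is a formal uniqueness-of-inverses computation; no further computation with the non-linear right left extension of Construction~\ref{const:nlrl} is required.
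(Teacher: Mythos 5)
Your proposal is correct and is essentially the same argument as the paper's, just spelled out more explicitly: both proofs reduce via Proposition~\ref{prop:fully_faithful}(a)/(c) to comparing the two functors after extension of scalars to $\bF_p$, and then observe that both compositions are the identity (the one for $\bW$ via Theorem~\ref{thm:nonconnective}(i), the one for $\bW^\delta$ by construction of the equivalence). Your final ``main obstacle'' paragraph overstates the needed bookkeeping slightly: the uniqueness-of-inverses argument only requires that $E\circ\widetilde{\bW}\we\id$ and $E\circ\bW^\delta_{\mathrm{adj}}\we\id$ both hold as natural equivalences, not that they are matched as counits of the same adjunction, so no verification beyond what Theorem~\ref{thm:nonconnective}(i) already supplies is needed.
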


\begin{proof}
    By construction, if $k\in\DAlg_{\bF_p}^\perf$, both $\bW^\delta(k)$ and $\bW(k)$ are in
    $\DAlg_{\bZ_p}^{\perf,\wedge}\subseteq\DAlg_{\bZ_p}^\wedge$. As this $\infty$-category is
    equivalent to $\DAlg_{\bF_p}^\perf$, it is enough to show that
    $\bW^\delta(-)\otimes_{\bZ_p}\bF_p\we\bW(-)\otimes_{\bZ_p}\bF_p$. However, the left-hand functor
    is equivalent to the identity by construction of the equivalence in
    Proposition~\ref{prop:fully_faithful}(c) and the
    right-hand functor is equivalent to the identity by the argument in the second paragraph of the proof of
    Theorem~\ref{thm:nonconnective}.
\end{proof}

Derived $\delta$-rings have all colimits as do derived $\delta$-rings with a trivialization of the
associated Frobenius. It follows that for any space $X$, we can view $\bZ_p^X$ as a derived
$\delta$-ring with a trivialization of Frobenius, or $\bZ_p^{\un,X}$ as a derived $\delta$-ring.

\begin{theorem}[The $p$-adic cochain theorem]\label{thm:delta_cochains}
    \begin{enumerate}
        \item[{\em (a)}] The functor $X\mapsto \bZ_p^X$ of continuous $p$-adic cochains induces
            an equivalence
            $\Pro(\Sscr_{p\fin})^\op\we\DAlg_{\bZ_p}^{\delta,\varphi=1,\wedge}$.
        \item[{\em (b)}] The functor $X\mapsto\bZ_p^{\un,X}$ of continuous cochains with values in
            $\bZ_p^\un$ induces a fully faithful functor
            $\Pro(\Sscr_{p\fin})^\op\rightarrow\DAlg_{\bZ_p^\un}^{\delta,\perf,\wedge}\subseteq\DAlg_{\bZ_p^\un}^{\delta,\wedge}$.
    \end{enumerate}
\end{theorem}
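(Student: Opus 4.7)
The plan is to derive both parts by composing Theorem~\ref{thm:derivedstone} with Proposition~\ref{prop:fully_faithful}. For part (a), the chain of equivalences
\[
\Pro(\Sscr_{p\fin})^\op \xrightarrow{\sim} \DAlg_{\bF_p}^{\varphi=1} \xrightarrow{\sim} \DAlg_{\bZ_p}^{\delta,\varphi=1,\wedge}
\]
coming from Theorem~\ref{thm:derivedstone} and Proposition~\ref{prop:fully_faithful}(a) is formally an equivalence, so the remaining task is to check that the image of $X$ is $\bZ_p^X$. The second equivalence is implemented by $R\mapsto R\otimes_{\bZ_p}\bF_p$ in one direction, with inverse the derived Witt vectors $\bW$. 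As indicated in the paragraph preceding the theorem, $\bZ_p^X$ lies naturally in $\DAlg_{\bZ_p}^{\delta,\varphi=1,\wedge}$, built from $\bZ_p$ via limits indexed by $X$ and filtered colimits in the pro-system. A universal coefficient argument gives $\bZ_p^X\otimes_{\bZ_p}\bF_p\we\bF_p^X$ for compact $p$-finite $X$, and the identification extends to the pro-case because $\bZ_p^X=\colim_i\bZ_p^{X_i}$ for a presentation $X=\{\lim X_i\}$ and mod $p$ reduction preserves filtered colimits. By the equivalence, $\bZ_p^X$ must be the image of $\bF_p^X$ under $\bW$.

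For part (b), I would post-compose the equivalence from (a) with the fully faithful embedding of Proposition~\ref{prop:fully_faithful}(b), yielding a fully faithful functor $\Pro(\Sscr_{p\fin})^\op\to\DAlg_{\bZ_p^\un}^{\delta,\perf,\wedge}$. To identify the image of $X$ as $\bZ_p^{\un,X}$, I would use the equivalences $\DAlg_{\bF_p}^{\varphi=1}\we\DAlg_{\bZ_p}^{\delta,\varphi=1,\wedge}$ and $\DAlg_{\bFbar_p}^{\perf}\we\DAlg_{\bZ_p^\un}^{\delta,\perf,\wedge}$ from Proposition~\ref{prop:fully_faithful}(a); under these, the embedding of (b) corresponds to the extension-of-scalars embedding $\DAlg_{\bF_p}^{\varphi=1}\to\DAlg_{\bFbar_p}^{\perf}$ of Lemma~\ref{lem:unwind}, which sends $\bF_p^X$ to $\bFbar_p^X$. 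Lifting back through $\bW$ then identifies the image as $\bZ_p^{\un,X}$, using that $\bZ_p^{\un,X}$ reduces to $\bFbar_p^X$ modulo $p$ and lies in $\DAlg_{\bZ_p^\un}^{\delta,\perf,\wedge}$.

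The main obstacle I expect is verifying the identification $\bZ_p^X\otimes_{\bZ_p}\bF_p\we\bF_p^X$ at the level of derived commutative rings rather than merely $\bE_\infty$-rings; this is somewhat delicate since $\bZ_p^X$ is typically unbounded in cohomology, for instance when $X=K(\bZ/p,1)$. The universal coefficient theorem gives the identification on underlying spectra, and the derived commutative ring structure is transported by naturality of the tensor product and by writing $\bZ_p^X$ as a cofiltered colimit built from Eilenberg--Mac Lane spaces as in the proof of Theorem~\ref{thm:derivedstone}. Once this is established, the remainder of the argument is a formal composition of equivalences and embeddings already established in Propositions~\ref{prop:free_derived_boolean} and~\ref{prop:fully_faithful}.
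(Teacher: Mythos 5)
Your proposal is correct and follows essentially the same route as the paper's one-sentence proof, which cites Theorem~\ref{thm:derivedstone}, Proposition~\ref{prop:fully_faithful}, and the identifications $\bW(\bF_p^X)\we\bZ_p^X$ and $\bW(\bFbar_p^X)\we\bZ_p^{\un,X}$. The ``main obstacle'' you flag is actually formal: $(-)\otimes_{\bZ_p}\bF_p$ is an equivalence of $\infty$-categories by Proposition~\ref{prop:fully_faithful}(a) and therefore preserves the $X$-indexed limit defining $\bZ_p^X$ in $\DAlg_{\bZ_p}^{\delta,\varphi=1,\wedge}$, giving $\bZ_p^X\otimes_{\bZ_p}\bF_p\we\bF_p^X$ directly, with no universal-coefficient argument required.
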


\begin{proof}
    The result follows from Theorem~\ref{thm:derivedstone} and
    Proposition~\ref{prop:fully_faithful}, using that $\bW(\bF_p^X)\we\bZ_p^X$ and
    $\bW(\bFbar_p^X)\we\bZ_p^{\un,X}$ for any $X$.
\end{proof}

\section{Integral spaces}\label{sec:integral_spaces}

We define derived $\lambda$-rings, giving a brief exposition. The reader can follow the
ideas of Brantner--Mathew~\cite{brantner-mathew}, Raksit~\cite{raksit},
Holeman~\cite{holeman-derived}, or Kubrak--Shuklin--Zakharov~\cite{ksz}.

\begin{lemma}[$\lambda$-ring filtered monad]\label{lem:filteredlambda}
    Let $\Sym_{\bZ}^\lambda$ be the free $\lambda$-ring monad on $\Mod_{\bZ}^{\mathrm{flat}}$, the accessible
    category of flat $\bZ$-modules. Given a free module $\bZ^S$, generated by elements $x_s$ for
    $s\in S$, denote by $\Sym_{\bZ}^{\lambda,\leq
    i}\bZ^S\subseteq\Sym_{\bZ}^\lambda\bZ^S$ the subgroup generated by linear combinations of all monomials of the form
    $$\lambda^{i_1}(x_{s_1})^{e_1}\cdots\lambda^{i_m}(x_{s_m})^{e_m}$$ with $\sum_{a=1}^me_ai_a\leq i$, for
    $s_a\in S$ distinct. Then,
    \begin{enumerate}
        \item[{\em (a)}] $\Sym_{\bZ}^{\lambda,\leq i}$ determines a well-defined subfunctor of
            $\Sym_{\bZ}^\lambda$ for each $i\geq 0$;
        \item[{\em (b)}] the monad multiplication
            $\Sym_{\bZ}^\lambda\circ\Sym_{\bZ}^\lambda\rightarrow\Sym_{\bZ}^\lambda$ restricts to
            $\Sym_{\bZ}^{\lambda,\leq
            j}\circ\Sym_{\bZ}^{\lambda,\leq i}\rightarrow\Sym_{\bZ}^{\lambda,\leq ij}$;
        \item[{\em (c)}] $\Sym_{\bZ}^{\lambda,\leq i}$ is additively polynomial of degree $i$ for each
            $i\geq 0$.
    \end{enumerate}
    Thus, $\Sym_{\bZ}^{\lambda,\leq\star}$ is a filtered additively polynomial monad.
\end{lemma}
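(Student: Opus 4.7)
The plan is to introduce a canonical weight grading on the underlying functor of $\Sym_{\bZ}^\lambda$ and to deduce (a)--(c) by inspection. Since $\Sym_{\bZ}^\lambda$ is a left adjoint from $\bZ$-modules to $\lambda$-rings, it converts direct sums into coproducts of $\lambda$-rings, which are tensor products. In particular, $\Sym_{\bZ}^\lambda(\bZ^S)\iso\bigotimes_{s\in S}\Sym_{\bZ}^\lambda(\bZ)$, and since $\Sym_{\bZ}^\lambda(\bZ)$ is the ring $\Lambda\iso\bZ[\lambda^i(x):i\geq 1]$ of symmetric functions, one gets the polynomial presentation
\[
\Sym_{\bZ}^\lambda(\bZ^S)\iso\bZ[\lambda^i(x_s):s\in S,\,i\geq 1].
\]
The ring $\Lambda$ has a canonical grading with $\lambda^i(x)$ in weight $i$, and tensoring provides a weight grading $\Sym_{\bZ}^\lambda(\bZ^S)=\bigoplus_m(\Sym_{\bZ}^\lambda)_m(\bZ^S)$; this extends to any flat $\bZ$-module via the presentation as a filtered colimit of finite free submodules. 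For naturality, a morphism $\bZ^S\to\bZ^T$ sending $x_s\mapsto\sum_t c_{s,t}x_t$ sends $\lambda^i(x_s)$ to $\lambda^i(\sum_t c_{s,t}x_t)$, and the addition rule $\lambda^i(x+y)=\sum_{a+b=i}\lambda^a(x)\lambda^b(y)$ together with the identities expressing $\lambda^i(nx)$ (for $n\in\bZ$) as a weight-$i$ polynomial in the $\lambda^j(x)$ (which one obtains from $\lambda_t(-x)=\lambda_t(x)^{-1}$ in the negative case) show that this image is a weight-$i$ polynomial in the $\lambda^j(x_t)$. The subgroup in the statement coincides with $\bigoplus_{m\leq i}(\Sym_{\bZ}^\lambda)_m$, so (a) follows.

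For (b), the central identity is the multiplicativity of weight: if $z\in\Sym_{\bZ}^\lambda(M)$ is homogeneous of weight $m$, then $\lambda^j(z)$ is homogeneous of weight $jm$. By the addition and product formulas $\lambda^j(xy)=P_j(\lambda^*x,\lambda^*y)$ (where $P_j$ is bihomogeneous of bidegree $(j,j)$ in the symmetric-function variables), this reduces to the composition identity $\lambda^j(\lambda^i(x))=P_{j,i}(\lambda^*x)$ with $P_{j,i}$ of weight $ij$, a fact about the universal lambda-ring $\Lambda$. Given now a monomial $\prod_k\lambda^{j_k}(y_k)^{e_k}$ with $\sum_k e_kj_k\leq i$ in $\Sym_{\bZ}^{\lambda,\leq i}(\Sym_{\bZ}^{\lambda,\leq j}(M))$, where $y_k\in\Sym_{\bZ}^{\lambda,\leq j}(M)$, its image under the monad multiplication is $\prod_k\lambda^{j_k}(y_k)^{e_k}$ computed in $\Sym_{\bZ}^\lambda(M)$. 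Writing each $y_k=\sum_{n\leq j}y_{k,n}$ as a sum of weight-$n$ pieces and expanding each $\lambda^{j_k}$ by the addition formula, each resulting summand has weight at most $j_k\cdot j$, so the total weight is at most $j\cdot\sum_k e_kj_k\leq ij$, proving (b).

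For (c), the tensor decomposition above gives
\[
(\Sym_{\bZ}^\lambda)_m(V_1\oplus\cdots\oplus V_n)\iso\bigoplus_{a_1+\cdots+a_n=m}(\Sym_{\bZ}^\lambda)_{a_1}(V_1)\otimes\cdots\otimes(\Sym_{\bZ}^\lambda)_{a_n}(V_n).
\]
The $n$-th cross effect picks out the summand where each $a_k\geq 1$, which is empty whenever $n>m$. Hence $(\Sym_{\bZ}^\lambda)_m$ is additively polynomial of degree $\leq m$, and $\Sym_{\bZ}^{\lambda,\leq i}=\bigoplus_{m\leq i}(\Sym_{\bZ}^\lambda)_m$ is additively polynomial of degree $i$. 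The main obstacle is the multiplicativity of weight used in (b); it is routine but rests on the (well-known) homogeneity of the universal lambda-polynomials, which is the content of the graded structure on the ring of symmetric functions.
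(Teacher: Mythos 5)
Your proof is correct and follows essentially the same route as the paper's: both proofs rest on the weight structure on $\Lambda=\Sym_{\bZ}^\lambda(\bZ)$, and both deduce (b) from the degree bounds on the universal polynomials $P_j$ and $P_{j,i}$. The only differences are presentational: you verify that the weight \emph{grading} is preserved under linear maps (using $\lambda_t(-x)=\lambda_t(x)^{-1}$ for the negative-coefficient case), whereas the paper checks only that the \emph{filtration} is preserved and only later passes to the associated graded; and for (c) you read off polynomiality of degree $\leq m$ directly from the vanishing of high cross effects in the tensor decomposition, while the paper computes a derivative and inducts -- the same idea phrased two ways.
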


\begin{proof}
    That $\Sym_{\bZ}^{\lambda,\leq i}$ is well-defined follows from the $\lambda$-addition rule
    $\lambda^i(x+y)=\sum_{a+b=i}\lambda^i(x)\lambda^j(y)$. This implies that it is closed under
    arbitrary maps of free abelian groups and defines a subfunctor of $\Sym_{\bZ}^\lambda$. The general extension to flat $\bZ$-modules follows by
    taking filtered colimits. This proves (a).

    For (b), it is enough to show that if
    $$r=\lambda^{i_1}(x_{s_1})^{e_1}\cdots\lambda^{i_m}(x_{s_m})^{e_m}$$ with $\sum_{a=1}^me_ai_a\leq i$, for
    $s_a\in S$ distinct, representing an element of $\Sym_{\bZ}^{\lambda,\leq i}$, then
    $\lambda^j(r)\in\Sym_{\bZ}^{\lambda,\leq ij}\bZ^S$. This is computed using multiple applications of
    the $\lambda$-multiplication rule
    $$\lambda^j(xy)=P_j(\lambda^1(x),\ldots,\lambda^j(x);\lambda^1(y),\ldots,\lambda^j(y))$$ and the
    $\lambda$-composition rule
    $$\lambda^j(\lambda^i(x))=P_{j,i}(\lambda^1(x),\ldots,\lambda^{ij}(x)).$$
    The universal polynomials $P_j$ and $P_{j,i}$ are defined as follows. Consider the coefficient of $t^j$ in
    the product $$\prod_{1\leq a,b\leq j}(1+x_ay_bt),$$ which is a symmetric polynomial of bidegree
    $(j,j)$ in the $x_a$
    and $y_b$, which can therefore be expressed as a polynomial
    $P_j(e_1,\ldots,e_j;f_1,\ldots,f_j)$, where the $e_a$ and $f_b$ are the elementary symmetric
    functions in the $x_a$ and $y_b$. For $P_{j,i}$, one takes the coefficient of $t^j$ in
    $$\prod_{1\leq a_1,\ldots,a_i\leq ij}(1+x_{a_1}\cdots x_{a_i}t),$$ which is a symmetric
    homogeneous polynomial of degree $ij$, and writes it as a polynomial
    $P_{j,i}(e_1,\ldots,e_{ij})$ of the elementary symmetric polynomials of the $x_a$.
    The claim now follows for degree reasons.

    To show additive polynomiality, it is enough to show it for the graded pieces of the filtration.
    Write $\Sym_{\bZ}^{\lambda,=i}$ for the terms of weight exactly $i$. Then, there is a natural
    decomposition $$\Sym_{\bZ}^{\lambda,=i}(M\oplus
    N)\we\bigoplus_{a+b=i}\Sym_{\bZ}^{\lambda,=a}(M)\otimes\Sym_{\bZ}^{\lambda,=b}(N)$$ for each $i\geq 0$.
    Taking the fiber of this map to $\Sym_{\bZ}^{\lambda,=i}(N)$, which computes the derivative with
    respect to $M$, we find $$\bigoplus_{a+b=i,b\neq
    i}\Sym_{\bZ}^{\lambda,=a}(M)\otimes\Sym_{\bZ}^{\lambda,=b}(N),$$ which is a sum of functors in
    $N$ which are additively polynomial of degrees $\leq i-1$. Thus, the result follows by induction
    from the base case of $i=0$, which is the constant functor on $\bZ$.
\end{proof}

\begin{definition}[Derived $\lambda$-rings]
    Following~\cite[Const.4.2.19]{raksit}, the filtered additively polynomial monad
    $\Sym_{\bZ}^{\lambda,\leq\star}$ extends to a filtered excisively polynomial monad
    $\LSym_{\bZ}^{\lambda,\leq\star}$ on $\D(\bZ)$. We let $\LSym_{\bZ}^\lambda$ denote the colimit and write
    $\DAlg_{\bZ}^\lambda$ for the $\infty$-category of $\LSym_{\bZ}^\lambda$-modules in $\D(\bZ)$.
    The objects of $\DAlg_{\bZ}^\lambda$ are called derived $\lambda$-rings.
\end{definition}

\begin{remark}
    There is a forgetful functor $\DAlg_{\bZ}^\lambda\rightarrow\DAlg_{\bZ}$ which preserves all
    limits and colimits, so derived $\lambda$-rings are monadic and comonadic over
    $\DAlg_{\bZ}$. The right adjoint is by definition $\WW$, the derived big Witt vector functor. 
\end{remark}

\begin{remark}
    The full subcategory of $\DAlg_{\bZ}^\lambda$ consisting of those derived $\lambda$-rings whose
    underlying object in $\D(\bZ)$ is connective is equivalent to the $\infty$-category of animated
    commutative $\lambda$-rings, obtained by freely adjoining sifted colimits to the full
    subcategory of $\CAlg_{\bZ}^\lambda$ on the free $\lambda$-rings $\Sym^\lambda_{\bZ}\bZ^S$ for
    $S$ finite. The full subcategory of objects of $\DAlg_{\bZ}^\lambda$ with discrete underlying
    object is equivalent to the $1$-category of $\lambda$-rings. See~\cite{raksit}
    or~\cite{holeman-derived} for more details.
\end{remark}

\begin{remark}
    The $\lambda$-rings we consider are derived analogs of special $\lambda$-rings in the older
    literature.
\end{remark}

\begin{remark}[Adams operations]
    Every ordinary $\lambda$-ring is equipped with Adams operations, which amounts to an action of
    the multiplicative monoid $\bZ_{>0}$ on each $\lambda$-ring $R$. A given $k\in\bZ_{>0}$
    determines an Adams operation $\psi^k$ and the various Adams operations commute. One way of
    expressing this is that there is a map of monads $\Sym^\psi\rightarrow\Sym^\lambda$, where
    $\Sym^\psi$ is the monad of commutative rings equipped with an action of $\bZ_{>0}$.
    The free $\Sym^\psi$-algebra on a single element $x$ is, as a commutative ring, isomorphic to
    $\bZ[\psi^1(x),\psi^2(x),\psi^3(x),\psi^4(x),\ldots]$, where $\psi^1(x)=x$.
    The Adams operations on this ring are determined by $\psi^j(\psi^i(x))=\psi^{ij}(x)$ for all
    $i,j\geq 1$. The Adams ring monad is also derivable along the same lines as
    Lemma~\ref{lem:filteredlambda}; as
    a consequence, there is a map $\LSym^\psi_{\bZ}\rightarrow\LSym^\lambda_{\bZ}$. Forgetting 
    along this map induces a functor $\DAlg_{\bZ}^\psi\leftarrow\DAlg_{\bZ}^\lambda$.
\end{remark}

\begin{definition}[Perfect $\lambda$-rings]\label{def:perfectlambda}
    A derived $\lambda$-ring $R$ is perfect if $\psi^k\colon R\rightarrow R$ is an equivalence for every integer $k\geq 1$.
    It is enough to ask that $\psi^p$ is an equivalence for each prime $p$.  Let
    $\DAlg_{\bZ}^{\lambda,\perf}\subseteq\DAlg_{\bZ}^\lambda$ be the full subcategory consisting of
    the perfect $\lambda$-rings. The inclusion preserves all limits and colimits and so admits both
    left and right adjoints.
\end{definition}

\begin{remark}
    Fix a prime $p$ and a derived $\lambda$-ring $R$.
    There is a natural functor $\DAlg_{\bZ}^{\lambda}\rightarrow\DAlg_{\bF_p}$ given by
    $(-)\otimes_{\bZ}\bF_p$. If $\psi^p\colon R\rightarrow R$ is an equivalence, then
    $R\otimes_{\bZ}\bF_p$ is a perfect derived commutative $\bF_p$-algebra and hence its higher
    homotopy groups vanish. In particular, this implies that $\pi_0R$ is $p$-torsion free
    and $\pi_i(R)$ is $p$-divisible for each $i>0$. Thus, if $R$ is a perfect derived
    $\lambda$-ring, then
    $\pi_iR$ is a $\bQ$-vector space for each $i>0$ and $\pi_0R$ is torsion free.
\end{remark}

\begin{remark}[Adams and Lambda]
    The Adams operations are more than just commuting ring endomorphisms of a $\lambda$-ring $R$:
    they are commuting $\lambda$-ring endomorphisms of $R$. This follows for example
    from~\cite[Prop.~1.2]{wilkerson}. It follows (arguing for example as
    in~\cite[Const.~2.4.1]{holeman-derived}) that each derived $\lambda$-ring admits a natural action of
    $\bZ_{>0}$ and hence there is an action of $\B\bZ_{>0}$ on $\DAlg_{\bZ}^\lambda$. On
    $\DAlg_{\bZ}^{\lambda,\perf}$, this action extends to an action of $\B\bQ_{>0}$, where
    $\bQ_{>0}$ is the group completion of $\bZ_{>0}$.
\end{remark}

\begin{definition}[Binomial derived rings]
    Let
    $\DAlg_{\bZ}^{\lambda,\psi=1}=(\DAlg_{\bZ}^{\lambda,\perf})^{\h\B\bQ_{>0}}\we(\DAlg_{\bZ}^\lambda)^{\h\B\bZ_{>0}}$.
    We call the objects binomial derived rings. Reasoning as in Lemma~\ref{lem:adjoints2}, the
    forgetful functor $\DAlg_{\bZ}^{\lambda,\psi=1}\rightarrow\DAlg_{\bZ}^{\lambda,\perf}$ preserves
    all limits and adjoints and hence admits left and right adjoints.
\end{definition}

\begin{remark}
    Roughly speaking, a binomial derived ring is a derived $\lambda$-ring $R$ with compatible trivializations of the Adams
    operations $\psi^n\we\id_R$ for all $n\geq 1$.
\end{remark}

\begin{remark}
    Here is the motivation for the terminology above.
    A discrete perfect derived
    $\lambda$-ring defines an object of $\DAlg_{\bZ}^{\psi=1}$ if and only if it is a binomial ring,
    i.e., a torsion free commutative ring $R$ such that
    $$\binom{x}{n}=\frac{x(x-1)\cdots(x-n+1)}{n}\in R$$ for all $x\in R$ and all $n\geq 1$.
    This follows from a result of Elliott~\cite[Prop.~8.3]{elliott}.
\end{remark}

\begin{example}
    The rings $\bZ$, $\bZ_{(p)}$, and $\bQ$, with their unique $\lambda$-ring structures are
    binomial rings.
\end{example}

\begin{remark}
    We expect that a derived $\lambda$-ring $R$ is the data of a derived commutative ring with a
    $\bZ_{>0}$-action such that each $\psi^p$ is a (derived) lift of Frobenius
    (so that $(R,\psi^p)$ is a derived $\delta_p$-ring) and such that the $\psi^k$ are given the
    structures of derived $\delta_p$-ring maps compatibly for all $p$ and $k$.
    However, we do not have a proof at the moment.
\end{remark}

\begin{lemma}\label{cor:lambda_cases}
    For a derived $\lambda$-ring $R$, let $\DAlg_R^\lambda=(\DAlg_\bZ^\lambda)_{R/}$.
    \begin{enumerate}
        \item[{\em (i)}] There is a natural equivalence $\DAlg_{\bQ}^\lambda\we\Fun(\B
            \bZ_{>0},\DAlg_{\bQ})$, i.e., a rational derived $\lambda$-ring is the same as a derived
            commutative $\bQ$-algebra equipped with countably many commuting endomorphisms $\psi^p$.
        \item[{\em (ii)}]
            There is an equivalence
            $\DAlg_{\bZ_{(p)}}^\lambda\we\Fun(\B\bZ_{>0}',\DAlg_{\bZ_{(p)}}^\delta)$,
            where $\bZ_{>0}'$ is the multiplicative monoid of positive integers relatively
            prime-to-$p$.
    \end{enumerate}
\end{lemma}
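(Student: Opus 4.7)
Both parts follow a common template: to identify $\LSym_R^\lambda$ with a simpler monad after extension of scalars to $R$, working at the level of filtered additively polynomial monads as in~\cite{raksit}, where a comparison can be checked on flat generators and then derived. The classical statements are essentially standard; the content lies in organising them at the level of filtered monads so that the derivation procedure applies cleanly and carries the comparison to $\D(R)$.

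For part~(i), I would construct a filtered additively polynomial monad $\Sym_\bZ^{\psi,\leq\star}$ on $\Mod_\bZ^{\mathrm{flat}}$ whose algebras are commutative rings equipped with a $\bZ_{>0}$-action by ring endomorphisms $\psi^n$, assigning $\psi^n(x)$ weight $n$. The proof of the analogue of Lemma~\ref{lem:filteredlambda} is easier here, because the only composition rule is $\psi^m\psi^n=\psi^{mn}$ and each $\psi^n$ is a ring homomorphism. Newton's identities $\psi^n=\lambda^1\psi^{n-1}-\lambda^2\psi^{n-2}+\cdots+(-1)^{n-1}n\lambda^n$ then provide a filtered map of monads $\Sym_\bZ^{\psi,\leq\star}\to\Sym_\bZ^{\lambda,\leq\star}$, while the inverse Newton formula (which expresses $\lambda^n$ as a polynomial in $\psi^1,\ldots,\psi^n$ with denominator $n!$) becomes available after $\otimes_\bZ\bQ$ and provides a filtered inverse. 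Deriving via~\cite[Const.~4.2.19]{raksit} yields an equivalence $\LSym_\bQ^\psi\we\LSym_\bQ^\lambda$ of derived monads on $\D(\bQ)$, and the corresponding identification of $\infty$-categories of algebras is exactly the statement.

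For part~(ii), I would exploit the multiplicative monoid splitting $\bZ_{>0}\iso\bN\times\bZ_{>0}'$ sending $n=p^am$ with $\gcd(m,p)=1$ to $(a,m)$. Under this splitting the $\bN$-factor describes the iterates of $\psi^p$, which over $\bZ_{(p)}$ refine to a $\delta_p$-structure by the congruence $\psi^p(x)\equiv x^p\pmod p$, while the $\bZ_{>0}'$-factor describes commuting ring endomorphisms $\psi^\ell$ for primes $\ell\neq p$; since each such $\psi^\ell$ commutes with $\psi^p$, it acts by $\delta_p$-ring maps. Classically, Wilkerson's theorem~\cite{wilkerson}, applied to the $p$-torsion-free free $\lambda$-ring on a set over $\bZ_{(p)}$, identifies a $\lambda$-structure with precisely this package of data. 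Combining Holeman's derivation of the $\delta$-ring monad in~\cite{holeman-derived} with a free-$\bZ_{>0}'$-action construction then produces a filtered additively polynomial comparison map to $\Sym_{\bZ_{(p)}}^{\lambda,\leq\star}$, which I would check is an equivalence on flat generators and then derive.

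The hardest step will be the part~(ii) comparison: reconciling the $\lambda$-weight grading on $\Sym_{\bZ_{(p)}}^{\lambda,\leq\star}$ with the grading on the combined $\delta_p$-plus-$\bZ_{>0}'$-action monad, and verifying that the Wilkerson identification is compatible with filtrations on both sides. In part~(i) the analogous step is mild, because tensoring with $\bQ$ makes the inverse Newton formula literally available as a filtered map; the subtlety in (ii) is that over $\bZ_{(p)}$ no such direct inversion exists and one must argue via additive polynomiality of each graded piece.
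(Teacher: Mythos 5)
Your plan is correct and matches the paper's route, whose proof of (ii) amounts to the one-sentence assertion that $\LSym_{\bZ_{(p)}}^\lambda$ ``controls the data of a $\delta_p$-ring together with commuting endomorphisms $\psi^\ell$'' --- exactly the Wilkerson identification you invoke, also cited by the paper in the ``Adams and Lambda'' remark --- with (i) ``being similar,'' i.e.\ your inverse-Newton argument over $\bQ$. One adjustment to your plan: you do not need to reconcile the $\lambda$-weight filtration with a filtration on the $\delta_p$-plus-$\bZ_{>0}'$ side, which you flag as the hardest step. The derived monad produced by the Raksit--Holeman construction is the nonlinear right-left Kan extension of the free-algebra functor restricted to $\Vect_{\bZ_{(p)}}^\omega$; the filtration is used only to verify the right-extendability hypothesis and does not enter into determining the resulting functor once that hypothesis is secured. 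Hence an unfiltered natural isomorphism of the classical free-algebra functors on flat $\bZ_{(p)}$-modules (Wilkerson, plus the monoid splitting $\bZ_{>0}\iso\bN\times\bZ_{>0}'$ you describe), together with separate verifications of right-extendability --- for which each side may carry its own filtration --- already yields the equivalence of derived monads and hence of their $\infty$-categories of modules. The same remark applies in case (i), though there, as you note, the filtered comparison is in any event immediate from the weight-$n$ placement of $\psi^n$ and Newton's identities.
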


\begin{proof}
    We give the proof of (ii), the proof of (i) being similar. The $\infty$-category
    $\DAlg_{\bZ_{(p)}}^{\lambda}$ is also monadic and comonadic over $\DAlg_{\bZ_{(p)}}$ and one can
    check that it is obtained as modules for the $\LSym_{\bZ_{(p)}}^\lambda$-monad, which is
    obtained by deriving the free $\lambda$-ring monad over $\bZ_{(p)}$. However, this monad
    controls the data of a $\delta_p$-ring together with commuting endomorphisms $\psi^\ell$ of the
    $\delta_p$-ring.
\end{proof}

\begin{remark}[Categories of models II]
    We expect that the homotopy theories of cosimplicial $\lambda$-rings, cosimplicial perfect
    $\lambda$-rings, and cosimplicial binomial rings to recover the $\infty$-categories
    $\DAlg_{\bZ}^{\lambda,\ccn}$, $\DAlg_{\bZ}^{\lambda,\perf,\ccn}$, and
    $\DAlg_{\bZ}^{\psi=1,\ccn}$, respectively.
    Ekedahl in particular has studied cosimplicial binomial rings in~\cite{ekedahl-minimal}.
\end{remark}

\begin{lemma}\label{lem:binomial_cases}
    For a binomial derived ring $R$, let $\DAlg_R^{\psi=1}=(\DAlg_\bZ^{\psi=1})_{R/}$.
    \begin{enumerate}
        \item[{\em (i)}] The forgetful functor $\DAlg_{\bQ}^{\psi=1}\rightarrow\DAlg_{\bQ}$ is an
            equivalence, i.e., a rational binomial derived ring is the same thing as a derived
            commutative $\bQ$-algebra.
        \item[{\em (ii)}] The forgetful functor
            $\DAlg_{\bZ_{(p)}}^{\psi=1}\rightarrow\DAlg_{\bZ_{(p)}}^{\delta,\varphi=1}$ is an
            equivalence.
    \end{enumerate}
\end{lemma}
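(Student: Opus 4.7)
The plan is to reduce each claim to a homotopy fixed-point calculation using Lemma~\ref{cor:lambda_cases}, together with the observation that ``trivializing the tautological action'' on a functor $\infty$-category recovers the target.

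For part (i), Lemma~\ref{cor:lambda_cases}(i) gives $\DAlg_{\bQ}^\lambda \simeq \Fun(\B\bZ_{>0}, \DAlg_{\bQ})$, and passing to the perfect subcategory corresponds to extending the $\bZ_{>0}$-action to its group completion $\bQ_{>0}$, so $\DAlg_{\bQ}^{\lambda,\perf} \simeq \Fun(\B\bQ_{>0}, \DAlg_{\bQ})$. Under this equivalence, the $\B\bQ_{>0}$-action by Adams operations becomes the \emph{tautological} one, where $g\in\bQ_{>0}$ acts on a functor $F$ via the natural transformation with component $F(g)\colon F\Rightarrow F$. The homotopy $\B\bQ_{>0}$-fixed points of this tautological action should recover $\DAlg_{\bQ}$ via the constant-functor embedding, and the resulting forgetful map to $\DAlg_{\bQ}$ is evaluation at the basepoint; this yields (i).

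For part (ii), the same procedure applied to Lemma~\ref{cor:lambda_cases}(ii) gives $\DAlg_{\bZ_{(p)}}^{\lambda,\perf} \simeq \Fun(\B\bQ_{>0}', \DAlg_{\bZ_{(p)}}^{\delta,\perf})$: perfection forces invertibility of both the $\delta$-ring Frobenius and each Adams operation $\psi^\ell$ for $\ell\neq p$. Decomposing $\bQ_{>0}\cong \bZ\oplus\bQ_{>0}'$ with the $\bZ$-factor generated by $p$, the $\B\bQ_{>0}$-action splits as a tautological $\B\bQ_{>0}'$-action on the functor category together with a $\B\bZ$-action by $\psi^p=\varphi$, the Frobenius of the $\delta$-ring. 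Taking homotopy fixed points in stages: the $\B\bQ_{>0}'$-fixed points recover $\DAlg_{\bZ_{(p)}}^{\delta,\perf}$ by the computation of (i), and the remaining $\B\bZ$-fixed points yield $\DAlg_{\bZ_{(p)}}^{\delta,\perf,\varphi=1}$, which equals $\DAlg_{\bZ_{(p)}}^{\delta,\varphi=1}$ because any trivialization $\varphi\simeq\id$ automatically makes the Frobenius invertible.

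The hard part is justifying the tautological fixed-point identity $(\Fun(\B G,\Cscr))^{\h\B G}\simeq\Cscr$ for a discrete abelian group $G$ (here $\bQ_{>0}$ or $\bQ_{>0}'$), where $\B G$ acts via the $\bE_\infty$-monoid map $G\to\End(\id_{\Fun(\B G,\Cscr)})$ sending $g$ to the natural transformation $F\mapsto F(g)$. The embedding $\Cscr\to(\Fun(\B G,\Cscr))^{\h\B G}$ of constant functors is well-defined because constant functors carry a canonical trivialization of the tautological action. For the inverse, one shows that a coherent system of null-homotopies of all the endomorphisms $F(g)$ extends the classifying map of $F$ along the homotopy quotient $\B G \to \B G/_{\h}G\simeq \ast$, producing a constant functor. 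The subtlety to verify is that the $\B G$-action arising from Adams operations matches the action induced from translation on $\B G$; both are classified by the same underlying $\bE_\infty$-monoid map into $\End(\id)$ and hence compute the same fixed points.
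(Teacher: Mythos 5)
Your proposal is correct and takes essentially the same approach as the paper: reduce via Lemma~\ref{cor:lambda_cases} to the identity $\Fun(\B G,\Cscr)^{\h\B G}\simeq\Cscr$, then for part (ii) split $\bQ_{>0}\cong\bQ_{>0}'\times\bZ$ and take fixed points in stages. Where you differ is in how you justify the key fixed-point identity, and it's worth comparing. The paper's footnote argues via Kan extensions: $\Fun(\B G,\Cscr)$, \emph{with its $\B G$-action}, is produced in one stroke as the right Kan extension of $\ast\to\Pr^\L$ along $\ast\to\B^2G$, so that taking $\h\B G$-fixed points is the further Kan extension along $\B^2G\to\ast$; since $\ast\to\B^2G\to\ast$ is the identity, the composite is the identity. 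This packages the $\B G$-action and the fixed-point computation simultaneously, so the matching-of-actions problem you flag never surfaces as a separate step. Your argument instead identifies the action as ``tautological'' by hand and then asserts that this agrees with the translation action; this is true for a discrete abelian group $G$, but your sketch (coherent null-homotopies extending the classifying map along $\B G\to\ast$) leaves exactly that coherence to the reader, which is precisely the subtlety you acknowledge but do not discharge. So the content is the same; the paper's route is cleaner because it never needs to compare two descriptions of the $\B G$-action.

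One small clarification you could add: it is worth making explicit that $\DAlg_\bQ^{\lambda,\perf}\simeq\Fun(\B\bQ_{>0},\DAlg_\bQ)$ follows because ``perfect'' (all $\psi^k$ invertible) corresponds precisely to a functor out of $\B\bZ_{>0}$ factoring through the localization $\B\bZ_{>0}\to\B\bQ_{>0}$, and similarly for $\bZ_{(p)}$ and $\bQ_{>0}'$; you state this but don't justify it, and it is the step that turns the monoid $\bZ_{>0}$ into the group needed for the $\B^2 G$ bookkeeping.
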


\begin{proof}
    We give the proof of (i). The proof of (ii) is similar. Let $\Cscr$ be any $\infty$-category and
    let $G$ be a grouplike $\bE_2$-space. Then, the natural ``constant action'' functor
    $\Cscr\rightarrow\Fun(\B G,\Cscr)^{\h\B G}$ is an equivalence.\footnote{The functor category
    $\Fun(\B G,\Cscr)$ is the right Kan extension of the map $\ast\rightarrow\Pr^\L$ classifying
    $\Cscr$ along the map $\ast\rightarrow\B^2 G$. Taking $\B G$-invariants corresponds to further
    right Kan extending $\B^2 G\rightarrow\Pr^\L$ along $\B^2 G\rightarrow\ast$. But, the
    composition is the identity. Indeed, the two Kan extensions are the right adjoints to the
    pullback functors $\Fun(\ast,\Pr^\L)\rightarrow\Fun(\B^2
    G,\Pr^\L)\rightarrow\Fun(\ast,\Pr^\L)$ whose composition is the identity.} Now, use
    Corollary~\ref{cor:lambda_cases}.
\end{proof}

\begin{lemma}[The binomial fracture square]\label{lem:binomial_fracture}
    The $\infty$-category $\DAlg_\bZ^{\psi=1}$ is equivalent to the pullback
    $$\xymatrix{
        \DAlg_\bZ^{\psi=1}\ar[r]\ar[d]&\DAlg_{\bQ}^{\Delta^1}\ar[d]^{\text{{\em codomain}}}\\
        \prod_p\DAlg_{\bZ_p}^{\delta,\varphi=1,\wedge}\ar[r]&\DAlg_{\bQ}.
    }$$ It follows that for any $R,S\in\DAlg_{\bZ}^{\psi=1}$, there is a pullback square
    $$\xymatrix{
        \Map_{\DAlg_{\bZ}^{\psi=1}}(R,S)\ar[r]\ar[d]&\Map_{\DAlg_{\bQ}}(R_\bQ,S_\bQ)\ar[d]\\
        \prod_p\Map_{\DAlg_{\bZ_p}^{\delta,\varphi=1,\wedge}}(R_p^\wedge,S_p^\wedge)\ar[r]&\Map_{\DAlg_{\bQ}}(R_{\bQ},(\prod_p S_p^\wedge)_{\bQ}).
    }$$
\end{lemma}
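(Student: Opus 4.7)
The plan is to deduce the pullback square from the arithmetic fracture square in $\D(\bZ)$ by lifting it through the monadic forgetful functor $\DAlg_\bZ^\lambda\rightarrow\D(\bZ)$ and then passing to $\B\bZ_{>0}$-homotopy fixed points. The standard arithmetic fracture assembles into an equivalence of presentable symmetric monoidal $\infty$-categories
$$\D(\bZ)\we\D(\bQ)\times_{\D(\bQ)}\prod_p\D(\bZ_p)^\wedge,$$
using the codomain functor on $\D(\bQ)^{\Delta^1}$ and the rationalization of completions $(M_p)\mapsto(\prod_p M_p)_\bQ$. Because $\DAlg_\bZ^\lambda\rightarrow\D(\bZ)$ is monadic and so creates limits, and because the monad $\LSym^\lambda_\bZ$ is compatible with rationalization and $p$-completion (the induced monad on $\D(\bQ)$, resp.\ $\D(\bZ_p)^\wedge$, is the rational, resp.\ $p$-complete, $\lambda$-ring monad), this fracture lifts to
$$\DAlg_\bZ^\lambda\we\DAlg_\bQ^\lambda\times_{\DAlg_\bQ^\lambda}\prod_p\DAlg_{\bZ_p}^{\lambda,\wedge}.$$

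Next, apply $(-)^{\h\B\bZ_{>0}}$. Since homotopy fixed points commute with limits and the left-hand side is $\DAlg_\bZ^{\psi=1}$ by definition, one obtains
$$\DAlg_\bZ^{\psi=1}\we(\DAlg_\bQ^\lambda)^{\h\B\bZ_{>0}}\times_{(\DAlg_\bQ^\lambda)^{\h\B\bZ_{>0}}}\prod_p(\DAlg_{\bZ_p}^{\lambda,\wedge})^{\h\B\bZ_{>0}}.$$
Lemma~\ref{lem:binomial_cases}(i) identifies the rational factor with $\DAlg_\bQ$, and the $p$-complete version of Lemma~\ref{lem:binomial_cases}(ii) identifies each $p$-local factor with $\DAlg_{\bZ_p}^{\delta,\varphi=1,\wedge}$. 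Reorganizing the pullback using the codomain projection $\DAlg_\bQ^{\Delta^1}\rightarrow\DAlg_\bQ$ yields the first square of the statement.

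For the mapping space version, the functor $\Map_{\DAlg_\bZ^{\psi=1}}(R,-)$ preserves limits, so applying it to the fracture for $S$ produces the desired pullback. The identifications $\Map_{\DAlg_\bZ^{\psi=1}}(R,S_\bQ)\we\Map_{\DAlg_\bQ}(R_\bQ,S_\bQ)$ and $\Map_{\DAlg_\bZ^{\psi=1}}(R,S_p^\wedge)\we\Map_{\DAlg_{\bZ_p}^{\delta,\varphi=1,\wedge}}(R_p^\wedge,S_p^\wedge)$ then follow from the adjunctions between rationalization and $p$-completion (left adjoints) and the forgetful functors into $\DAlg_\bZ^{\psi=1}$ (right adjoints) arising from the Lemma~\ref{lem:binomial_cases} identifications.

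The main obstacle is the monadic lift from $\D(\bZ)$ to $\DAlg_\bZ^\lambda$: one must know that $\LSym^\lambda_\bZ$ behaves well with respect to rationalization and $p$-completion, so that the pullback assembled from coherent rational and $p$-complete data carries a canonical derived $\lambda$-ring refinement. This follows from the filtered additively polynomial structure of Lemma~\ref{lem:filteredlambda}, since each graded piece $\Sym^{\lambda,=i}_\bZ$ is represented by a flat $\bZ$-module whose base change to $\bQ$ (respectively, $p$-completion) produces the corresponding graded piece of the rational (respectively, $p$-complete) $\lambda$-ring monad.
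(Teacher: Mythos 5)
Your proof is correct and takes essentially the same route as the paper: lift the arithmetic fracture square from $\D(\bZ)$ to the derived $\lambda$-ring/binomial level, then apply Lemma~\ref{lem:binomial_cases} to identify the corners, and deduce the mapping-space statement by preservation of limits. One small notational slip: the displayed pullbacks should have $\D(\bQ)^{\Delta^1}$ (resp.\ $\DAlg_{\bQ}^{\lambda,\Delta^1}$) in the upper-right corner rather than $\D(\bQ)$ (resp.\ $\DAlg_{\bQ}^{\lambda}$) --- you say this in words via the codomain functor, but the displays omit the $\Delta^1$, which as written would make those pullbacks trivial.
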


\begin{proof}
    The proof uses the standard fracture square for objects of $\D(\bZ)$, the fact that
    $\DAlg_{\bZ}^{\psi=1}$ is closed under products, completions, and tensor products, and
    Lemma~\ref{lem:binomial_cases}. The second part is a diagram chase for mapping spaces in
    pullback $\infty$-categories.
\end{proof}

\begin{definition}[$\bZ$-finite spaces]
    Say that a space $X\in\Sscr$ is nilpotent if for each $x\in X$ the group
    $\pi_1(X,x)$ is nilpotent and acts nilpotently on $\pi_i(X,x)$ for each $i\geq 2$. Let
    $\Sscr_{\nil}\subseteq\Sscr$ be the full subcategory of nilpotent spaces.
    We will say that $X$ is $\bZ$-finite if it is nilpotent, $n$-truncated for some $n$, and has finitely many components
    $Y\subseteq X$, each of which satisfies the following conditions:
    \begin{enumerate}
        \item[(1)] $\pi_1(Y)$ is finitely presented;
        \item[(2)] $\pi_i(Y)$ is a finitely presented abelian group for $i\geq 2$.
    \end{enumerate}
    Let $\Sscr_{\bZ\fin}\subseteq\Sscr_\nil$ be the full subcategory of $\bZ$-finite spaces. Say that
    a nilpotent space is $\bZ$-finite type if it satisfies the conditions of being $\bZ$-finite
    except that it is not required to be truncated; denote the full subcategory of $\Sscr$
    consisting of such spaces by $\Sscr_{\bZ\ft}$. Thus, we have inclusions
    $$\Sscr_{\bZ\fin}\subseteq\Sscr_{\bZ\ft}\subseteq\Sscr_\nil.$$
\end{definition}

\begin{construction}
    As $\Sscr_{\bZ\fin}$ is a full subcategory of $\Sscr$, we can define a functor
    $\Pro(\Sscr_{\bZ\fin})\rightarrow\Sscr$ by right Kan extension, also known as taking the limit
    of a pro-$\bZ$-finite space. This is the right adjoint of an
    adjunction $$\Sscr\rightleftarrows\Pro(\Sscr_{\bZ\fin}),$$ where the left adjoint is
    pro-finite nilpotent completion $X\mapsto X^\nil$. If $X\in\Sscr_{\bZ\ft}$, then $X^\nil$ is
    equivalent to the pro-$\bZ$-finite space $\tau_{\leq\star}X$. As $X\we\lim\tau_{\leq\star}X$,
    the unit of the adjunction is an equivalence for $X\in\Sscr_{\bZ\ft}$ so taking Postnikov
    towers gives a fully faithful functor $\Sscr_{\bZ\ft}\rightarrow\Pro(\Sscr_{\bZ\fin})$.
\end{construction}

\begin{example}
    If $X\in\Sscr^\omega$ is simply connected, then $X$ is in $\Sscr_{\bZ\ft}$. It is in
    $\Sscr_{\bZ\fin}$ if and only if it is truncated.
\end{example}

\begin{theorem}[The integral cochain theorem]\label{thm:integral_cochain}
    The natural functor $\Sscr_{\bZ\ft}^\op\rightarrow\DAlg_{\bZ}^{\psi=1}$ is fully faithful. More
    generally, for any space $Y\in\Sscr$ and $X\in\Pro(\Sscr_{\bZ\fin})$, the natural map
    $$\Map_\Sscr(Y,\lim
    X)\we\Map_{\Pro(\Sscr_{\bZ\fin})}(Y^\nil,X)\rightarrow\Map_{\DAlg_{\bZ}^{\psi=1}}(\bZ^X,\bZ^Y)$$
    is an equivalence, where $\bZ^X$ denotes continuous cochains in $\bZ$.
\end{theorem}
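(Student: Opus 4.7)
The plan is to combine two fracture squares. After reducing to the case when $X \in \Sscr_{\bZ\fin}$ (both sides convert cofiltered limits in the pro-object $X$ to limits of mapping spaces) and $Y = \ast$ (both sides take $Y$-colimits to limits, using that $\bZ^Y = \lim_{y \in Y} \bZ$), the goal becomes to show that the unit map $X \to \Map_{\DAlg_\bZ^{\psi=1}}(\bZ^X, \bZ)$ is an equivalence for each nilpotent $\bZ$-finite $X$.

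Applying the binomial fracture square (Lemma~\ref{lem:binomial_fracture}) with $R = \bZ^X$ and $S = \bZ$, the right-hand side becomes the pullback of $\Map_{\DAlg_\bQ}(\bQ^X, \bQ)$ and $\prod_p \Map_{\DAlg_{\bZ_p}^{\delta,\varphi=1,\wedge}}(\bZ_p^X, \bZ_p)$ over $\Map_{\DAlg_\bQ}(\bQ^X, (\prod_p \bZ_p)_\bQ)$. The strategy is to identify this termwise with Sullivan's arithmetic fracture square for $X$, which presents $X$ as the pullback of $X_\bQ$ and $\prod_p X_p^\wedge$ over $(\prod_p X_p^\wedge)_\bQ$. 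The top-right piece $\Map_{\DAlg_\bQ}(\bQ^X, \bQ) \simeq X_\bQ$ is Sullivan's theorem (Theorem~\ref{thm:sullivan}, using that $X_\bQ \in \Sscr_{\bQ\fin}$ when $X$ is $\bZ$-finite and that $\bQ^X \simeq \bQ^{X_\bQ}$), and the bottom-left $p$-adic pieces are the $p$-adic cochain theorem (Theorem~\ref{thm:delta_cochains}).

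The main obstacle is the bottom-right corner: proving $\Map_{\DAlg_\bQ}(\bQ^X, (\prod_p \bZ_p)_\bQ) \simeq (\prod_p X_p^\wedge)_\bQ$. I proceed by Postnikov induction on the nilpotent space $X$. For the base case $X = K(A, n)$ with $A$ finitely generated abelian and $n \geq 1$, the derived commutative $\bQ$-algebra $\bQ^{K(A,n)}$ is a free $\LSym_\bQ$-algebra on a finite-dimensional generator in degree $-n$ (dual to $A \otimes \bQ$), so direct computation gives $\Map_{\DAlg_\bQ}(\bQ^{K(A,n)}, (\prod_p \bZ_p)_\bQ) \simeq K(A \otimes (\prod_p \bZ_p) \otimes \bQ, n)$, matching $(\prod_p K(A, n)_p^\wedge)_\bQ$ via the identification $A \otimes \prod_p \bZ_p \simeq \prod_p A_p^\wedge$ valid for finitely generated $A$. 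For the inductive step, a principal fibration $X \to B \to K(A, n+1)$ induces via Eilenberg--Moore (where nilpotence of $X$ is essential, cf.\ the proof of Theorem~\ref{thm:sullivan}) a pushout $\bQ^X \simeq \bQ \otimes_{\bQ^{K(A, n+1)}} \bQ^B$ in $\DAlg_\bQ$; applying $\Map_{\DAlg_\bQ}(-, (\prod_p \bZ_p)_\bQ)$ and invoking the inductive hypothesis together with the base case yields exactly the pullback diagram that computes $(\prod_p X_p^\wedge)_\bQ$ by $p$-completing and rationalizing the original nilpotent fibration. With all three pieces identified compatibly with the fracture structure maps, the two fracture pullbacks agree and the theorem follows.
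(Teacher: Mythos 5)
Your proof follows the same strategy as the paper: reduce to $Y=\ast$ and $X\in\Sscr_{\bZ\fin}$, apply the binomial fracture square of Lemma~\ref{lem:binomial_fracture}, identify each corner with the corresponding term of Sullivan's arithmetic square for $X$, and conclude. Where you deviate is in the treatment of the bottom-right corner $\Map_{\DAlg_\bQ}(\bQ^X,(\prod_p\bZ_p)_\bQ)$: the paper rewrites this as $X_\bA=(\prod_p X_p^\wedge)_\bQ$ tersely, citing Theorems~\ref{thm:sullivan} and~\ref{thm:delta_cochains}, whereas you spell out a Postnikov induction, handling the base case $K(A,n)$ directly from the free-$\LSym_\bQ$-algebra description of $\bQ^{K(A,n)}$ and the isomorphism $A\otimes\prod_p\bZ_p\cong\prod_p A_p^\wedge$ for $A$ finitely generated, and then bootstrapping via Eilenberg--Moore using that $p$-completion, products, and rationalization all preserve the relevant principal fibration sequences. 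Since $(\prod_p\bZ_p)_\bQ$ is not $\bQ$ and Theorem~\ref{thm:sullivan} as stated only computes maps into $\bQ$, your explicit inductive argument is exactly the kind of verification needed to make the paper's rewriting rigorous; the paper leaves this to the reader. Both proofs are equally brief about the compatibility of the three corner identifications with the structure maps of the fracture square, which is the remaining point one would want to verify, but your overall argument is correct and matches the paper's.
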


\begin{proof}
    As in the proof of Theorem~\ref{thm:derivedstone}, we can pull $Y$ out and reduce to the case
    where $Y\we\ast$. We also have that if $X$ is in $\Sscr_{\bZ\ft}$, then the natural map
    $\colim_n\bZ^{\tau_{\leq n}X}\rightarrow\bZ^X$ is an equivalence. Moreover, the profinite completion
    of $\bZ^X$ is $\widehat{\bZ}^X$, the rationalization of $\bZ^X$ is $\bQ^X$, and the rationalization of
    the profinite completion is $\bA^X$, where $\bA=(\prod_p\bZ_p)\otimes_{\bZ}\bQ$ is the ring of
    finite ad\`eles. By Lemma~\ref{lem:binomial_fracture}, there is a pullback square
    $$\xymatrix{
        \Map_{\DAlg_{\bZ}^{\psi=1}}(\bZ^X,\bZ)\ar[r]\ar[d]&\Map_{\DAlg_{\bQ}}(\bQ^X,\bQ)\ar[d]\\
        \prod_p\Map_{\DAlg_{\bZ_p}^{\delta,\varphi=1,\wedge}}(\bZ_p^X,\bZ_p)\ar[r]&\Map_{\DAlg_{\bQ}}(\bQ^X,\bA),
    }$$
    which can be rewritten as
    $$\xymatrix{
        \Map_{\DAlg_{\bZ}^{\psi=1}}(\bZ^X,\bZ)\ar[r]\ar[d]&X_\bQ\ar[d]\\
        \prod_p X_{\bF_p}\ar[r]&X_{\bA}
    }$$
    using Theorems~\ref{thm:sullivan} and~\ref{thm:delta_cochains}.
    The map from $X$ to the pullback is an equivalence by the arithmetic square for nilpotent spaces
    (see for example~\cite[Prop.~3.20]{sullivan-geometric}).
\end{proof}

\begin{remark}
    To\"en describes in~\cite{toen-schematisation} the homotopy sheaves of the affine stack associated to a simply connected
    space: $$\pi_i(\Spec\R\Gamma(\bZ^X))\iso\pi_iX\otimes_{\bZ}\bH,$$ where $\bH$ is the group
    scheme $\Spec\bZ\binom{x}{\bullet}$. This result can be deduced from
    Theorem~\ref{thm:integral_cochain} by using that the cofree binomial ring on $R$ is $\bH(R)$.
\end{remark}

\section{Synthetic spaces}\label{sec:synthetic}

Our last model for $p$-adic spaces is via the synthetic cochain functor.
Let $\bS_\syn^\un$ be the $p$-complete even filtration on $\bS_p^\un$. Given a space, we can form
$\bS_\syn^{\un,X}$, which we will call the unramified $p$-adic synthetic cochains on $X$. It naturally admits
the structure of an $\bE_\infty$-ring in complete filtered $\bS_\syn^\un$-module spectra.
While not relevant for the theorem below, we note
that $\bS_\syn^{\un,X}\we\F^\star_\syn\widehat{\bS\bW}(\bFbar_p^X)$ in the context of
Theorem~\ref{thm:nonconnective}.

\begin{theorem}\label{thm:synthetic}
    The functor
    $\bS_\syn^{\un,(-)}\colon\Sscr_{p\ft}^\op\rightarrow\CAlg(\FDhat(\bS_\syn^\un)_p^\wedge)$ is fully
    faithful.
\end{theorem}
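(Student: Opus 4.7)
The plan mirrors the proof of Theorem~\ref{thm:derivedstone} (the mod $p$ cochain theorem), adapted to the synthetic setting. First, as in that argument, both sides of the natural transformation
\[
    \Map_\Sscr(Y, X) \to \Map_{\CAlg(\FDhat(\bS_\syn^\un)_p^\wedge)}(\bS_\syn^{\un, X}, \bS_\syn^{\un, Y})
\]
send colimits in $Y$ to limits, so it suffices to treat $Y=\ast$ and show that the natural map $X \to \Map_{\CAlg(\FDhat(\bS_\syn^\un)_p^\wedge)}(\bS_\syn^{\un, X}, \bS_\syn^\un)$ is an equivalence for $X \in \Sscr_{p\ft}$.

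Second, because $X$ is $p$-complete nilpotent of finite type, I would induct up the Postnikov tower, at each stage placing $X$ in a pullback
\[
    \xymatrix{X\ar[r]\ar[d]&\ast\ar[d]\\ Z\ar[r]&K(M, n+1)}
\]
with $M$ a finitely generated $\bZ_p$-module, $n\geq 1$, and the conclusion inductively assumed for $Z$. I expect $\bS_\syn^{\un, (-)}$ to convert such pullbacks into pushouts in $\CAlg(\FDhat(\bS_\syn^\un)_p^\wedge)$ via a synthetic Eilenberg--Moore theorem; the verification should proceed by reducing to the associated graded, where by Theorem~\ref{thm:nonconnective}(iv) the question becomes one about $\bZ_p^{\un, (-)}$-cochains that is supplied by the proof of Theorem~\ref{thm:delta_cochains}, and then re-assembling using completeness of the filtration on both sides of the comparison map.

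Third, the remaining base case is to identify $\Map_{\CAlg(\FDhat(\bS_\syn^\un)_p^\wedge)}(\bS_\syn^{\un, K(M,n)}, \bS_\syn^\un)$ with $K(M, n)_p^\wedge$ for $M$ a finitely generated $\bZ_p$-module and $n\geq 1$. The cleanest route uses the identification $\bS_\syn^{\un, X} \simeq \F^\star_\syn \widehat{\bS\bW}(\bFbar_p^X)$ recorded just before the theorem, together with the non-linear right left Kan extension description of $\F^\star_\syn \widehat{\bS\bW}$ from Theorem~\ref{thm:nonconnective}. Via the adjunction-like property implicit in that construction, one reduces the mapping space to $\Map_{\DAlg_{\bFbar_p}^\perf}(\bFbar_p^{K(M,n)}, \bFbar_p)$, which computes $K(M, n)_p^\wedge$ by the $\bFbar_p$-cochain corollary to Theorem~\ref{thm:derivedstone}.

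The main obstacle is the synthetic Eilenberg--Moore step: while the pushout property is classical for $\bFbar_p$- or $\bZ_p^\un$-valued cochains of nilpotent spaces, promoting it to the complete filtered category requires checking that the cotangent-complex connectivity arguments of Mandell~\cite{mandell-padic} admit a refinement compatible with the even filtration, together with careful bookkeeping of exhaustion and completeness of the pushout filtrations at each stage of the Postnikov induction.
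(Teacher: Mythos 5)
Your approach takes a genuinely different route from the paper, and it contains one acknowledged gap and one serious unacknowledged gap.

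The paper does not run a Postnikov induction on $X$ at all. Instead it works globally: it passes through the Rees construction to realize $\FDhat(\bS_\syn^\un)_p^\wedge$ as modules over $\Rees(\bS_\syn^\un)$, descends along the map $\Rees(\bS_\syn^\un)\to\bS_\gr^\un$ via a \v{C}ech totalization (invoking~\cite[Thm.~2.30]{MNN17}, with some care because the synthetic unit is not compact), identifies the associated graded of the cochains as $\gr^\star\bS_\syn^{\un,X}\we\bZ_p^{\un,X}\otimes_{\bZ_p^\un}\bS_\gr^\un$, shows $\gr^0(\bS_\gr^{\un,n})$ is connective with $\pi_0\iso\bZ_p^\un$, and then uses Yuan's vanishing of the $p$-complete $\bE_\infty$-cotangent complex $\L^{\bE_\infty}_{\bZ_p^{\un,X}/\bZ_p^\un}$ together with Mandell's theorem to identify each term of the resulting cosimplicial diagram of mapping spaces with $X$. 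The obstruction-theoretic leg replaces both the Eilenberg--Moore step and the base case in your outline, and the totalization replaces your Postnikov induction.

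The unacknowledged gap is your ``adjunction-like property.'' The functor $\F^\star_\syn\widehat{\bS\bW}$ of Theorem~\ref{thm:nonconnective} is constructed by non-linear right left Kan extension: it preserves sifted colimits and finite totalizations, but it is not a left adjoint and comes with no universal property for mapping \emph{out} of its values. In the connective setting, $\bS\bW$ is corepresented by $\bS[t^{1/p^\infty}]_p^\wedge$ and hence has a right adjoint $(-)^\flat$; that is precisely what breaks nonconnectively---this is emphasized in the introduction, where it is noted that $\bS\bW(\bF_p^{K(\bF_p,1)})$ fails to be a flat deformation. So there is no formal reason that
$\Map_{\CAlg(\FDhat(\bS_\syn^\un)_p^\wedge)}(\F^\star_\syn\widehat{\bS\bW}(\bFbar_p^{K(M,n)}),\bS_\syn^\un)$
should collapse to $\Map_{\DAlg_{\bFbar_p}^\perf}(\bFbar_p^{K(M,n)},\bFbar_p)$; the paper obtains this kind of collapse only after descending to the associated graded and applying obstruction theory against the connectivity of $\gr^0(\bS_\gr^{\un,n})$. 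Without that, your base case does not go through.

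Your acknowledged gap---a synthetic Eilenberg--Moore theorem---is also real, but it is worth noting that even if you had it, the Postnikov tower of a finite type $p$-complete nilpotent space is infinite, so you would additionally need to establish that the comparison map behaves well in the inverse limit over Postnikov stages. The paper's descent-plus-obstruction argument handles $X$ all at once and never faces this convergence issue, which is one concrete advantage of its route.
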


Recall that Yuan proves that $X\mapsto\bS_p^{\un,X}$ is fully faithful when restricted to simply
connected $p$-complete finite spaces, i.e., those simply connected spaces which are built out of
finitely many $p$-complete spheres. The theorem says that one can extend the range of this theorem
at the cost of working in a more complicated $\infty$-category.

\begin{proof}
    The standard argument lets us reduce to proving that for all $X\in\Sscr_{p\ft}$, the natural
    map $X\rightarrow\Map_{\CAlg(\FD(\bS_\syn^\un)_p^\wedge)}(\bS_\syn^{\un,X},\bS_\syn^\un)$ is an equivalence.
    Using the Rees construction, we can realize $\FD(\bS_\syn^\un)_p^\wedge$, the $\infty$-category of $p$-complete synthetic 
    spectra over $\bS_\syn^\un$, as $\Mod_{\Rees(\bS_\syn^\un)}\Gr\D(\bS_p)_p^\wedge$,
    the $\infty$-category of modules over the
    Rees algebra of the unramified synthetic sphere spectrum in $p$-complete graded spectra.
    Similarly, the Rees algebra construction induces an equivalence
    $\Rees\colon\CAlg(\FD(\bS_\syn^\un)_p^\wedge)\we\CAlg(\Gr\D(\Rees(\bS_\syn^\un))_p^\wedge)$.
    The Rees algebra has an element $t$ in weight $-1$ and homological degree $0$ so that
    $$\bS_\gr^\un=\gr^\star\bS_\syn^\un\we\cofib(\bS_\syn^\un(-1)\xrightarrow{t}\bS_\syn^\un)\we\cofib(t).$$
    In particular, $\bS_\gr^\un$ is a dualizable object of $\FD(\bS_\syn^\un)$. Let
    $\bS_\gr^{\un,\bullet}$ be the \v{C}ech complex of $\Rees(\bS_\syn^\un)\rightarrow\bS_\gr^\un$
    in $p$-complete graded spectra. As the $\bS_\gr^\un$-complete objects correspond to the complete
    filtrations, we obtain an equivalence
    $$\CAlg(\FDhat(\bS_p^\un)_p^\wedge)\we\Tot(\CAlg(\Gr\D(\bS_\gr^{\un,\bullet})_p^\wedge))$$ using~\cite[Thm.~2.30]{MNN17}.\footnote{The cited theorem
    does not in fact apply to $p$-complete synthetic spectra as the unit is not compact and is
    moreover about module categories and not $\infty$-categories of $\bE_\infty$-algebras. However,
    the first problem
    can be fixed by working in all synthetic spectra and applying the argument to the ring
    spectrum $\bS_\gr^\un/p^m$ for $m$ sufficiently large so that the quotient admits the structure
    of an $\bE_1$-ring spectrum~\cite{burklund-moore}. The second problem is fixed by noting that
    the limit is in fact of symmetric monoidal stable $\infty$-categories and hence of
    $\infty$-operads. As the $\infty$-categories of commutative algebras are given by
    $\infty$-operad maps out of the $\bE_\infty$-operad, it follows that one obtains a limit diagram
    at the level of $\bE_\infty$-algebra $\infty$-categories as well.}
    We have
    $$\gr^\star\bS_\syn^{\un,X}\we\Rees(\bS_\syn^{\un,X})\otimes_{\Rees(\bS_\syn^{\un})}\bS_\gr^{\un}\we\bZ_p^{\un,X}\otimes_{\bZ_p^\un}\bS_\gr^{\un}$$
    where $\bZ_p^\un$ and $\bZ_p^{\un,X}$ are concentrated in weight $0$
    as $\bS_\gr^\un$ is perfect over $\bZ_p^\un$ in each weight.
    It follows that for any cosimplicial degree $n$ there is an equivalence
    $$\Rees(\bS_\syn^{\un,X})\otimes_{\Rees(\bS_\syn^\un)}\bS_\gr^{\un,n}\we\bZ_p^{\un,X}\otimes_{\bZ_p^\un}\bS_\gr^{\un,n},$$
    which depends on the choice of a $\bZ_p^\un$-algebra structure on $\bS_\gr^{\un,n}$.
    Thus, for each fixed $n$, we have 
    \begin{align*}
        \Map_{\CAlg(\Gr\D(\bS_\gr^{\un,n})_p^\wedge)}\left(\Rees(\bS_\syn^{\un,X})\otimes_{\Rees(\bS_\syn^\un)}\bS_\gr^{\un,n},\bS_\gr^{\un,n}\right)
        &\we\Map_{\CAlg(\Gr\D(\bZ_p)_p^\wedge)}(\bZ_p^{\un,X},\bS_\gr^{\un,n})\\
        &\we\Map_{\CAlg(\D(\bZ_p)_p^\wedge)}(\bZ_p^{\un,X},\gr^0(\bS_\gr^{\un,n})),
    \end{align*}
    where the last equivalence follows from the adjunction
    $$\ins^0\colon\D(\bZ_p)_p^\wedge\rightleftarrows\Gr\D(\bZ_p)_p^\wedge\colon\ev^0.$$
    Now, we claim that $\gr^0(\bS_\gr^{\un,n})$ is a connective $\bE_\infty$-ring spectrum with
    $\pi_0\iso\bZ_p^\un$.
    This is clear if $n=0$. We give the argument for $n=1$, the cases when $n>1$ being identical but
    notationally heavier. To compute $\gr^0(\bS_\gr^\un\otimes_{\Rees(\bS_\syn^\un)}\bS_\gr^\un)$ one
    can compute the geometric realization of the bar construction
    $\bS_\gr^\un\otimes\Rees(\bS_\syn^\un)^{\otimes\bullet}\otimes\bS_\gr^\un$.
    In each simplicial degree $m$, one uses the direct sum formula for tensor products of graded
    objects to compute
    $$\gr^0(\bS_\gr^\un\otimes\Rees(\bS_\syn^\un)^{\otimes
    m}\otimes\bS_\gr^\un)\we\bigoplus_{i+j+k_1+\cdots+k_m=0}\gr^i\bS_\syn\otimes\F^{k_1}\bS_\syn\otimes\cdots\otimes\F^{k_m}\bS_\syn\otimes\gr^j\bS_\syn,$$
    where all tensor products are over $\bS_p^\un$.
    If any of $i,j,k_1,\ldots,k_m$ is positive, then the resulting term is connected as
    $\F^k\bS_\syn$ is $k$-connective for all $k\geq 0$. So, the only contribution to $\pi_0$
    of the direct sum is when $i=j=k_1=\cdots=k_m=0$, in which case the tensor product is equivalent
    to $\bZ_p^\un\otimes_{\bS_p^\un}\bZ_p^\un$. This completes the proof of the claim.

    As the $p$-complete $\bE_\infty$-cotangent complex of $\bZ_p^{\un,X}$ vanishes
    by~\cite[Prop.~7.6]{yuan-integral}, obstruction
    theory implies that for each $n$ there is an equivalence
    $$\Map_{\bZ_p^\un}(\bZ_p^{\un,X},\gr^0(\bS_\gr^{\un,n}))\we\Map_{\bZ_p^\un}(\bZ_p^{\un,X},\bZ_p^{\un})\we\Map_{\bFbar_p}(\bFbar_p^X,\bFbar_p)\we
    X,$$
    where the deformation theory is as in the proof of ~\cite[Cor.~7.6.1]{yuan-integral}
    and the final equivalence is thanks to the main theorem of~\cite{mandell-padic}.
    This shows that the natural map from $X$ to each term in the cosimplicial diagram of mapping spaces is
    an equivalence; thus the cosimplicial diagram is equivalent to the constant diagram on $X$, so
    its limit is again $X$.
\end{proof}

\begin{remark}
    Little of the argument above is unique to the sphere spectrum. It would work for
    $X\mapsto(\F^\star R)^X$ in
    $\mfrak$-complete filtered $\F^\star R$-module spectra where $\F^\star R$ is a complete filtered
    $\bE_\infty$-ring spectrum with for which $\gr^0R$
    is a (discrete) complete regular local ring with maximal ideal $\mfrak$ and algebraically closed residue field of characteristic
    $p>0$, where each $\gr^iR$ is a perfect $\gr^0R$-module spectrum, where $\gr^iR\we 0$ for
    $i<0$, and where $\F^i R$ is connective for each $i\in\bZ$.
\end{remark}

\begin{remark}
    \label{rem:tcs}
    Yuan asks in~\cite[Que.~7.13]{yuan-integral} what the space of $\bE_\infty$-maps $\bS_p^{\B
    C_p}\rightarrow\bS_p$ is, the hope being that it might be still usable to recover $\B C_p$.
    However, replacing $\bS_p$ with $\bS_p^\un$, one sees that there are at least two distinct
    homotopy classes of $\bE_\infty$-maps $\bS_p^{\un,\B C_p}\rightarrow\bS_p^\un$, one being
    given by the canonical map obtained by inclusion of fixed points and the other being
    obtained by $\bS_p^{\un,\B C_p}\we\bS_p^{\un,\h C_p}\rightarrow\bS_p^{\un,\t
    C_p}\we\bS_p^\un$, where we give $\bS_p^\un$ the trivial action. That these are distinct
    follows for example from the analysis of $\TC$ of the sphere spectrum
    (see~\cite{blumberg-mandell-homotopy}).
\end{remark}

\section{Spherical $\lambda$-rings}

We conclude the paper with a construction of new $\bE_\infty$-ring spectra.

\begin{definition}[Perfect commutative rings]
    A commutative ring $R$ is perfect if $R/p$, the derived reduction mod $p$, is a perfect $\bF_p$-algebra for all primes $p$.
\end{definition}

\begin{definition}[Spherical lifts of perfect commutative rings]\label{def:spherical_lambda}
    Suppose that $R$ is a perfect commutative ring. Let $\bS_R$ be the pullback
    \begin{equation}
        \label{eq:spherical_lambda}
        \begin{gathered}\xymatrix{
        \bS_R\ar[r]\ar[d]&R\ar[r]\ar[d]&R_\bQ\ar[d]\\
        \prod_p\bS\bW(R/p)\ar[r]^{\pi_0}&\prod_pR_p^\wedge\ar[r]&R_\bA
    }\end{gathered}\end{equation}
    of either the left commutative square or equivalently the outer square.
    This construction defines a functor $\CAlg_{\bZ}^{\perf}\rightarrow\CAlg(\Sp)$.
\end{definition}

\begin{proposition}
    \label{prop:spherical_binomial}
    The functor $\bS_{(-)}\colon\CAlg_{\bZ}^\perf\rightarrow\CAlg(\Sp)$ is fully faithful and admits
    the structure of a symmetric monoidal functor. If
    $R\in\CAlg_{\bZ}^\perf$, then $\bS_R$ has the following properties:
    \begin{enumerate}
        \item[{\em (a)}] $\pi_0\bS_R\iso R$;
        \item[{\em (b)}] $\bS_R$ is flat over $\bS$:
            $\pi_i\bS\otimes_{\bZ}\pi_0\bS_R\iso\pi_i\bS_R$;
        \item[{\em (c)}] the outer commutative square in~\eqref{eq:spherical_lambda} is equivalent
            to the arithmetic fracture square of $\bS_R$.
    \end{enumerate}
\end{proposition}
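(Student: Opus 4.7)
The plan is to first compute the homotopy groups of $\bS_R$ using the defining pullback square, then identify that square with the arithmetic fracture square, and finally to deduce fully faithfulness and the symmetric monoidal structure. A key preliminary is that any perfect commutative ring $R$ is automatically $p$-torsion free for every prime $p$: the derived reduction $R\otimes_{\bZ}^{\L}\bF_p$ has $\pi_1=R[p]$, which must vanish because perfectness forces it to be discrete. Hence $R_p^\wedge$ is a classical $p$-torsion free $p$-complete ring whose reduction modulo $p$ is the perfect $\bF_p$-algebra $R/p$, so $R_p^\wedge\iso\bW(R/p)\iso\pi_0\bS\bW(R/p)$, identifying the left vertical map of the defining diagram with the $\pi_0$-projection.

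Given this, I would compute $\pi_*\bS_R$ from the fiber sequence $\bS_R\to R_\bQ\oplus\prod_p\bS\bW(R/p)\to R_\bA$. Since $R_\bQ$ and $R_\bA$ are discrete, only the middle term contributes in positive degrees. For $i\geq 1$, $\pi_i\bS$ is a finite abelian group, so by $p$-complete flatness of $\bS\bW(R/p)$ (Proposition~\ref{prop:homology_of_sw}(a)) one has $\pi_i\bS\bW(R/p)\iso(\pi_i\bS)_p\otimes_{\bZ_p}\bW(R/p)$, which vanishes for all but finitely many $p$. Combining, $\pi_i\bS_R\iso\bigoplus_p(\pi_i\bS)_p\otimes\bW(R/p)\iso\pi_i\bS\otimes_\bZ R$ for $i\geq 1$, giving (b); at $\pi_0$, the long exact sequence reduces to the classical arithmetic fracture square for the perfect discrete ring $R$, yielding $\pi_0\bS_R\iso R$, which is (a).

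For (c), I would use that $p$-completion and rationalization preserve finite limits. Applying $p$-completion to the defining square: $R_\bQ$ and $R_\bA$ are $\bQ$-modules hence $p$-completely zero, and $\bS\bW(R/q)$ for $q\neq p$ has $p$ acting invertibly on $\pi_0=\bW(R/q)$ (where $q\neq p$ is a unit), hence $p$-completes to zero; this leaves $(\bS_R)_p^\wedge\iso\bS\bW(R/p)$. Rationally, part (b) shows $\pi_i\bS_R$ is torsion for $i\geq 1$, so $\bS_R\otimes\bQ\iso R_\bQ$; the same reasoning gives $(\prod_p\bS\bW(R/p))\otimes\bQ\iso R_\bA$. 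This identifies the defining outer square with the arithmetic fracture square of $\bS_R$, proving (c).

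Finally, for fully faithfulness I would apply the universal property of $\bS_S$ as a pullback, which expresses $\Map_{\CAlg(\Sp)}(\bS_R,\bS_S)$ as a pullback of mapping spaces; using (c), the adjunction of Theorem~\ref{thm:intromain}, fully faithfulness of $\bS\bW$ (Corollary~\ref{cor:sw_fully_faithful}), and the discreteness of rational and perfect $\bF_p$-algebras, each vertex becomes a discrete set of ring maps, and the resulting discrete pullback recovers $\Hom_{\CAlg_\bZ}(R,S)$ via the arithmetic fracture for $S$. The symmetric monoidal structure can be built by constructing a natural comparison map $\bS_R\otimes_\bS\bS_S\to\bS_{R\otimes_\bZ S}$ through the universal property of the target's defining pullback, verifying equivalence via the K\"unneth formula implied by (b). The main obstacle is the symmetric monoidal check, since infinite products do not commute with tensor products of spectra in general; the saving feature is that only finitely many primes contribute to each homotopy group, rescuing the argument prime-by-prime.
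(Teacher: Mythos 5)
Your computations in (a)--(c) and your fully faithfulness argument follow essentially the same route as the paper's: the paper likewise reads off $\pi_*\bS_R$ from the pullback (using that $\pi_i\bS$ is finitely generated torsion for $i\geq 1$ and the $p$-complete flatness of $\bS\bW(R/p)$), identifies the mapping-space pullback with the arithmetic-fracture pullback computing $\Hom_{\bZ}(R,T)$ via Corollary~\ref{cor:sw_fully_faithful} and $p$-completeness, and deduces (c) from (b). Your preliminary that perfectness forces $R$ to be $p$-torsion free, and the identification $R_p^\wedge\iso\bW(R/p)\iso\pi_0\bS\bW(R/p)$, match the paper's footnoted observation.

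The one place where your sketch has a genuine gap is the symmetric monoidal structure. You propose to ``build'' it by constructing a natural comparison map $\bS_R\otimes_{\bS}\bS_S\to\bS_{R\otimes_{\bZ}S}$ and checking it is an equivalence; but a natural family of isomorphisms on binary tensor products is not by itself a symmetric monoidal structure --- the coherences are not addressed, and that is exactly what is at stake. The paper's key observation, which you omit, is that both the symmetric monoidal structure on $\CAlg_{\bZ}^\perf$ and that on $\CAlg(\Sp)$ are \emph{cocartesian}. Consequently \emph{any} functor $\CAlg_{\bZ}^\perf\to\CAlg(\Sp)$ is canonically lax symmetric monoidal, with structure map the coproduct comparison $\bS_R\sqcup\bS_S\to\bS_{R\sqcup S}$, and one only needs to verify that this canonical map is an equivalence. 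The paper does this cleanly using (b): both sides are flat $\bE_\infty$-rings over $\bS$, and the map is an isomorphism on $\pi_0$, hence an equivalence. This also resolves your worry about infinite products not commuting with $\otimes_{\bS}$: since one checks an equivalence between two $\bS$-flat rings by comparing $\pi_0$ alone, the ``prime-by-prime'' bookkeeping you anticipated is unnecessary.
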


\begin{proof}
    Fully faithfulness follows by using that for $R$ and $T$ in $\CAlg_{\bZ}^\perf$ we have a pullback square
    $$\xymatrix{
        \Map_{\bS}(\bS_R,\bS_T)\ar[r]\ar[d]&\Map_{\bS}(\bS_R,T_\bQ)\ar[d]\\
        \prod_p\Map_{\bS}(\bS_R,\bS\bW(T/p))\ar[r]&\Map_\bS(\bS_R,T_\bA),
    }$$
    which can be identified with the corresponding pullback square computing $\Map_{\bZ}(R,T)$,
    using the fact that
    $$\Map_{\bS}(\bS_R,\bS\bW(T/p))\we\Map_{\bS}(\bS\bW(R/p),\bS\bW(T/p))\we\Map_{\bF_p}(R/p,T/p)\we\Map_{\bZ}(R_p^\wedge,T_p^\wedge),$$
    where the first equivalence follows by $p$-completeness, the second
    by Corollary~\ref{cor:sw_fully_faithful}, and the third
    by perfectness (see Proposition~\ref{prop:fully_faithful}).

    For symmetric monoidality, note that the symmetric monoidal structure on both sides is the
    cocartesian one. Thus, it is enough to check that $\bS_{(-)}$ preserves coproducts. If
    $R,T\in\CAlg_{\bZ}^\perf$, then $R\otimes_{\bZ}T$ is again perfect.\footnote{Note that if
    $R\in\CAlg_{\bZ}^\perf$, then it is torsion free.} The natural map
    $\bS_R\otimes_{\bS}\bS_T\rightarrow\bS_{R\otimes_{\bZ}T}$ is a map of flat $\bE_\infty$-rings
    over $\bS$ and it induces an isomorphism on $\pi_0$ by construction. Thus, it is an equivalence,
    as desired.

    Property (a) follows by computing the Mayer--Vietoris sequence for the left pullback square
    in~\eqref{eq:spherical_lambda}. Property (b) follows because, for $i\geq 1$,
    $$\pi_i\bS_R\iso\pi_i\prod_p\bS\bW(R/p)\iso\prod_p
    \pi_i\bS\otimes_{\bZ}\bW(R/p)\iso\pi_i\bS\otimes_\bZ\prod_p\bW(R/p)\iso\pi_i\bS\otimes_{\bZ} R,$$
    where we use that each stable homotopy group is finitely generated torsion.
    Property (c) follows because $(\bS_R)_\bQ\we R_\bQ$ by (b).
\end{proof}

\begin{example}
    The monoid ring $\bS[\bQ_{\geq 0}]$ arises as $\bS_{\bZ[\bQ_{\geq 0}]}$.
\end{example}

\begin{example}
    Let $\bZ\binom{x}{\bullet}\subseteq\bQ[x]$ denote the ring of integer-valued polynomials.
    By Newton, these are the finite sums $\sum_i a_i\binom{x}{i}$, where $\binom{x}{i}$ is the
    Newton polynomial $\tfrac{x(x-1)\cdots(x-n+1)}{n!}$. This is a binomial ring and hence each
    $p$-completion is perfect, so it is a perfect commutative ring. Write $\bS\binom{x}{\bullet}$
    for its spherical lift $\bS_{\bZ\binom{x}{\bullet}}$. Like the monoid ring $\bS[t]=\bS[\bN]$,
    this is a flat $\bE_\infty$-ring whose rationalization is a polynomial ring in one generator
    over $\bQ$, however it has a very different flavor. For example, one can check using the
    $\bE_\infty$-Frobenius with respect to a prime $p$ that there is no map $\bS[t]\rightarrow\bS\binom{x}{\bullet}$ sending $t$
    to $x$: indeed Frobenius sends $t$ to $t^p$ but $x$ to $x$.
\end{example}

\begin{remark}[The Hilbert additive group]
    There is a commutative ring scheme structure on $\bH=\Spec\bZ{\binom{x}{\bullet}}$, which To\"en calls the
    Hilbert additive group in~\cite{toen-schematisation} (and is related to the filtered circle as
    constructed in~\cite{moulinos-robalo-toen}). This is encoded in a commutative coring structure on $\bZ\binom{x}{\bullet}$. By
    symmetric monoidality of $\bS_{(-)}\colon\CAlg_{\bZ}^\perf\rightarrow\CAlg(\Sp)$, there is a
    co-$\bE_\infty$-ring structure on $\bS\binom{x}{\bullet}$ as well, so that mapping out of
    $\bS\binom{x}{\bullet}$ produces a functor with values in $\bE_\infty$-rings, which is a
    spherical lift of $\bH$.
\end{remark}

\begin{remark}[Spherical lifts of perfect $\lambda$-rings]
    Suppose that $R\in\CAlg_R^\lambda$ is a perfect $\lambda$-ring, meaning that each $\psi^p$ is an equivalence.
    Then, it is in particular perfect as a commutative ring and we can form $\bS_R$ as above. In
    this case the Adams operations canonically lift to $\bS_R$ as they act on each vertex in the
    pullback squares in~\eqref{eq:spherical_lambda}.
\end{remark}

\begin{conjecture}
    \label{conj:synthetic_binomial}
    By making a synthetic analogue of Definition~\ref{def:spherical_lambda} and studying an
    $\infty$-category of synthetic $\lambda$-rings with appropriate trivializations of all Frobenii,
    we expect that Theorem~\ref{thm:synthetic} can be refined to give integral models of finite type
    nilpotent spaces.
\end{conjecture}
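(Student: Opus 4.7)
The plan is to build a synthetic analogue of the binomial derived rings of Section~\ref{sec:integral_spaces} and then reproduce the arithmetic fracture construction of Definition~\ref{def:spherical_lambda} in the filtered/synthetic setting, feeding in Theorem~\ref{thm:synthetic} at each prime and Theorem~\ref{thm:sullivan} rationally. Concretely, for each prime $p$ I would introduce an $\infty$-category of $p$-complete synthetic $\delta$-rings with trivialized Frobenius by passing to $\B S^1$-fixed points on perfect derived $\delta$-algebras in $\FDhat(\bS_\syn^{\un})_p^\wedge$, following the template of Section~\ref{sub:pboolean} and Proposition~\ref{prop:fully_faithful}. An integral $\infty$-category $\DAlg_\syn^{\lambda,\psi=1}$ would then be defined by a Lemma~\ref{lem:binomial_fracture}-style pullback of $\prod_p$ of these $p$-local synthetic categories with rational derived $\lambda$-rings over their common adelic rationalization.

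Next, for a $\bZ$-finite-type nilpotent space $X\in\Sscr_{\bZ\ft}$, I would define the synthetic cochains $\bS_\syn^X\in\DAlg_\syn^{\lambda,\psi=1}$ as the pullback of the data $(\prod_p\bS_\syn^{\un,X},\bQ^X)$ over the adelic rationalization, exactly as in diagram~\eqref{eq:spherical_lambda} but with each $\bS\bW(R/p)$ replaced by the $p$-complete filtered $\bS_\syn^{\un,X}$ and with $R_\bQ$ replaced by the rational cdga $\bQ^X$ of Sullivan. The construction is manifestly functorial in $X$ and gives a candidate functor $\Sscr_{\bZ\ft}^\op\rightarrow\DAlg_\syn^{\lambda,\psi=1}$.

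Full faithfulness would then follow the pattern of the proof of Theorem~\ref{thm:integral_cochain}: reduce by pulling $Y$ out to showing that for $X\in\Sscr_{\bZ\ft}$ the natural map $X\to\Map_{\DAlg_\syn^{\lambda,\psi=1}}(\bS_\syn^X,\bS_\syn)$ is an equivalence; then rewrite the target mapping space via the fracture pullback as the pullback of $\prod_p\Map(\bS_\syn^{\un,X},\bS_\syn^\un)\we\prod_p X_p^\wedge$ (by Theorem~\ref{thm:synthetic}) with $\Map_\bQ(\bQ^X,\bQ)\we X_\bQ$ (by Theorem~\ref{thm:sullivan}) over $X_\bA$; and conclude by Sullivan's arithmetic square for $X$. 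The passage between the $p$-local synthetic mapping space and the $p$-complete space $X_p^\wedge$ is exactly Theorem~\ref{thm:synthetic}, and the comparison of their rationalizations to $X_\bA$ should proceed by the Rees/graded-piece analysis already used in the proof of that theorem.

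The main obstacle will be defining $\DAlg_\syn^{\lambda,\psi=1}$ cleanly, and in particular making the $\B\bZ_{>0}$-action and the trivializations of all $\psi^n$ interact correctly with the even filtration: the even filtration is constructed $p$-locally, so one needs a coherent notion of $\psi^p$ on a single \emph{integral} synthetic object, or else one must verify that the pullback of $p$-local synthetic fixed-point categories genuinely computes $\B\bZ_{>0}$-fixed points of a reasonable integral synthetic $\lambda$-ring category. A secondary difficulty is ensuring that $\bQ^X$ and $\bS_\syn^{\un,X}$ glue over a genuine adelic object at the synthetic level, which likely requires showing that rationalization of the $p$-adic synthetic cochains agrees with the constant filtration on $\bA\otimes_\bQ\bQ^X$; this should be accessible using the graded-piece argument of the proof of Theorem~\ref{thm:synthetic} together with Corollary~\ref{cor:swcochains}.
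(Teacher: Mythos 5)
This statement is an unproved \emph{conjecture} in the paper, so there is no ``paper's proof'' to compare against; what you have written is a genuine blind attempt to prove a claim the author only states as an expectation. With that caveat, your overall strategy --- fracture square in the style of Definition~\ref{def:spherical_lambda}, synthetic $\lambda$-rings with trivialized Adams operations, Theorem~\ref{thm:synthetic} supplying the $p$-local input and Theorem~\ref{thm:sullivan} the rational input --- is precisely the shape the conjecture suggests, and you have correctly identified the two hardest points (coherence of $\psi^p$ across primes in the absence of an integral even filtration, and gluing the rational and synthetic pieces over the ad\`eles). Those are genuine mathematical obstacles, not bookkeeping, so this should be read as a plan rather than a proof.

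One concrete gap in the plan as written: you propose to feed $\bS_\syn^{\un,X}$ into the fracture square and extract $X_p^\wedge$ from $\Map(\bS_\syn^{\un,X},\bS_\syn^\un)$. But $\bS_\syn^{\un,X}$ is an object over $\bS_\syn^\un$, i.e.\ over the spherical Witt vectors of $\bFbar_p$, whose degree-zero layer is $\bZ_p^{\un,X}$ rather than $\bZ_p^X$. The arithmetic fracture square in Definition~\ref{def:spherical_lambda} and in Lemma~\ref{lem:binomial_fracture} glues against the \emph{descended} ($\varphi=1$, respectively $\psi=1$) objects over $\bZ_p$, not against the unramified lifts, precisely so that the rationalizations match $\bQ^X$ and the ad\`elic corner is $\bA^X$. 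To make your plan work you would first need a synthetic analogue of Lemma~\ref{lem:unwind} and Proposition~\ref{prop:fully_faithful}(a): show that the functor
$\DAlg^{\text{synth},\varphi=1}_{p}\to\DAlg^{\text{synth},\perf}_{\bS_\syn^\un}$
is fully faithful and that $\bS_\syn^{\un,X}$ canonically descends to a $\varphi=1$ object $\bS_\syn^{X}$ living over a $\bZ_p$-linear synthetic base, and then run Theorem~\ref{thm:synthetic}'s argument for that descended object. You allude to this with the $\B S^1$-fixed-point construction in your first paragraph, but then the fracture and the mapping-space computation in your third paragraph quietly switch back to the unramified objects. Without the descent step, the rational corner of your square is not literally $\bQ^X$ over $\bA^X$ and the Sullivan arithmetic square does not apply as stated.

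Two smaller points worth flagging. First, in the fracture square of Definition~\ref{def:spherical_lambda} the map out of $\bS\bW(R/p)$ is $\pi_0$, not rationalization, so the synthetic analogue should pass through the underlying (or graded-degree-zero) layer before rationalizing; ``rationalizing the synthetic cochains'' directly is both ill-behaved (a $p$-complete filtration rationalizes to zero) and not what the classical construction does. Second, Theorem~\ref{thm:synthetic} is stated for $\Sscr_{p\ft}$, so when you write $\prod_p\Map(\bS_\syn^{\un,X},\bS_\syn^\un)\we\prod_p X_p^\wedge$ you are implicitly using that each $X_p^\wedge$ lies in $\Sscr_{p\ft}$ for $X\in\Sscr_{\bZ\ft}$; this is fine but should be said, since the finite-type hypothesis is what makes the $p$-completions land in the range of the theorem.

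Finally, a cautionary note on your own ``main obstacle'': the issue of producing a single $\B\bZ_{>0}$-action on an integral synthetic category is more than a technicality, because the two prime-local even filtrations on an integral object are genuinely independent of each other --- there is no known integral refinement of $\bS_\syn$. So the ``pullback of $p$-local synthetic fixed-point categories'' option is almost certainly the right framework, and the content of the conjecture is really that this pullback admits enough structure to support the fully-faithfulness argument. You have named the right problem; the proposal does not yet show how to solve it.
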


\small
\bibliographystyle{amsplain}
\bibliography{transmutation}

\medskip
\noindent
\textsc{Department of Mathematics, Northwestern University}\\
{\ttfamily antieau@northwestern.edu}

\end{document}